\documentclass[reqno, 11pt]{amsart}

\usepackage[english]{babel}
\usepackage{amsmath}
\usepackage{amssymb}
\usepackage{enumerate}
\usepackage{ifthen}
\usepackage{bbm}
\usepackage{color}
\usepackage{graphicx}
\usepackage{geometry}
\geometry{a4paper,top=3cm,bottom=3cm,left=1.5cm,right=1.5cm,
heightrounded,bindingoffset=5mm}
\geometry{a4paper,left=1.5cm,right=1.5cm}
\provideboolean{shownotes}
\setboolean{shownotes}{true}
\usepackage{color}
\usepackage{hyperref}
\usepackage{setspace}
\newtheorem{theorem}{Theorem}[section]
\newtheorem{proposition}[theorem]{Proposition}
\newtheorem{lemma}[theorem]{Lemma}
\newtheorem{corollary}[theorem]{Corollary}
\newtheorem{definition}[theorem]{Definition}

\theoremstyle{remark}
\newtheorem{remark}[theorem]{Remark}

\newcommand{\e}{\varepsilon}
\newcommand{\R}{\mathbb{R}}
\newcommand{\T}{\mathbb{T}}
\newcommand{\N}{\mathbb{N}}

\newcommand{\dive}{\mathop{\mathrm {div}}}

\newcommand{\dx}{\partial_x}
\newcommand{\dt}{\partial_t}
\newcommand{\dxx}{\partial^2_{xx}}
\newcommand{\weakto}{\rightharpoonup}
\newcommand{\weaktos}{\stackrel{*}{\rightharpoonup}}

\newcommand{\rrho}{\sqrt{\rho}}

\newcommand{\we}{w_{\e}}
\newcommand{\bede}{\beta_{\sigma}(u_{\e})}
\newcommand{\bedep}{\beta^{'}_{\sigma}(u_{\e})}
\newcommand{\bedes}{\beta^{''}_{\sigma}(u_{\e})}
\newcommand{\bed}{\beta_{\sigma}(u)}
\newcommand{\bedp}{\beta^{'}_{\sigma}(u)}

\newcommand{\re}{\rho_{\e}}
\newcommand{\ue}{u_{\e}}
\newcommand{\me}{\mu_{\e}}
\newcommand{\ke}{k_{\e}}

\numberwithin{equation}{section}

\subjclass{Primary: 35Q35; Secondary: 76N10, 35D30.}
\keywords{Compressible fluid, Navier-Stokes-Korteweg equations, Weak solutions.}

\begin{document}

\title[\empty]{Global weak solutions of the Navier-Stokes-Korteweg Equations in one dimension}

\author[P. Antonelli]{Paolo Antonelli}
\address[Paolo Antonelli]{\newline
Gran Sasso Science Institute\\ viale Francesco Crispi, 7, 67100\\L'Aquila \\Italy}
\email[]{\href{paolo.antonelli@gssi.it}{paolo.antonelli@gssi.it}}
\author[D. Bresch]{Didier Bresch}
\address[Didier Bresch]{\newline
Universit\'e Savoie Mont Blanc\\ UMR5127 CNRS\\ Laboratoire de Math\'ematiques\
\\ 73376 Le Bourget-du-Lac\\ France}
\email[]{\href{didier.bresch@univ-smb.fr}{didier.bresch@univ-smb.fr}}
\author[S. Spirito]{Stefano Spirito}
\address[Stefano Spirito]{\newline
DISIM - Dipartimento di Ingegneria e Scienze dell’Informazione e Matematica\\ Universit\`a degli Studi dell’Aquila\\ Via Vetoio\\ 67100 L’Aquila\\ Italy}
\email[]{\href{stefano.spirito@univaq.it}{stefano.spirito@univaq.it}}

\begin{abstract}
We prove the global existence of weak solutions of the one-dimensional Navier-Stokes-Korteweg (NSK) equations when the viscosity and the capillarity coefficients are power functions of the density, which may be zero on a set with positive measure. The proofs are based on a truncation argument combined with the Energy estimate and BD Entropy. Notably, we do not require any upper bound on the exponent of the power of the viscosity coefficient. In particular, we are able to consider very degenerate viscosity coefficient and to substantially improve previous results. 
\end{abstract}

\maketitle

\section{Introduction}
Let $T>0$, $\gamma>1$, and $\T$ be the one dimensional flat torus. We consider the Navier-Stokes-Korteweg equations in $(0,T)\times\T$:
\begin{align}
&\dt\rho+\dx(\rho\,u)=0,\, \rho\geq 0,\nonumber\\
&\dt(\rho\,u)+\dx(\rho\,u\,u)-\dx(\mu(\rho)\dx\,u)+\dx \rho^{\gamma}=\rho\dx\left(\dx(k(\rho)\dx\rho)-\frac{k'(\rho)}{2}|\dx\rho|^2\right),\label{eq:nsk}\tag{NSK}
\end{align}
with initial conditions 
\begin{equation}\label{eq:id}\tag{ID}
\begin{aligned}
\rho|_{t=0}&=\rho_0\mbox{ on }\{t=0\}\times\T,\\
\rho\,u|_{t=0}&=\rho_0\,u_0\mbox{ on }\{t=0\}\times\T,
\end{aligned}
\end{equation}
and periodic boundary conditions. The system \eqref{eq:nsk} models the dynamic of a compressible fluid subject to viscous and capillarity effects. In particular, $\rho:(0,T)\times\T\mapsto \R^{+}$ denotes the density and $u:(0,T)\times\T\mapsto \R$ denotes the velocity. 
The quantities 
\begin{equation*}
\begin{aligned}
&\mathcal{S}(\dx u):=\mu(\rho)\dx u,\quad &\mathcal{K}(\rho,\nabla\rho):=\rho\dx(k(\rho)\dx\rho)-\frac{1}{2}\left(k(\rho)+\rho\,k'(\rho)\right)|\dx\rho|^2.
\end{aligned}
\end{equation*}
are the viscous stress term and the Korteweg term, respectively. 
Note that
\begin{equation*}
\dx\mathcal{K}(\rho,\dx\rho)=\rho\dx\left(\dx(k(\rho)\dx\rho)-\frac{k'(\rho)}{2}|\dx\rho|^2\right),
\end{equation*}
and this form of $\mathcal{K}$ has been derived in \cite{DS}.
The functions $\mu$, $k$, are the viscosity coefficient and the capillarity coefficient, respectively. In particular, $\mu(\cdot):[0,\infty)\mapsto[0,\infty)$ is smooth, monotone increasing and possibly such that $\mu(0)=0$, and the function $k(\rho):(0,\infty)\mapsto(0,\infty)$ is smooth. The equations \eqref{eq:nsk} arise in several physical applications, such as water waves, quantum hydrodynamic, shallow water equations, and diffuse interface models. 
In most of the cases the coefficients $\mu$, $k$, are polynomial functions of the density and this is also the case we are interested. In particular, in the present note we assume that 
\begin{equation}\label{eq:def1}
\begin{aligned}
&\mu(\rho)=\rho^{\alpha}\quad\alpha>0,&
&k(\rho)=\rho^{\beta}\quad\beta\in\R.
\end{aligned}
\end{equation}
For the sake of the present introduction, it is convenient to continue considering general coefficients.\\

\noindent The system \eqref{eq:nsk} admits a classical formal energy estimate, which is
\begin{equation}\label{eq:energy}
\frac{d}{dt}\mathcal{E}(t)+\int\mu(\rho)|\dx\,u|^2\,dx=0,
\end{equation}
where
\begin{equation*}
\mathcal{E}(t):=\int\rho\frac{|u|^2}{2}+\frac{\rho^{\gamma}}{\gamma-1}+k(\rho)\frac{|\dx\rho|^2}{2}\,dx.
\end{equation*}
In addition to \eqref{eq:energy}, a crucial tool in the analysis of this type of system is the so-called BD Entropy, which was first discovered in \cite{BD} for the multidimensional compressible Navier-Stokes equations with $\mu(\rho)=\rho$, $\lambda(\rho)=0$ and \cite{BD1} with $\lambda(\rho) = 2(\mu'(\rho)\rho-\mu(\rho))$, see also \cite{Sleu} for the one-dimensional case. Let $\phi(\rho)$ be such that $\rho\phi'(\rho)=\mu'(\rho)$, then formally
\begin{equation}\label{eq:bdgeneral}
\frac{d}{dt}\mathcal{F}(t)+\gamma\int\mu'(\rho)\rho^{\gamma-2}|\dx\rho|^2\,dx+\mathcal{J}(\rho)=0,
\end{equation}
where, denoting $w  = u + \alpha \rho^{\alpha-2} \partial_x \rho$, we define
\begin{equation*}
\begin{aligned}
\mathcal{F}(t)&:=\int\rho\frac{|w|^2}{2}+f(\rho)+k(\rho)\frac{|\dx\rho|^2}{2}\,dx \\
\mathcal{J}(\rho)&:=\int\dx\mu(\rho)\dx\left(\dx(k(\rho)\dx\rho)-\frac{k'(\rho)}{2}|\dx\rho|^2\right)\,dx.
\end{aligned}
\end{equation*}
Since $\mu(\cdot)$ is increasing, it always holds that
\begin{equation*}
\int\mu'(\rho)\rho^{\gamma-2}|\dx\rho|^2\geq 0.
\end{equation*}
Thus, in order to control $\mathcal{F}(t)$, a reasonable requirement on $\mu$ and $k$ is that
\begin{equation}\label{eq:bdterm}
\mathcal{J}(\rho):=\int\dx\mu(\rho)\dx\left(\dx(k(\rho)\dx\rho)-\frac{k'(\rho)}{2}|\dx\rho|^2\right)\,dx\geq 0.
\end{equation}
Following \cite{GLF2012} we introduce the following definition.
\begin{definition}[Strong Coercivity Condition]
The functions $\mu$ and $k$ satisfy the {\em Strong Coercivity Condition} if for any positive $\rho\in H^{2}(\T)$ bounded away from zero,
\begin{equation}\label{eq:sc}\tag{SC}
\mathcal{J}(\rho)\geq 0.
\end{equation}
\end{definition}
In particular, in \cite[Proposition 2.3]{GLF2012} the authors characterize the condition \eqref{eq:sc} when $\mu$ and $k$ satisfy \eqref{eq:def1} by proving that the condition \eqref{eq:sc} holds if and only if 
\begin{equation}\label{eq:abrange}
2\alpha-4\leq \beta\leq 2\alpha-1.
\end{equation}
Moreover, in \cite[Theorem 1.1]{GLF2012} the authors prove existence of finite energy weak solutions for certain non-linear coefficients $\mu$ and $k$. In particular, when the $\mu$ and $k$ are as in \eqref{eq:def1}, the global existence of finite energy weak solutions proved in \cite{GLF2012} is for $\alpha$ and $\beta$ such that \eqref{eq:abrange} holds and one of the following two conditions is satisfied:
\begin{align}
&\alpha<\frac{1}{2}\mbox{ or }\beta<-2,\label{eq:cond1}\\
&\alpha>\frac{2}{3},\,\,2\alpha-3<\beta\leq -1.\label{eq:cond2}
\end{align}
Concerning \eqref{eq:cond1}, we note that \eqref{eq:bdgeneral} implies that the density remains bounded away from zero by a constant depending from the initial data. Thus,  if the initial data are smooth enough and the initial density is bounded away from zero a standard higher-order energy estimates argument gives actually the existence of global in time smooth solutions, see Appendix \ref{teo:appex}. On the other hand, if condition \eqref{eq:cond2} holds, no control on the vacuum set seems to be available, and then \eqref{eq:nsk}-\eqref{eq:id} is satisfied in the weak sense. Note that condition \eqref{eq:cond2} implies that $\frac{2}{3}<\alpha\leq 1$. The aim of this note is to substantially improve the range \eqref{eq:cond2}, in particular with respect to the upped bound of $\alpha$. This is the first result in this direction for the Navier-Stokes-Korteweg equations. Precisely, we prove the following result. We refer to Section \ref{sec:weaksolution} for the definition of weak solutions. 
\begin{theorem}\label{teo:main}
Let $\mu$ and $k$ satisfies \eqref{eq:def1} and $p(\rho):=\rho^{\gamma}$ with $2\gamma>\alpha$. Assume that $\rho_0$ and $u_0$ are such that
\begin{equation}\label{eq:idreal}
\begin{aligned}
&\mathcal{E}(0)<\infty,& &\mathcal{F}(0)<\infty.
\end{aligned}
\end{equation}
Let $\alpha$ and $\beta$ such that \eqref{eq:abrange} is satisfied. Then, if
\begin{equation}\label{eq:condnew}
\begin{aligned}
&2\alpha-3\leq\beta<2\alpha-1,\\
&\alpha>\frac{1}{2},\quad \beta>-2,
\end{aligned}
\end{equation}
there exists a least a finite energy global in time weak solutions of \eqref{eq:nsk}-\eqref{eq:id}.
\end{theorem}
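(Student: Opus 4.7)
The plan is to follow the classical two-step approximation strategy for compressible Navier--Stokes systems with degenerate viscosity: first construct smooth, strictly positive solutions of a regularized version of \eqref{eq:nsk}, then derive a priori bounds from the energy identity \eqref{eq:energy} and the BD entropy \eqref{eq:bdgeneral} that are uniform in the regularization parameter, and finally pass to the limit using a velocity truncation to handle the convective nonlinearity on the vacuum set. The novelty compared with the range \eqref{eq:cond2}, which forces $\alpha\le 1$, is that we allow arbitrarily large $\alpha$ by fully exploiting the combined Sobolev control on powers of the density delivered by the energy and the BD entropy under \eqref{eq:condnew}.

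At the approximation level, for each $\e>0$ I would add a linear viscosity $\e\dxx\ue$ in the momentum equation and choose smooth initial data $\rho_0^\e\ge\e$ and $u_0^\e$ whose energy and BD entropy converge to those of $(\rho_0,u_0)$. The regularized system falls in the non-degenerate regime treated in Appendix \ref{teo:appex} and admits a global, strictly positive, smooth solution $(\re,\ue)$. Testing against $\ue$ gives the energy estimate, yielding uniformly in $\e$ the bounds $\sqrt{\re}\,\ue\in L^\infty_tL^2_x$, $\re\in L^\infty_tL^\gamma_x$, $\re^{\alpha/2}\dx\ue\in L^2_{t,x}$, and, using $\beta>-2$, $\dx\re^{(\beta+2)/2}\in L^\infty_tL^2_x$. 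Testing against the effective velocity $w_\e=\ue+\alpha\re^{\alpha-2}\dx\re$ and invoking the coercivity condition \eqref{eq:sc}, valid throughout the range \eqref{eq:abrange}, produces the BD entropy estimate and thus the crucial additional control $\dx\re^{\alpha-1/2}\in L^\infty_tL^2_x$ together with $\dx\re^{(\alpha+\gamma-1)/2}\in L^2_{t,x}$ from the pressure-dissipation term; the mild assumption $2\gamma>\alpha$ enters here to balance pressure and viscosity in the BD identity.

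For the passage to the limit, the $L^\infty_tH^1_x$ bound on a suitable power of the density together with the bound on $\dt\re$ in a negative Sobolev space, inherited from the continuity equation, yields strong $L^p_{t,x}$ convergence of $\re$, and hence of every continuous function of $\re$ of controlled growth. To identify the limit of $\re\ue^2$ I would split
\begin{equation*}
\re\,\ue^2=\sqrt{\re}\,\ue\cdot\sqrt{\re}\,T_k(\ue)+\sqrt{\re}\,\ue\cdot\sqrt{\re}\,(\ue-T_k(\ue)),
\end{equation*}
with $T_k$ a smooth truncation at level $k$: the first product is dealt with by strong convergence of $\sqrt{\re}\,T_k(\ue)$ (from strong convergence of $\sqrt{\re}$ and the uniform bound on $T_k(\ue)$) paired with weak convergence of $\sqrt{\re}\,\ue$, while the remainder is controlled uniformly in $\e$ by a Mellet--Vasseur--type extra integrability of the kinetic energy density and then vanishes as $k\to\infty$. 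The degenerate viscous term $\dx(\re^\alpha\dx\ue)$ is identified by writing it, using $\alpha>1/2$, as $\dx(\re^\alpha\ue)-\alpha\re^{\alpha-1}\ue\,\dx\re$ and expressing each factor as a product of $\sqrt{\re}\,\ue$ with a power of the density whose gradient is controlled by the BD entropy; the Korteweg term is handled analogously thanks to the capillarity bound on $\dx\re^{(\beta+2)/2}$.

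The hardest point I anticipate is, as is traditional for very degenerate viscosities, the interplay between the truncation level $k$ and the value of $\alpha$: the BD entropy only controls $\dx\re^{\alpha-1/2}$ in $L^\infty_tL^2_x$, so the error produced by truncating $\ue$ on the set $\{|\ue|>k\}$ has to be absorbed using only the limited improved integrability of $\sqrt{\re}\,\ue$ that the refined Mellet--Vasseur-type estimate supplies. The constraint $2\gamma>\alpha$ in the statement is precisely what makes this absorption work, and the thresholds $\alpha>1/2$, $\beta>-2$ in \eqref{eq:condnew} are those at which the BD entropy and the capillarity bound respectively become genuine $H^1$-type estimates on powers of the density, which is what ultimately enables the compactness and limit-passage arguments above.
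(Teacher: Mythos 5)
Your proposal has two genuine gaps, both at points where the paper's specific choices are doing the real work. First, the construction of the approximations: adding a linear viscosity $\e\dxx\ue$ and lifting the initial density to $\rho_0^\e\geq\e$, while keeping $\mu(\rho)=\rho^{\alpha}$ and $k(\rho)=\rho^{\beta}$ untouched, does not give global smooth solutions with density bounded away from zero. An initial lower bound does not propagate, the blow-up criterion \eqref{eq:chartime} involves $\|1/\rho\|_{L^\infty}$, and in the regime $\alpha>\tfrac12$, $\beta>-2$ neither the energy nor the BD entropy of your regularized system controls any negative power of $\re$ (they only bound $\dx\re^{\alpha-\frac12}$ and $\dx\re^{\frac{\beta}{2}+1}$, i.e.\ upper bounds on the density). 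This is precisely why the paper perturbs the viscosity as in \eqref{eq:viscosity} and, crucially, replaces the capillarity by the matched coefficient \eqref{eq:capillarity}, so that Theorem \ref{teo:generalsc} guarantees the coercivity \eqref{eq:sc} for the perturbed pair and the BD entropy then yields $\e\|\re^{-1/4}\|_{L^\infty_{t,x}}\leq C$ (Proposition \ref{prop:vacuum}), which is what makes Theorem \ref{teo:appex} work; you cannot invoke that appendix for your different regularization, and if you perturbed $\mu$ without adapting $k$ the sign of $\mathcal{J}$ would be lost.

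Second, the limit passage for the convection term rests on a ``Mellet--Vasseur--type extra integrability'' of $\re|\ue|^2$ that you never derive and that is not available in this setting: such an estimate is already problematic for large $\alpha$ without capillarity, and the Korteweg term generates additional contributions in that computation which are not known to be controllable; avoiding it is exactly the purpose of the truncation method of \cite{LV} used in the paper, where one passes to the limit in the equation satisfied by $\re\beta_{\sigma}(\ue)$, all truncation errors are bounded by $C\sigma$ uniformly in $\e$ through the dissipation bounds, and the final limit $\sigma\to0$ only requires $\rho|u|^2\in L^{\infty}_tL^{1}_x$ and dominated convergence applied to the fixed limit functions. Without an equi-integrability estimate, your remainder $\sqrt{\re}\,\ue\cdot\sqrt{\re}\,(\ue-T_k(\ue))$ is merely bounded in $L^1$ and there is no mechanism making it small as $k\to\infty$ uniformly in $\e$, so the two limits cannot be interchanged. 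A secondary but real omission: saying the Korteweg term is ``handled analogously'' ignores that it is quadratic in $\dx\re$; the $L^{\infty}_tL^{2}_x$ bound on $\dx\re^{\frac{\beta}{2}+1}$ only gives weak convergence and does not identify the limits of $k(\re)|\dx\re|^2$-type terms, for which one needs the quantitative coercivity bounds \eqref{eq:dissipation1} (via Corollary \ref{cor:gbd}), i.e.\ $\dxx\re^{\theta}\in L^{2}_{t,x}$ and $\dx\re^{\frac{\theta}{2}}\in L^{4}_{t,x}$, together with a strong-convergence argument for the weighted gradients as in Lemma \ref{lem:gradients}.
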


\begin{remark}\label{rem:main}\mbox{}

\begin{itemize}
\item[i)] We stress that no upper bound on $\alpha$ is assumed. Therefore, Theorem \ref{teo:main} includes the case when the viscosity coefficient and the capillarity coefficient, as a consequence of \eqref{eq:condnew}, are very degenerate. 
\item[ii)] The range \eqref{eq:condnew} in Theorem \ref{teo:main} corresponds to the {\em Tame capillarity condition} introduced in \cite{GLF2012}, which generalizes to the case of densities not close to a constant the classical condition that capillarity is controlled by the square of the viscosity, see the discussion in \cite{GLF2012}. Note that in the range 
\begin{equation*}
\alpha>\frac{1}{2},\quad\beta>-2,\quad 2\alpha-4<\beta<2\alpha-3,
\end{equation*}
the BD Entropy \eqref{eq:bdgeneral} still holds, but it is not clear how to prove the global existence of weak solutions. 
\item[iii)] The result can be extended to the case when the domain is the entire real line. On the other hand, the case of domains with boundary is not clear. Indeed, as remarked in \cite{GLF2012} the Strong Coercivity condition may not hold in this case. 
\item[iv)] It would be interesting to understand whether is possible to prove the global existence of weak solutions only with finite energy, so without using the BD Entropy. This seems to be difficult in the generality of the assumptions \eqref{eq:def1}. On the other hand, the case $\alpha>0$ and $\beta<-2$ seems more promising since already the energy implies the non-formation of vacuum. 
\end{itemize}
\end{remark}
The proof of Theorem \ref{teo:main} is based on a compactness argument. In particular, the first step is to construct an approximation method with approximating Energy estimate and BD Entropy. Since the BD Entropy is based on a non-linear transformation of the unknowns, building the approximation is not trivial. To this end, a key point in the proof of Theorem \ref{teo:main} is a generalization of the characterization proved in \cite{GLF2012}. It has already been noted in \cite{AS1,BCNV} that, even in the multidimensional case, if
\begin{equation}\label{eq:quantum}
k(\rho)=\frac{(\mu'(\rho))^2}{\rho}
\end{equation}
then the condition \eqref{eq:sc} holds. The relation \eqref{eq:quantum} has been fundamental in the proof of the results in \cite{AS1} and \cite{BVY}, and it has also been used in the one dimensional case, see \cite{BH1,BH2}. In particular, if we consider power laws coefficients as in \eqref{eq:def1}, the relation \eqref{eq:quantum} implies $\beta=2\alpha-3$, and the upper bound on $\alpha$ obtained in \cite{GLF2012} can be improved, see \cite{BH1, BVY}. Inspired by the condition \eqref{eq:quantum} we prove the following generalization of \cite[Proposition 2.3]{GLF2012}. 
\begin{theorem}\label{teo:generalsc}
Let $\mu:[0,\infty)\mapsto(0,\infty)$ be a positive strictly increasing function such that $\mu(0)=0$. Assume that $\mu$ is smooth on $(0,\infty)$. Then if
\begin{equation}\label{eq:general}
k(\rho):=\rho^{\delta}(\mu'(\rho))^2,
\end{equation}
the {\em Strong Coercivity Condition} \eqref{eq:sc} holds if and only if $-2\leq \delta\leq 1$.
\end{theorem}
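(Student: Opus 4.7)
The plan is to reduce the sign analysis of $\mathcal{J}(\rho)$ to an explicit $\delta$-dependent expression. Introduce the auxiliary variable $\xi(\rho):=\int_0^\rho\sqrt{k(s)}\,ds$, so that under the ansatz \eqref{eq:general} we have $\xi'(\rho)=\rho^{\delta/2}\mu'(\rho)$. A direct computation confirms the pointwise identity
\[
\dx\!\bigl(k(\rho)\dx\rho\bigr)-\frac{k'(\rho)}{2}(\dx\rho)^2=\sqrt{k(\rho)}\,\dxx\xi,
\]
together with the structural cancellation
\[
\mu'(\rho)k'(\rho)-2\mu''(\rho)k(\rho)=\delta\,\rho^{\delta-1}(\mu'(\rho))^3,
\]
which is the algebraic reason why the details of $\mu$ drop out of the sign analysis and only $\delta$ survives.

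First I would substitute the identity into the definition of $\mathcal{J}(\rho)$, integrate by parts twice (legitimate by periodicity together with $\rho\in H^{2}(\T)$ bounded away from zero), and use the cancellation above to collapse all cross terms. A final integration by parts via $(\dx\xi)^2\dxx\xi=\frac{1}{3}\dx((\dx\xi)^3)$ yields the representation
\[
\mathcal{J}(\rho)=\int\mu'(\rho)(\dxx\xi)^2\,dx\ -\ \frac{\delta(\delta+2)}{12}\int\frac{(\dx\xi)^4}{\rho^{\delta+2}\mu'(\rho)}\,dx.
\]
As a sanity check, the quantum case $\delta=-1$, $\mu(\rho)=\rho$ reduces to the familiar Bohm-type formula $\mathcal{J}(\rho)=4\int(\dxx\sqrt\rho)^2\,dx+\frac{1}{12}\int|\dx\rho|^4/\rho^3\,dx$ of \cite{AS1,BVY}, which serves as an independent consistency check on the signs.

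Sufficiency is then immediate in the range $-2\leq\delta\leq 0$, since $\delta(\delta+2)\leq 0$ makes both summands in the representation manifestly nonnegative. In the remaining range $0<\delta\leq 1$ the two terms carry opposite signs, and here I would perform a further integration by parts -- equivalently, introduce a second change of variable $\tau(\rho)$ with $\tau'(\rho)=\rho^{\delta/4}\mu'(\rho)$ -- in order to recast the quartic integral as a piece controlled by $\int\mu'(\rho)(\dxx\xi)^2\,dx$, completing the square. The endpoint $\delta=1$ is the sharp case: after this manipulation it reduces to the trivial inequality $\int\rho^{\delta/2}\mu'(\rho)(\dxx\xi)^2\,dx\geq 0$.

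For necessity, when $\delta\notin[-2,1]$ I would exhibit a family $\rho_n=1+\e_n\psi(n\,\cdot)$ for a fixed smooth bump $\psi$ on $\T$ and a carefully tuned scaling $\e_n$; a routine computation of the two integrals above shows that the leading-order contribution carries the sign of $-\delta(\delta+2)$ (when $\delta>1$) or fails the analogous balance (when $\delta<-2$), so that $\mathcal{J}(\rho_n)<0$ for $n$ large, violating \eqref{eq:sc}. The main technical hurdle will be the completion-of-squares step in the range $0<\delta\leq 1$; it is made tractable precisely by the $\mu$-independence afforded by the cancellation displayed in the first paragraph, which reduces the sharp inequality to the one-parameter family already analyzed in the power-law setting of \cite[Proposition 2.3]{GLF2012}.
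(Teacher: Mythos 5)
Your starting identity is correct: with $\xi'(\rho)=\rho^{\delta/2}\mu'(\rho)$ one does get
\begin{equation*}
\mathcal{J}(\rho)=\int\mu'(\rho)|\dxx\xi|^2\,dx-\frac{\delta(\delta+2)}{12}\int\rho^{-\delta-2}\frac{|\dx\xi|^4}{\mu'(\rho)}\,dx,
\end{equation*}
which is a variant of the paper's identity \eqref{eq:sc2} (the two differ by one integration by parts, since $\dx\xi=\rho^{\delta/2}\dx\mu(\rho)$); accordingly your ``manifestly nonnegative'' range is $-2\leq\delta\leq0$ where the paper's is $0\leq\delta\leq1$. The remaining range $0<\delta\leq1$ can indeed be closed, but only by the weighted Bernis inequality that you never state: writing $\rho^{-\delta-2}|\dx\xi|^4/\mu'=\rho^{-\delta/2-2}\dx\rho\,|\dx\xi|^2\dx\xi$, integrating by parts and applying Cauchy--Schwarz gives $\int\rho^{-\delta-2}|\dx\xi|^4/\mu'\,dx\leq\tfrac{36}{(\delta+2)^2}\int\mu'|\dxx\xi|^2\,dx$, and $\tfrac{\delta(\delta+2)}{12}\leq\tfrac{(\delta+2)^2}{36}$ is exactly $\delta\leq1$; this is the paper's Lemma \ref{lem:generalbernis} in your variables, and it must be proved for general $\mu$ — it does not ``reduce to the power-law family'' of \cite{GLF2012}. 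The mechanism you describe is also off in the details: completing the square with $\tau'=\rho^{\delta/4}\mu'$ leaves the nonpositive remainder $-\tfrac{\delta^2}{16}\int\rho^{-\delta-2}|\dx\xi|^4/\mu'\,dx$, including at $\delta=1$, so the endpoint does not become a single nonnegative square in $\tau$; at $\delta=1$ the correct trivial form is $\mathcal{J}(\rho)=\int\rho\,\mu'(\rho)|\dxx\mu(\rho)|^2\,dx$ (the paper's observation), or equivalently $\delta=1$ is the equality case of the Cauchy--Schwarz step above. These points are repairable, but as written the key inequality and the constant check are missing.

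The genuine gap is the necessity direction. For $\rho_n=1+\e_n\psi(n\cdot)$ both integrals in your representation scale like $n^4$ times period averages of the fixed profile, so the frequency $n$ buys nothing; and if $\e_n\to0$ the positive term is $O(\e_n^2)$ while the quartic term is $O(\e_n^4)$, so $\mathcal{J}(\rho_n)>0$. There is no regime in which ``the leading-order contribution carries the sign of $-\delta(\delta+2)$''. To obtain $\mathcal{J}<0$ for $\delta$ slightly above $1$ (or below $-2$) you need densities whose ratio $\int\rho^{-\delta-2}|\dx\xi|^4/\mu'\,dx\,\big/\int\mu'|\dxx\xi|^2\,dx$ lies within $O(|\delta-1|)$ of the sharp constant $36/(\delta+2)^2$, i.e.\ near-extremizers of the weighted Bernis inequality; such profiles have deep valleys approaching vacuum, must be adapted to $\mu$ through the weight $\mu'$, and their construction is not a routine computation. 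The paper avoids this entirely: necessity is obtained by specializing to $\mu(\rho)=\rho^{\alpha}$, for which $k(\rho)=\alpha^2\rho^{\delta+2\alpha-2}$, and invoking Proposition \ref{cor:bd}, whose own necessity rests on the quoted optimality of the constant $1/9$ in Bernis' inequality, \cite[Theorem 2.1]{GLF2012}. Either adopt that reduction or supply an explicit extremizing family; the counterexample construction you propose does not produce $\mathcal{J}(\rho_n)<0$.
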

Theorem \ref{teo:generalsc} is crucial to build approximating solutions $(\re,\ue)$ such that $\re$ is bounded away from zero and $(\re,\ue)$ satisfies approximating versions of \eqref{eq:energy} and \eqref{eq:bdgeneral}. Indeed, one can consider the general Navier-Stokes-Korteweg equations \eqref{eq:nsk} with viscosity coefficient $\rho^{\alpha}$ being perturbed with 
powers of the density with small exponents, and the capillarity coefficient given by \eqref{eq:general}. Then, the control on the strict positivity of the density will be given by \eqref{eq:bdgeneral}, and thus a smooth sequence of approximating solutions of \eqref{eq:nsk} can be constructed. Next, their convergence must be proved. The main tool in the proof of the convergence of the approximating solutions is the truncation method introduced in \cite{LV} for the multi-dimensional Quantum-Navier-Stokes, and already exploited for some other particular choice of the coefficients, as in \cite{AS1,BVY}. Note since the approximation are smooth, the original argument in \cite{LV} is highly simplified. We mention that in the one-dimensional case also different methods may be used, {\em e. g.} kinetic entropies and Young measures, see \cite{GLF2012, LPT1994, CP2010}. On the other hand, the truncation methods used in this paper can be generalized to the multi-dimensional case. \\
We conclude this introduction by giving a brief account of the state of art of the Navier-Stokes-Korteweg equations. The general multidimensional system in the periodic setting is 
\begin{equation}\label{eq:nskgeneral}
\begin{aligned}
&\partial_t\rho+\dive(\rho u)=0\\
&\partial_t(\rho u)+\dive(\rho u\otimes u)+\nabla p=\nu\dive\mathbb S+\kappa\dive\mathbb K,
\end{aligned}
\end{equation}
where $u:(0,T)\times\T^{d}\mapsto\R^{d}$, $\rho:(0,T)\times \T^{d}\mapsto\R^{+}$, and $\nu,\kappa>0$. In this case
\begin{equation}\label{eq:visc}
\mathbb S:=\mu(\rho)\,D u+\lambda(\rho)\dive u\mathbb I,
\end{equation}
is the viscosity stress tensor and
\begin{equation}\label{eq:cap}
\mathbb K:=\left(\rho\dive(k(\rho)\nabla\rho)-\frac12(\rho k'(\rho)-k(\rho))|\nabla\rho|^2\right)\mathbb I
-k(\rho)\nabla\rho\otimes\nabla\rho
\end{equation}
is the capillarity tensor. In the case  $\kappa=0$ \eqref{eq:nskgeneral} reduces to the system of compressible Navier-Stokes equations. When $d=1$, in the model case $\mu(\rho)=\rho^{\alpha}$, several recent results concerning the global regularity of solutions with density bounded away from the zero have been obtained without any restriction of $\alpha$, we refer in particular to \cite{H, MV2, JX, CDS, CDTP}. If $d=2,3$, in general for large initial data only the existence of finite energy solutions is known. In particular, when the viscosity coefficient $\mu(\rho)$ is chosen degenerating on the vacuum region $\{\rho=0\}$ the Lions-Feireisl theory, \cite{L}, \cite{F}, and the recent approach in \cite{BJ} cannot be used since they rely on the Sobolev bound of the velocity field. If $\lambda(\rho)=\rho\mu'(\rho)-\mu(\rho)$ and $\mu(\rho)$ is a powers of the density, finite energy weak solutions are studied in \cite{BDZ, LX, VY1,  LV, BVY, CCH1}. Well-posedness of regular solutions with vacuum are also studies, see \cite{XZ1, XZ2} and also \cite{LPZ} where the shallow water equations are considered.\\

 When the viscosity $\nu=0$, the system \eqref{eq:nskgeneral} is called Euler-Korteweg. In \cite{BGDD} local well-posedness for smooth, small perturbations of the reference solution $\rho=1, u=0$ has been proved, while in \cite{AH} the result was extended to global irrotational solutions in the same framework. Moreover, when $k(\rho)=1/\rho$ the system \eqref{eq:nskgeneral} is called Quantum Hydrodynamic system (QHD) and arises for example in the description of quantum fluids. The global existence of finite energy weak solutions for the QHD system has been proved for $d=2,3$ in \cite{AM, AM2} without restrictions on the regularity or the size of the initial data. see also \cite{AMZ2} for further result in the multi-dimensional case. Moreover the case $d=1$ has been studied in details, by proving also further properties on the solutions, in \cite{AMZ1}. Finally, non-uniqueness results have been proved in \cite{DFM, MSi}.\\

Concerning the case $\nu,\kappa>0$, for the case $d=1$, besides the aforementioned result in \cite{GLF2012} where global existence of finite energy weak solutions are proved for $\alpha$ and $\beta$ satisfying \eqref{eq:cond1} and \eqref{eq:cond2}, results concerning the global regularity of solutions have been obtained for $\kappa(\rho)$ and $\mu(\rho)$ satisfying \eqref{eq:quantum} in \cite{BH1,BH2}. For $d=2,3$, the global existence of weak solutions  has been proved in \cite{AS1, LV} for the Quantum-Navier-Stokes equations with linear viscosity coefficients, namely $\mu(\rho)=\rho$ and $\kappa(\rho)=1/\rho$. See also \cite{AHS} for the case of non trivial far-field conditions. For the constant capillarity case $\kappa(\rho)=c$ and linear viscosity, the global existence of weak solutions has been proved in \cite{AS4}. Finally, for general viscosity and capillarity coefficient, still satisfying \eqref{eq:quantum}, the global existence of weak solutions has been proved in \cite{BVY}. Note that in the case the coefficients are as in \eqref{eq:def1}, the exponent $\alpha\in(2/3,4)$.\\
 
The analysis of singular limits for the equations \eqref{eq:nskgeneral}-\eqref{eq:cap} has also been considered for certain choice of the coefficients in \cite{BGL, CD, CD1, DFM, GLT, DM1, AHM, ACLS, DM, BGL}, and the analysis of the long time behaviour for the isothermal Quantum-Navier-Stokes equations has been performed in \cite{CCH}. Finally, in \cite{LT} the authors, by using a strategy similar to \cite{LX}, study the existence of global in time finite energy weak solutions to the compressible primitive equations with degenerate viscosity.

\subsection*{Notations}
We use standard notation. In particular, the space of periodic smooth functions with value in $\R$ compactly supported in $(0,T)\times\T$ will be denoted $C^{\infty}_c((0,T)\times\T;\R)$. We will denote with $L^{p}(\T)$ the standard Lebesgue spaces and with $\|\cdot\|_{L^p}$ their norm. The Sobolev space of functions with $k$ distributional derivatives in $L^{p}(\T)$ is $W^{k,p}(\T)$ and in the case $p=2$ we will write $H^{k}(\T)$. The spaces $W^{-k,p}(\T)$ and $H^{-k}(\T)$ denote the dual spaces of $W^{k,p'}(\T)$ and $H^{k}(\T)$ where $p'$ is the H\"older conjugate of $p$. Given a Banach space $X$ we use the classical Bochner space for time dependent functions with value in $X$, namely $L^{p}(0,T;X)$, $W^{k,p}(0,T;X)$ and $W^{-k,p}(0,T;X)$ and when $X=L^p(\Omega)$, the norm of the space $L^{q}(0,T;L^{p}(\Omega))$ is denoted by $\|\cdot\|_{L^{q}_{t}L^{p}_{x}}$. Then, the space $C(0,T;X_w)$ is the space of continuous functions with value in the space $X$ endowed with the weak topology.

\subsection*{Organization of the paper}
The paper is organized as folllows. In Section \ref{sec:sc} we prove Theorem \ref{teo:generalsc} and some addiotional results regarding the {\em Strong Coercivity Condition} which will be used in the sequel. In Section \ref{sec:weaksolution} we give the precise definition of finite energy weak solution for the system \eqref{eq:nsk} and we state precisely the main result of the present paper. Next, in Section \ref{sec:app} we introduce the approximating system and sketch the proof of the global regularity. Finally, in Section \ref{sec:proof} we give the proof of the main result.

\subsection*{Acknowledgments}
The first and the third author gratefully acknowledge the partial support by the Gruppo
Na\-zio\-na\-le per l’Analisi Matematica, la Probabilit\`a e le loro
Applicazioni (GNAMPA) of the Istituto Nazionale di Alta Matematica
(INdAM), and by the PRIN 2020 ``Nonlinear evolution PDEs, fluid
dynamics and transport equations: theoretical foundations and
applications''. The first author is also partially funded by the Italian Ministry of University and Research (MUR) through the Excellence Department Project awarded to GSSI, CUP D13C22003740001.
The second
author gratefully acknowledges the partial support by the Agence Nationale
pour la Recherche grant ANR-23-CE40-0014-01 (ANR Bourgeons).
This work also benefited of the support of the ANR under France 2030 
bearing the reference ANR-23-EXMA-004 (Complexflows project).
The third author is also partially supported by the PRIN2022
``Classical equations of compressible fluids mechanics: existence and
properties of non-classical solutions''and the PRIN2022-PNRR ``Some
mathematical approaches to climate change and its impacts.''

\section{The strong coercivity condition}\label{sec:sc}
This section starts with some observations taken from \cite{AB}. More precisely, in this section we recall some basic inequality 
concerning the {\em Strong Coercivity Condition} and we give the proof of Theorem \ref{teo:generalsc}. We start by recalling the following
\begin{lemma}[Bernis inequality]\label{lem:bernis}
Let $\rho$ be any smooth positive function bounded away from zero. Then,
\begin{equation}\label{eq:bernis}
\frac{1}{9}\int\,\frac{|\dx\rho|^4}{\rho^2}\,dx\leq \int\,|\dxx\,\rho|^2\,dx.
\end{equation}
\end{lemma}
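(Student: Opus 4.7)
The plan is to produce the identity $\int \frac{(\partial_x\rho)^4}{\rho^2}\,dx = 3 \int \frac{(\partial_x\rho)^2\,\partial_{xx}\rho}{\rho}\,dx$ by an integration by parts, and then apply Cauchy--Schwarz to absorb the resulting quartic term back into the left-hand side.

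The computation I would do first is the pointwise identity
\[
\partial_x\!\left(\frac{(\partial_x\rho)^3}{\rho}\right)
= \frac{3(\partial_x\rho)^2\,\partial_{xx}\rho}{\rho} - \frac{(\partial_x\rho)^4}{\rho^2},
\]
which is well-defined since $\rho$ is smooth and bounded away from zero. Integrating over $\T$ kills the exact derivative on the left by periodicity, giving the identity above. This is the one calculation where one needs that $\rho>0$, and it is exactly why the statement assumes $\rho$ stays away from zero: otherwise $\frac{(\partial_x\rho)^3}{\rho}$ might fail to be smooth and the boundary terms in the periodic IBP would not clearly vanish.

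Next I would Cauchy--Schwarz the right-hand side, writing
\[
3\int \frac{(\partial_x\rho)^2\,\partial_{xx}\rho}{\rho}\,dx
\le 3\left(\int \frac{(\partial_x\rho)^4}{\rho^2}\,dx\right)^{1/2}\!\left(\int|\partial_{xx}\rho|^2\,dx\right)^{1/2}.
\]
Setting $A := \int \frac{(\partial_x\rho)^4}{\rho^2}\,dx$ and $B := \int|\partial_{xx}\rho|^2\,dx$, the identity and the inequality combine to $A \le 3 A^{1/2} B^{1/2}$, hence $A \le 9 B$, which is the claimed inequality $\tfrac{1}{9}A \le B$.

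I do not anticipate a real obstacle: the only subtle point is justifying the integration by parts, and the hypothesis that $\rho$ is smooth and bounded away from zero makes $\frac{(\partial_x\rho)^3}{\rho}$ a smooth periodic function, so the boundary term vanishes automatically on $\T$. The factor $\tfrac{1}{9}$ is sharp in the sense that it drops directly out of the Cauchy--Schwarz step, so no further optimisation is needed.
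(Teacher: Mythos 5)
Your proof is correct: the pointwise identity, the vanishing of the exact derivative by periodicity, and the Cauchy--Schwarz absorption giving $A\le 3A^{1/2}B^{1/2}$, hence $A\le 9B$, are all valid under the stated hypotheses. The paper itself does not write out a proof of this lemma (it cites Bernis), but your argument is exactly the integration-by-parts plus Cauchy--Schwarz scheme the paper uses for its generalization in Lemma \ref{lem:generalbernis} and for the identity $\int\frac{|\partial_x\rho^{\theta}|^{2}}{\rho^{\theta}}\partial_{xx}\rho^{\theta}\,dx=\frac{1}{3}\int\frac{|\partial_x\rho^{\theta}|^{4}}{\rho^{2\theta}}\,dx$ in Proposition \ref{cor:bd}, so it is essentially the same approach.
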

The proof of the lemma is elementary and can be found in \cite{Be}. We remark that the constant $1/9$ is optimal in the sense that if the inequality holds with constant $c>1/9$, then $\rho$ is identically constant, see \cite[Theorem 2.1]{GLF2012}.
\begin{remark}\label{rem:theta0}
Clearly, \eqref{eq:bernis} can applied to $\rho^{\theta}$ for any $\theta\not=0$. In this case it reads
\begin{equation*}
\frac{1}{9}\int\frac{|\dx\rho^{\theta}|^4}{\rho^{2\theta}}\,dx\leq \int|\dxx\rho^{\theta}|^2\,dx. 
\end{equation*}
If $\theta=0$ there is no estimates. Indeed, formally we would get 
\begin{equation*}
\frac{\theta^{2}}{9}\int\frac{|\dx\rho^{\theta}|^4}{\theta^{4}\rho^{2\theta}}\,dx\leq \int\frac{|\dxx\rho^{\theta}|^2}{\theta^2}\,dx,
\end{equation*}
which for $\theta=0$ would lead to 
\begin{equation*}
0\leq \int|\dxx\log\rho|^2\,dx. 
\end{equation*}
\end{remark}
For sake of completeness we state \cite[Proposition 2.3]{GLF2012}, and we give a proof. 
\begin{proposition}\label{cor:bd}
Let $\mu$ and $k$ satisfy \eqref{eq:def1}. The  $\mathcal{J}(\rho)\geq 0$ if and only if $\alpha$ and $\beta$ satisfy
\begin{equation}\label{eq:abc1}
2\alpha-4\leq\beta\leq2\alpha-1.
\end{equation}
Moreover, if 
\begin{equation}\label{eq:abc2}
2\alpha-4<\beta<2\alpha-1.
\end{equation}
there exists a constant $c=c(\alpha,\beta)>0$ such that
\begin{equation}\label{eq:dissipation1}
\int|\dxx\rho^{\theta}|^2\,dx+\int|\dx\rho^{\frac{\theta}{2}}|^4\,dx\leq c\mathcal{J}(\rho).
\end{equation}
when $\theta:=\frac{\alpha+\beta+1}{2}\not=0$. If $\alpha+\beta+1=0$ then there exists a constant $c=c(\alpha,\beta)>0$ such that
\begin{equation*}
\int|\dxx\log\rho|^2\,dx+\int|\partial_x\log\rho|^4\,dx\leq  c\mathcal{J}(\rho).
\end{equation*}
\end{proposition}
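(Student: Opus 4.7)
The plan is to integrate by parts, reduce $\mathcal{J}(\rho)$ to a two-term expression involving $\int(\dxx\rho^\theta)^2\,dx$ and $\int|\dx\rho^{\theta/2}|^4\,dx$ with $\theta:=(\alpha+\beta+1)/2$, and then combine with Bernis' inequality (Lemma~\ref{lem:bernis}) to pin down the precise range of $(\alpha,\beta)$ for which this expression is non-negative for every admissible $\rho$.

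First I would integrate by parts to rewrite $\mathcal{J}(\rho)=-\int \dxx\mu(\rho)\,Y\,dx$ with $Y:=k(\rho)\dxx\rho+\tfrac12 k'(\rho)|\dx\rho|^2$, expand $\dxx\mu(\rho)=\mu'(\rho)\dxx\rho+\mu''(\rho)|\dx\rho|^2$, and substitute $\mu=\rho^\alpha$, $k=\rho^\beta$. The integrand becomes a linear combination of $\rho^{\alpha+\beta-1}(\dxx\rho)^2$, $\rho^{\alpha+\beta-2}(\dx\rho)^2\dxx\rho$ and $\rho^{\alpha+\beta-3}(\dx\rho)^4$; the mixed middle term is removed by the identity
\[
\int \rho^{\sigma}(\dx\rho)^2\dxx\rho\,dx = -\frac{\sigma}{3}\int \rho^{\sigma-1}(\dx\rho)^4\,dx,
\]
obtained from $(\dx\rho)^2\dxx\rho=\tfrac{1}{3}\dx[(\dx\rho)^3]$ after integrating by parts against $\rho^\sigma$. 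Next, the pointwise identities $\rho^{\theta-1}\dxx\rho=\theta^{-1}\dxx\rho^\theta-(\theta-1)\rho^{\theta-2}|\dx\rho|^2$ and $\rho^{\theta-2}|\dx\rho|^2=(4/\theta^2)|\dx\rho^{\theta/2}|^2$, combined with one further application of the same IBP identity to $s=\rho^{\theta/2}$ (which gives $\int|\dx\rho^{\theta/2}|^2\dxx\rho^\theta\,dx=\tfrac{4}{3}\int|\dx\rho^{\theta/2}|^4\,dx$), bring $\mathcal{J}(\rho)$ into the canonical form
\[
\mathcal{J}(\rho)=\frac{A}{\theta^2}\int(\dxx\rho^\theta)^2\,dx+\frac{B(\alpha,\beta)}{\theta^4}\int|\dx\rho^{\theta/2}|^4\,dx,
\]
with $A\ne 0$ an absolute constant and $B(\alpha,\beta)$ an explicit polynomial in $(\alpha,\beta)$.

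For the sign, I would apply Lemma~\ref{lem:bernis} with $\rho^\theta$ in place of $\rho$. Using $|\dx\rho^\theta|^4/\rho^{2\theta}=16|\dx\rho^{\theta/2}|^4$ this yields the sharp bound
\[
\int|\dx\rho^{\theta/2}|^4\,dx\le\frac{9}{16}\int(\dxx\rho^\theta)^2\,dx.
\]
When the sign of $B(\alpha,\beta)/A$ is favourable, both terms in the canonical form already have a common sign; in the opposite case the above Bernis bound lets one absorb $\int|\dx\rho^{\theta/2}|^4$ into $\tfrac{9}{16}\int(\dxx\rho^\theta)^2$ and reduces the condition $\mathcal{J}(\rho)\ge 0$ to a single quadratic inequality in $(\alpha,\beta)$. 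A direct algebraic check — completing the square in the variable $t:=2\alpha-\beta$ — will show that this quadratic inequality factors as $(t-1)(t-4)\le 0$, i.e.\ $2\alpha-4\le\beta\le 2\alpha-1$, which is \eqref{eq:abc1}. Under the strict condition \eqref{eq:abc2} the factorization is strict, so the Bernis-reduced bound comes with a strictly positive coefficient and yields $\mathcal{J}(\rho)\ge c(\alpha,\beta)\int(\dxx\rho^\theta)^2\,dx$ with $c(\alpha,\beta)>0$; a second use of the Bernis bound then recovers the $\int|\dx\rho^{\theta/2}|^4$ estimate, delivering \eqref{eq:dissipation1}. Necessity of \eqref{eq:abc1} for $\mathcal{J}(\rho)\ge 0$ uses the sharpness of the constant $1/9$ in Lemma~\ref{lem:bernis} (cf.\ the remark following the lemma) to produce near-extremal $\rho$ realizing $\mathcal{J}(\rho)<0$ whenever $(t-1)(t-4)>0$. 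The degenerate case $\alpha+\beta+1=0$, i.e.\ $\theta=0$, is obtained by passing to the limit $\theta\to 0$ in the canonical form, with $\rho^\theta$ replaced by $\log\rho$ and $\rho^{\theta/2}$ by $\tfrac12\log\rho$, as suggested by Remark~\ref{rem:theta0}.

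The main obstacle is the algebraic bookkeeping: carrying out the reduction to the canonical form and verifying that the polynomial $B(\alpha,\beta)$ pairs with the sharp Bernis constant $9/16$ to produce exactly the factorization $(2\alpha-\beta-1)(2\alpha-\beta-4)$. It is precisely this coincidence between the endpoints of \eqref{eq:abc1} and the extremal ratio in Bernis' inequality that makes \eqref{eq:abc1} the correct closed characterization of $\mathcal{J}(\rho)\ge 0$, while simultaneously forcing the open condition \eqref{eq:abc2} in the strict dissipation estimate \eqref{eq:dissipation1}.
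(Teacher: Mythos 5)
Your proposal is correct and follows essentially the same route as the paper's own proof: reduce $\mathcal J(\rho)$, via the identity $\int\rho^{\sigma}(\dx\rho)^2\dxx\rho\,dx=-\tfrac{\sigma}{3}\int\rho^{\sigma-1}(\dx\rho)^4\,dx$, to a two-term expression in $\int|\dxx\rho^{\theta}|^2$ and $\int|\dx\rho^{\theta/2}|^4$, invoke Bernis' inequality with the sharp constant $1/9$ (sharpness giving necessity), observe that the resulting coefficient condition is exactly $(2\alpha-\beta-1)(2\alpha-\beta-4)\le 0$ (your factorization in $t=2\alpha-\beta$ is indeed what the algebra yields, and your two-step absorption argument gives \eqref{eq:dissipation1} under the strict inequalities), and handle $\theta=0$ with $\log\rho$ as in Remark \ref{rem:theta0}. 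The one point to fix is the sign at your very first step: the paper's computation uses $\mathcal J(\rho)=\int\dxx\mu(\rho)\bigl[\dx(k(\rho)\dx\rho)-\tfrac{k'(\rho)}{2}|\dx\rho|^2\bigr]\,dx$ (no leading minus), so the minus sign you obtain by integrating the displayed definition \eqref{eq:bdterm} by parts must be dropped (it reflects an inconsistent sign convention in \eqref{eq:bdterm} itself), since carried literally it would put a negative coefficient on $\int|\dxx\rho^{\theta}|^2$ and reverse the stated characterization.
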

\begin{proof}
We start by noting that if $\mu$ and $k$ satisfy \eqref{eq:def1}, then 
\begin{equation*}
\mathcal{J}(\rho)=\int\dxx\rho^{\alpha}\left(\dx(\rho^{\beta}\dx\rho)-\frac{\beta\rho^{\beta-1}}{2}|\dx\rho|^2\right)\,dx.
\end{equation*}
We first consider the case $\theta\not=0$. By a direct computation we have that 
\begin{equation}\label{eq:theta2}
\begin{aligned}
\frac{\theta^2}{\alpha}\mathcal{J}(\rho)&=\frac{(\alpha-\beta-1)(1-\alpha)}{(\alpha+\beta+1)^2}\int\frac{|\dx\rho^{\theta}|^4}{\rho^{2\theta}}\,dx+\int|\dxx\rho^{\theta}|^2\,dx\\
&-\frac{\beta}{\alpha+\beta+1}\int\frac{\dxx\rho^{\theta}}{\rho^{\theta}}|\dx\rho^{\theta}|^2\,dx.
\end{aligned}
\end{equation}
By using integration by parts we have that
\begin{equation*}
\int\frac{|\partial_x\rho^{\theta}|^2}{\rho^{\theta}}\partial_{xx}\rho^{\theta}\,dx=\frac{1}{3}\int\frac{|\partial_x\rho^{\theta|^4}}{\rho^{2\theta}}\,dx.
\end{equation*}
Therefore, from \eqref{eq:theta2} we obtain that
\begin{equation*}
\begin{aligned}
\frac{\theta^2}{\alpha}\mathcal{J}(\rho)&=\left(\frac{(\alpha-\beta-1)(1-\alpha)}{(\alpha+\beta+1)^2}-\frac{\beta}{3(\alpha+\beta+1)}\right)\int\frac{|\partial_x\rho^{\theta}|^4}{\rho^{2\theta}}\,dx+\int|\partial_{xx}\rho^{\theta}|^2\,dx\\
&\geq\left(\frac{(\alpha-\beta-1)(1-\alpha)}{(\alpha+\beta+1)^2}-\frac{\beta}{3(\alpha+\beta+1)}+\frac{1}{9}\right)\int\frac{|\partial_x\rho^{\theta}|^4}{\rho^{2\theta}}\,dx,
\end{aligned}
\end{equation*}
Note that \eqref{eq:abc1} is equivalente to
\begin{equation}\label{eq:1d}
\frac{(\alpha-\beta-1)(1-\alpha)}{(\alpha+\beta+1)^2}-\frac{\beta}{3(\alpha+\beta+1)}+\frac{1}{9}\geq0
\end{equation}
and thus \eqref{eq:sc} holds. Finally, since the constant in Lemma \ref{lem:bernis} is optimal, it follows that the condition $2\alpha-4\leq\beta\leq2\alpha-1$ is also necessary. Moreover, if \eqref{eq:abc2} holds, that  
\begin{equation}\label{eq:1d}
\frac{(\alpha-\beta-1)(1-\alpha)}{(\alpha+\beta+1)^2}-\frac{\beta}{3(\alpha+\beta+1)}+\frac{1}{9}>0
\end{equation}
and thus \eqref{eq:dissipation1} holds. Concerning the case $\theta=0$, again a direct computation shows that
\begin{equation}\label{eq:theta3}
\frac{1}{\alpha}\mathcal{J}(\rho)=2\alpha(1-\alpha)\int|\dx\log\rho|^4\,dx+\int|\dx\log\rho|^2\,dx+(\alpha+1)\int|\dx\log\rho|^2\dx\log\rho\,dx. 
\end{equation}
Since
\begin{equation*}
\int\partial_{xx}\log\rho\partial_x\log\rho\partial_x\log\rho\,dx=0,
\end{equation*}
we have that 
\begin{equation*}
\frac{\mathcal{J}(\rho)}{\alpha}=2\alpha(1-\alpha)\int|\partial_x\log\rho|^4+\int|\dxx\log\rho|^2. 
\end{equation*}
Taking into account Remark \ref{rem:theta0} we obtain that \eqref{eq:sc} holds if and only if  $0<\alpha\leq1$, which is exactly the condition \eqref{eq:abc1} with $\beta=-\alpha-1$. 
\end{proof}
In the proof of Theorem \ref{teo:generalsc} we need the following lemma.
\begin{lemma}\label{lem:generalbernis}
Let $\mu:[0,\infty)\mapsto(0,\infty)$ be a positive strictly increasing function such that $\mu(0)=0$. Assume that $\mu$ is smooth on $(0,\infty)$. Then, for any $\delta\not=1$ and any function $\rho$ smooth and bounded away from zero it holds that
\begin{equation*}
\frac{(\delta-1)^2}{9}\int\rho^{\delta-2}\frac{|\dx\mu(\rho)|^4}{\mu'(\rho)}\,dx\leq \int\rho^{\delta}\mu'(\rho)|\dxx\mu(\rho)|^2\,dx.
\end{equation*}
\end{lemma}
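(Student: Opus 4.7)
The plan is to prove the inequality directly by integration by parts together with Cauchy--Schwarz, without invoking Lemma~\ref{lem:bernis}; the argument is in fact a weighted generalization of its proof. First, I would use the chain rule $\dx\mu(\rho)=\mu'(\rho)\dx\rho$ to rewrite
\begin{equation*}
\frac{(\delta-1)^2}{9}\int\rho^{\delta-2}\frac{|\dx\mu(\rho)|^4}{\mu'(\rho)}\,dx \;=\; \frac{(\delta-1)^2}{9}\,I,\qquad I:=\int\rho^{\delta-2}\mu'(\rho)^3(\dx\rho)^4\,dx,
\end{equation*}
and denote $J:=\int\rho^{\delta}\mu'(\rho)|\dxx\mu(\rho)|^2\,dx$. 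The goal then becomes $(\delta-1)^2 I\leq 9J$.

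The key step is to establish the algebraic identity
\begin{equation*}
(\delta-1)\,I \;=\; -3\int\rho^{\delta-1}\mu'(\rho)^2(\dx\rho)^2\,\dxx\mu(\rho)\,dx
\end{equation*}
by integrating by parts on $I$. Writing $(\dx\rho)^4=(\dx\rho)^3\cdot\dx\rho$ and moving the trailing $\dx\rho$ onto $\rho$ (periodicity on $\T$ kills the boundary term), then differentiating the remaining factor $\rho^{\delta-2}\mu'(\rho)^3(\dx\rho)^3$, produces three contributions. The one coming from $\rho^{\delta-2}$ is proportional to $I$ itself (with factor $-(\delta-2)$); the one coming from $\mu'(\rho)^3$ equals $-3\int\rho^{\delta-1}\mu'(\rho)^2\mu''(\rho)(\dx\rho)^4\,dx$; and the one coming from $(\dx\rho)^3$ equals $-3\int\rho^{\delta-1}\mu'(\rho)^3(\dx\rho)^2\dxx\rho\,dx$. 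The last two recombine, via the chain rule $\dxx\mu(\rho)=\mu''(\rho)(\dx\rho)^2+\mu'(\rho)\dxx\rho$, into a single term featuring $\dxx\mu(\rho)$; moving the $I$-contribution to the left produces the displayed identity.

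Once the identity is in hand, I would apply Cauchy--Schwarz to its right-hand integrand by splitting symmetrically as
\begin{equation*}
\bigl[\rho^{(\delta-2)/2}\mu'(\rho)^{3/2}(\dx\rho)^2\bigr]\cdot\bigl[\rho^{\delta/2}\mu'(\rho)^{1/2}\dxx\mu(\rho)\bigr],
\end{equation*}
which is legitimate because $\mu'>0$ on $(0,\infty)$ and $\rho$ is bounded away from zero. This gives $|\delta-1|\,I\leq 3\,I^{1/2}J^{1/2}$; squaring and dividing by $I$ (the case $I=0$ being trivial) yields exactly $(\delta-1)^2 I\leq 9J$, as desired. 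The only genuine obstacle is the bookkeeping in the integration by parts---in particular, recognizing that the $\mu''$- and $\dxx\rho$-terms reassemble into $\dxx\mu(\rho)$---after which Cauchy--Schwarz closes the argument immediately.
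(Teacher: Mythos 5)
Your proof is correct and follows essentially the same route as the paper: an integration by parts yielding the identity $(\delta-1)I=-3\int\rho^{\delta-1}|\dx\mu(\rho)|^2\dxx\mu(\rho)\,dx$, followed by Cauchy--Schwarz with exactly the same weight split. The only difference is cosmetic bookkeeping in the integration by parts (the paper writes $\rho^{\delta-2}\dx\rho=\tfrac{1}{\delta-1}\dx\rho^{\delta-1}$ and moves the derivative onto $(\dx\mu(\rho))^3$ directly, whereas you move the trailing $\dx\rho$ onto $\rho$ and recombine the $\mu''$- and $\dxx\rho$-terms), but the resulting identity and conclusion are identical.
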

\begin{proof}
We first note that 
\begin{equation*}
\begin{aligned}
\int\frac{\rho^{\delta-2}|\dx\mu(\rho)|^4}{\mu'(\rho)}\,dx
&=\int\rho^{\delta-2}\dx\rho\dx\mu(\rho)|\dx\mu(\rho)|^2\,dx\\
&=\frac{1}{(\delta-1)}\int\dx\rho^{\delta-1}\dx\mu(\rho)\dx\mu(\rho)\dx\mu(\rho)\,dx.\\
\end{aligned}
\end{equation*}
By integrating by parts we obtain that 
\begin{equation*}
\begin{aligned}
\int\frac{\rho^{\delta-2}|\dx\mu(\rho)|^4}{\mu'(\rho)}\,dx
&=-\frac{3}{(\delta-1)}\int\rho^{\delta-1}|\dx\mu(\rho)|^2\dxx\mu(\rho)\,dx\\
&\leq \frac{3}{|\delta-1|}\int\,\rho^{\frac{\delta}{2}}[\mu'(\rho)]^{\frac{1}{2}}|\dxx\mu(\rho)|\frac{\rho^{\frac{\delta}{2}-1}|\dx\mu(\rho)|^2}{[\mu'(\rho)]^{\frac{1}{2}}}\,dx\\
&\leq \frac{3}{|\delta-1|}\left(\int\rho^{\delta}\mu'(\rho)|\dxx\mu(\rho)|^2\,dx\right)^{\frac{1}{2}}\left(\int\frac{\rho^{\delta-2}|\dx\mu(\rho)|^4}{\mu'(\rho)}\,dx\right)^{\frac{1}{2}},
\end{aligned}
\end{equation*}
where in the last line we have used Cauchy-Schwartz inequality. Then, we can easily conclude. 
\end{proof}
We are ready to prove the main result of this section.
\begin{proof}[Proof of Theorem \ref{teo:generalsc}]
We recall that 
\begin{equation*}
\mathcal{J}(\rho):=\int\left[\dx(k(\rho)\dx\rho)-\frac{k'(\rho)}{2}|\dx\rho|^2\right]\dxx\mu(\rho)\,dx.
\end{equation*}
By \eqref{eq:general} we have that 
\begin{equation*}
k'(\rho)=\delta\rho^{\delta-1}(\mu'(\rho))^2+2\rho^{\delta}\mu'(\rho)\mu''(\rho).
\end{equation*}
Then, 
\begin{equation*}
\begin{aligned}
&\dx(k(\rho)\dx\rho)-\frac{k'(\rho)}{2}|\dx\rho|^2\\
=&\dx(\rho^{\delta}\mu'(\rho)\mu'(\rho)\dx\rho)-\frac{\delta\rho^{\delta-1}}{2}|\mu'(\rho)|^2|\dx\rho|^2-\rho^{\delta}\mu'(\rho)\mu''(\rho)|\dx\rho|^2\\
=&\rho^{\delta}\mu'(\rho)\dxx\mu(\rho)+\frac{\delta\rho^{\delta-1}}{2}|\dx\mu(\rho)|^2.
\end{aligned}
\end{equation*}
Therefore, 
\begin{equation}\label{eq:sc1}
\mathcal{J}(\rho)=\int\rho^{\delta}\mu'(\rho)|\dxx\mu(\rho)|^2\,dx+\frac{\delta}{2}\int\rho^{\delta-1}|\dx\mu(\rho)|^2\dxx\mu(\rho)\,dx. 
\end{equation}
By integrating by parts we obtain that 
\begin{equation*}
\frac{\delta}{2}\int\rho^{\delta-1}|\dx\mu(\rho)|^2\dxx\mu(\rho)\,dx=
-\frac{\delta(\delta-1)}{6}\int\rho^{\delta-2}\dx\rho\dx[\mu(\rho)]^3\,dx=-\frac{\delta(\delta-1)}{6}\int\rho^{\delta-2}\frac{|\dx\mu(\rho)|^4}{\mu'(\rho)}\,dx. 
\end{equation*}
Thus, \eqref{eq:sc1} becomes 
\begin{equation}\label{eq:sc2}
\mathcal{J}(\rho)=\int\rho^{\delta}\mu'(\rho)|\dxx\mu(\rho)|^2\,dx
-\frac{\delta(\delta-1)}{6}\int\rho^{\delta-2}\frac{|\dx\mu(\rho)|^4}{\mu'(\rho)}\,dx.
\end{equation}
Note that if $\delta=1$, then $\mathcal{J}(\rho)\geq 0$. Thus we can assume $\delta\not=1$ and by using Lemma \ref{lem:generalbernis} in \eqref{eq:sc2} we obtain that if 
\begin{equation}\label{eq:sc3}
\frac{(\delta-1)^2}{9}-\frac{\delta(\delta-1)}{6}\geq 0
\end{equation}
then $\mathcal{J}(\rho)\geq 0$. Then, noting that \eqref{eq:sc3} is equivalent to $-2\leq \delta\leq 1$ we have proved the sufficiency. The necessity follows by Proposition \ref{cor:bd} after choosing that $\mu$ and $k$ as in \eqref{eq:def1}. 
\end{proof}
Finally, we prove the corollary of Theorem \ref{teo:generalsc} which will be useful to derive uniform bounds on the approximating solutions.
\begin{corollary}\label{cor:gbd}
Let $\mu$ and $k$ as in Theorem \ref{teo:generalsc}. Assume that $-2<\delta<1$ and that $\rho|\mu''(\rho)|\leq C\mu'(\rho)$. Then, there exists a constant $c=c(\delta)>0$ such that
\begin{equation*}
\int\rho^{\delta}(\mu'(\rho))^{3}\left(|\dxx\rho|^2+\frac{|\dx\rho|^4}{\rho^2}\right)\,dx\leq c\, \mathcal{J}(\rho)
\end{equation*}
\end{corollary}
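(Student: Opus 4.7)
The plan is to reuse the identity
$$\mathcal{J}(\rho)=\int\rho^{\delta}\mu'(\rho)|\dxx\mu(\rho)|^2\,dx-\frac{\delta(\delta-1)}{6}\int\rho^{\delta-2}\frac{|\dx\mu(\rho)|^4}{\mu'(\rho)}\,dx$$
already derived in the proof of Theorem \ref{teo:generalsc}, together with Lemma \ref{lem:generalbernis}. Since $-2<\delta<1$ strictly, the coefficient $\frac{(\delta-1)^2}{9}-\frac{\delta(\delta-1)}{6}$ is strictly positive, so one extracts two separate lower bounds
$$\int\rho^{\delta}\mu'(\rho)|\dxx\mu(\rho)|^2\,dx\leq c_1\,\mathcal{J}(\rho),\qquad\int\rho^{\delta-2}\frac{|\dx\mu(\rho)|^4}{\mu'(\rho)}\,dx\leq c_2\,\mathcal{J}(\rho),$$
with constants depending only on $\delta$. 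This is the first step.

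Next, I would convert these bounds from quantities involving derivatives of $\mu(\rho)$ into the quantities in the statement, by using $\dx\mu(\rho)=\mu'(\rho)\dx\rho$. The $|\dx\rho|^4/\rho^2$ term is immediate, since
$$\int\rho^{\delta-2}\frac{|\dx\mu(\rho)|^4}{\mu'(\rho)}\,dx=\int\rho^{\delta}(\mu'(\rho))^3\frac{|\dx\rho|^4}{\rho^2}\,dx,$$
which is already $\leq c_2\,\mathcal{J}(\rho)$.

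For the second-derivative term, I would write $\dxx\mu(\rho)=\mu'(\rho)\dxx\rho+\mu''(\rho)|\dx\rho|^2$ and use $|a+b|^2\leq 2|a|^2+2|b|^2$ in the reverse direction to get
$$(\mu'(\rho))^2|\dxx\rho|^2\leq 2|\dxx\mu(\rho)|^2+2|\mu''(\rho)|^2|\dx\rho|^4.$$
Multiplying by $\rho^{\delta}\mu'(\rho)$ and integrating, the first piece is controlled by $2c_1\mathcal{J}(\rho)$ from the bound above. For the second piece, the hypothesis $\rho|\mu''(\rho)|\leq C\mu'(\rho)$ gives $|\mu''(\rho)|^2\leq C^2(\mu'(\rho))^2/\rho^2$, so
$$\int\rho^{\delta}\mu'(\rho)|\mu''(\rho)|^2|\dx\rho|^4\,dx\leq C^2\int\rho^{\delta-2}(\mu'(\rho))^3|\dx\rho|^4\,dx=C^2\int\rho^{\delta}(\mu'(\rho))^3\frac{|\dx\rho|^4}{\rho^2}\,dx,$$
and this is again controlled by $c_2\,\mathcal{J}(\rho)$ by the first step. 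Summing the two contributions yields the desired inequality.

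I do not anticipate a serious obstacle: the only subtle point is making sure the algebraic coefficient in Lemma \ref{lem:generalbernis} absorbs the indefinite sign of $-\delta(\delta-1)/6$ (which is automatic as soon as $\delta\in(-2,1)$), and handling the cross term from $(\dxx\mu(\rho))^2$ via the structural assumption on $\mu''$. Everything else is direct substitution.
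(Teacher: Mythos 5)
Your proposal is correct and follows essentially the same route as the paper: both start from the identity \eqref{eq:sc2} combined with Lemma \ref{lem:generalbernis} (giving the bound \eqref{eq:sc4} on $\int\rho^{\delta}\mu'(\rho)|\dxx\mu(\rho)|^2\,dx$ and $\int\rho^{\delta-2}|\dx\mu(\rho)|^4/\mu'(\rho)\,dx$ by $c\,\mathcal{J}(\rho)$), and then convert to the stated quantities using $\dx\mu(\rho)=\mu'(\rho)\dx\rho$ and the hypothesis $\rho|\mu''(\rho)|\leq C\mu'(\rho)$. The only cosmetic difference is that you control $(\mu'(\rho))^2|\dxx\rho|^2$ by the elementary bound $2|\dxx\mu(\rho)|^2+2|\mu''(\rho)|^2|\dx\rho|^4$, whereas the paper expands the square and absorbs the cross term via Young's inequality; both are valid and yield the same conclusion (with the constant depending on $C$ as well as $\delta$, exactly as in the paper's own argument).
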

\begin{proof}
Note that if $-2<\delta<1$ we can find a constant $c=c(\delta)>0$ such that 
\begin{equation}\label{eq:sc4}
\int\rho^{\delta}\mu'(\rho)|\dxx\mu(\rho)|^2\,dx
+\int\rho^{\delta-2}\frac{|\dx\mu(\rho)|^4}{\mu'(\rho)}\,dx\leq c\,\mathcal{J}(\rho). 
\end{equation}
Expanding the square in the first term we obtain 
\begin{equation*}
\begin{aligned}
&\rho^{\delta}\mu'(\rho)|\dxx\mu(\rho)|^2=\rho^{\delta}\mu'(\rho)|\dx(\mu'(\rho)\dx\rho)|^2\\
=&\rho^{\delta}[\mu'(\rho)]^3|\dxx\rho|^2
+\rho^{\delta}\mu'(\rho)[\mu''(\rho)]^2|\dx\rho|^4\\
+&2\rho^{\delta}\mu'(\rho)\mu'(\rho)\dxx\rho\mu''(\rho)|\dx\rho|^2.
\end{aligned}
\end{equation*} 
Thus, 
\begin{equation*}
\int\rho^{\delta}[\mu'(\rho)]^3|\dxx\rho|^2\,dx\leq c\,\mathcal{J}(\rho)+2\int\rho^{\delta}\mu'(\rho)\mu'(\rho)|\dxx\rho||\mu''(\rho)||\dx\rho|^2\,dx
\end{equation*}
By using that $|\rho\mu''(\rho)|\leq C\mu'(\rho)$ we get that
\begin{equation*}
\begin{aligned}
\int\rho^{\delta}[\mu'(\rho)]^3|\dxx\rho|^2\,dx&\leq c\,\mathcal{J}(\rho)+2\int\rho^{\delta}\mu'(\rho)\mu'(\rho)|\dxx\rho||\mu''(\rho)||\dx\rho|^2\,dx\\
&\leq c\,\mathcal{J}(\rho)+2C\int\rho^{\delta-1}[\mu'(\rho)]^3\dxx\rho|\dx\rho|^2\,dx\\
&=c\,\mathcal{J}(\rho)+2C\int\rho^{\frac{\delta}{2}}[\mu'(\rho)]^{\frac{3}{2}}\dxx\rho\,\,\frac{\rho^{\frac{\delta}{2}-1}|\dx\mu(\rho)|^2}{[\mu'(\rho)]^{\frac{1}{2}}}\,dx.
\end{aligned}
\end{equation*}
Thus, by Young's inequality we have 
\begin{equation*}
\begin{aligned}
\int\rho^{\delta}[\mu'(\rho)]^3|\dxx\rho|^2\,dx&\leq c\left(\mathcal{J}(\rho)+\int\frac{\rho^{\delta-2}|\dx\mu(\rho)|^4}{\mu'(\rho)}\right)\,dx,
\end{aligned}
\end{equation*}
and we can conclude. 
\end{proof}

\section{Finite energy weak solutions}\label{sec:weaksolution}
In this section we give the definition of finite energy weak solution. The system \eqref{eq:nsk} with $\mu(\rho)$ and $k(\rho)$ as in \eqref{eq:def1} reads as follows 
\begin{align}
&\dt\rho+\dx(\rho\,u)=0, \rho\geq 0\nonumber\\
&\dt(\rho\,u)+\dx(\rho\,u\,u)-\dx(\rho^{\alpha}\dx\,u)+\dx \rho^{\gamma}=\rho\dx\left(\dx(\rho^{\beta}\dx\rho)-\frac{\beta\rho^{\beta-1}}{2}|\dx\rho|^2\right).\label{eq:nsklim}
\end{align}
Formally, we have that 
\begin{align}
&\mathcal{E}(t)+\int_0^t\int\rho^{\alpha}|\dx\,u|^2\,dxds=\mathcal{E}(0),\label{eq:efor}\\
&\mathcal{F}(t)+\gamma\alpha\int_0^t\int\rho^{\alpha+\gamma-3}|\dx\rho|^2\,dxds+\int_0^t\mathcal{J}(\rho(s))\,ds=\mathcal{F}(0)\label{eq:bdfor}
\end{align}
where 
\begin{equation*}
\begin{aligned}
&\mathcal{E}(t):=\int\rho\frac{|u|^2}{2}+\frac{\rho^{\gamma}}{\gamma-1}+\rho^{\beta}|\dx\rho|^2\,dx,\quad
&\mathcal{F}(t):=\int\rho\frac{|w|^2}{2}+\frac{\rho^{\gamma}}{\gamma-1}+\rho^{\beta}|\dx\rho|^2\,dx,
\end{aligned}
\end{equation*}
$w:=u+\alpha\rho^{\alpha-2}\dx\rho$, and $\mathcal{J}(\rho)$ is defined in \eqref{eq:bdterm}. If $2\alpha-4<\beta<2\alpha-1$ and $\theta=\frac{\alpha+\beta+1}{2}\not=0$, the case $\theta=0$ can be treated with minor changes, by Proposition \ref{cor:bd} we have that $\mathcal{J}(\rho)>0$ and 
\begin{equation}\label{eq:bd1}
\iint|\dxx\rho^{\theta}|^2\,dx+\iint|\dx\rho^{\frac{\theta}{2}}|^4\,dx\leq C\mathcal{F}(0).
\end{equation}
 Moreover, there exist constants $\bar{\kappa}_1, \bar{\kappa}_2$ depending only on $\beta$ and $\kappa_1, \kappa_2$ depending on $\alpha$ and $\beta$ such that 
\begin{align}
\dx(\mathcal{K}(\rho,\dx\rho))&=\bar{\kappa}_1\dxx(\rho^{\frac{\beta}{2}+1}\dx\rho^{\frac{\beta}{2}+1})+\bar{\kappa}_2\dx(|\dx\rho^{\frac{\beta}{2}+1}|^2),\label{eq:kweak1}\\
\dx(\mathcal{K}(\rho,\dx\rho))&=\kappa_1\dxx(\rho^{\beta+2-\theta}\dx\rho^{\theta})+\kappa_2\dx(\rho^{\beta+2-\theta}|\dx\rho^{\frac{\theta}{2}}|^2),\label{eq:kweak}
\end{align}
Then, the Korteweg term can be equivalently defined in a weak sense by using \eqref{eq:kweak1} or \eqref{eq:kweak}. In the following definition we choose \eqref{eq:kweak}. 
\begin{definition}\label{def:ws}
We say that $(\rho, u, \mathcal{T})$ with $\rho\geq 0$ is a finite energy weak solution of \eqref{eq:nsk}-\eqref{eq:id} if the following conditions holds:
\begin{itemize}
\item[$(1)$] Integrability condition:
\begin{align*}
&\dx\rho^{\frac{\theta}{2}}\in L^{4}(0,T;L^{4}(\T))& &\rrho\,u\in L^{\infty}(0,T;L^{2}(\T))& &\partial_x\rho^{\alpha-\frac{1}{2}}\in L^{\infty}(0,T;L^{2}(\T))\\
&\rho\in L^{\infty}(0,T;L^{\infty}(\T))& &	\mathcal{T}\in L^{2}(0,T;L^{2}(\T))
\end{align*}
\item[$(2)$] Continuity equation:\\
\noindent For any $\psi\in C^{\infty}_{c}((0,T)\times\T)$ it holds that
\begin{equation*}
\iint\rho\dt\psi+\rrho\,\rrho\,u\dx\psi\,dxdt=0
\end{equation*}
\item[$(3)$] Momentum equation:\\
\noindent For any $\psi\in C^{\infty}_{c}((0,T)\times\T)$ it holds that
\begin{equation*}
\iint\rho\,u\dt\psi+\rho\,u\,u\dx\psi-\rho^{\frac{\alpha}{2}}\mathcal{T}\dx\psi+\rho^{\gamma}\dx\psi+\kappa_{1}\rho^{\beta+2-\frac{\theta}{2}}\dx\rho^{\frac{\theta}{2}}\dxx\psi+\kappa_{2}\rho^{\beta+2-\theta}|\dx\rho^{\frac{\theta}{2}}|^2\dx\psi\,dxdt=0
\end{equation*}
\item[$(4)$] Dissipation:\\
\noindent For any $\psi\in C^{\infty}_{c}((0,T)\times\T)$ it holds that
\begin{equation*}
\begin{aligned}
\iint\rho^{\frac{\alpha}{2}}\mathcal{T}\psi\,dxdt&=-\iint\rho^{\alpha-\frac{1}{2}}\rrho\,u\,\partial_x \psi\,dxdt-\frac{2\alpha}{2\alpha-1}\iint\dx\rho^{\alpha-\frac{1}{2}}\rrho\,u\psi\,dxdt.
\end{aligned}
\end{equation*}
\item[$(5)$] Energy inequality:\\
\noindent There exists a constant $C>0$ such that 
\begin{equation*}
\sup_{t}\int\,\rho|u|^2+|\dx\rho^{\frac{\beta}{2}+1}|^2+\rho^{\gamma}\,dx+ \iint \bigl|\mathcal{T}\bigr|^2\,dxdt\leq C
\end{equation*}
\item[$(6)$] BD Entropy\\
\noindent There exists a constant $C>0$ such that 
\begin{equation*}
\begin{aligned}
&\sup_{t}\int\,\rho|w|^2+|\dx\rho^{\frac{\beta}{2}+1}|^2+\rho^{\gamma}\,dxdt+\iint|\partial_x\rho^{\frac{\gamma+\alpha-1}{2}}|^{2}\,dxdt\\
&+\iint|\dxx\rho^{\theta}|^2\,dx+\iint|\dx\rho^{\frac{\theta}{2}}|^4\,dxdt\leq C
\end{aligned}
\end{equation*}
where $w := u + \alpha \rho^{\alpha-2} \partial_x \rho$.
\item[$(7)$] Initial datum:\\
\noindent The initial data are attained in the following sense when $t$ goes to $0$
\begin{equation*}
\begin{aligned}
&\rho(t)\to\rho_0\in L^{2}(\T),\\
&(\rho\,u)(t)\to \rho_0\,u_0\in L^{2}_{w}(\T).
\end{aligned}
\end{equation*}
\end{itemize}
\end{definition}
Some comments are in order
\begin{remark}\mbox{}

\begin{itemize}
\item[i)] Solutions can be also defined with minor change also when $\theta=0$. In the present, we do not consider this case since it only requires small changes in the proofs. 
\item[ii)] The velocity $u$ appearing in Definition \ref{def:ws} is not uniquely defined on the vacuum set $\{\rho=0\}$. Indeed, it is more reasonable to consider the quantity $\rrho\,u$ that it always well-defined, see Lemma \ref{lem:mom} below. This also motivates the definition of finite energy weak solutions. 
\item[iii)] The quantity $\mathcal{T}$ plays the role of $\rho^{\frac{\alpha}{2}}\dx u$, which again cannot be defined because $u$ is not defined on the vacuum region. In particular, item $(4)$ identifies $\rho^{\frac{\alpha}{2}}\dx u$ in terms of $\rho$ and $\rrho\,u$, see also \cite{LLX}.
\item[iv)] Note that while the BD Entropy is not needed to define the Korteweg tensor in a weak sense, see \eqref{eq:kweak1}, it is crucial to properly define the viscous tensor $\mathcal{T}$ because in item $(4)$ the term $\dx\rho^{\alpha-\frac{1}{2}}$ appears. 
\item[v)] Related to the previous comment and Remark \ref{rem:main}-iv), we note that if $\beta\leq -2$, the energy implies that the density is strictly positive, and thus weak solutions can be defined in the energy space. 
\end{itemize}
\end{remark}

\section{The approximating system}\label{sec:app}

We introduce the approximating system. For $\e>0$ we consider the viscosity and the capillarity coefficient 
\begin{equation}\label{eq:viscosity}
\mu_{\e}(\re):=\re^{\alpha}+\e\re^{\frac{1}{4}},
\end{equation}
and
\begin{equation}\label{eq:capillarity}
\ke(\re):=\frac{\,\re^{\delta}}{\alpha^2}(\me'(\re))^{2},
\end{equation}
with $\delta=\beta-2\alpha+2$. When $k$ is defined as in \eqref{eq:capillarity}, we have that \eqref{eq:nsk} reads as 
\begin{align}
\dt\re+\dx(\re\ue)&=0,\nonumber\\
\dt(\re\ue)+\dx(\re\ue\ue)&-\dx(\me(\re)\dx\ue)+\dx\re^{\gamma}=\frac{\re}{\alpha^2}\dx(\re^{\delta}\mu'_{\e}(\re)\dxx\mu_{\e}(\re))\label{eq:nska}\tag{NSKa}\\
&+\frac{\delta}{2\alpha^2}\re\dx(\re^{\delta-1}\dx\mu_{\e}(\re)\dx\mu_{\e}(\re)).\nonumber
\end{align}
The initial conditions on $\{t=0\}\times\T$ are
\begin{equation}\label{eq:ida}\tag{IDa}
\begin{aligned}
\re|_{t=0}&=\rho_0,\\
(\re\ue)|_{t=0}&=\rho_0\,u_0,
\end{aligned}
\end{equation}
and in space we impose periodic boundary conditions. The approximating system \eqref{eq:nska}-\eqref{eq:ida} admits a unique smooth global solution $(\re,\ue)$ for any fixed $\e>0$. Precisely, the following holds:
\begin{theorem}\label{teo:appex}
Let $k>2$ and assume that $\rho_0\in H^{k+1}(\T)$, $\rho_0\geq C>0$, and $u_0\in H^{k}(\T)$. Then, there exists a unique solution $(\re,\ue)$ such that 
\begin{equation*}
\begin{aligned}
&\re\in C([0,T];H^{k+1}(\T))\cap C^{1}((0,T);H^{k}(\T)),\\
&\ue\in C([0,T];H^{k}(\T))\cap C^{1}((0,T);H^{k-1}(\T)).
\end{aligned}
\end{equation*}
Moreover, there exists a constant $C_{\e}>0$ such that $\re\geq C_{\e}$. 
\end{theorem}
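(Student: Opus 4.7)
The plan is to combine a local-in-time well-posedness argument for a smooth, strictly positive density with uniform-in-time a priori bounds coming from the approximate energy, the approximate BD entropy, and higher-order energy estimates, then continue to arbitrary times. The mollification built into $\mu_\e$ is precisely what makes this possible: the term $\e\re^{1/4}$ provides genuine (non-degenerate) viscosity as long as $\re$ stays away from zero, and the accompanying modification of $k_\e$ preserves the strong coercivity condition via Theorem \ref{teo:generalsc}, since $\delta=\beta-2\alpha+2\in[-1,1)$ under the hypotheses \eqref{eq:condnew}.

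\textbf{Local existence.} Since $\rho_0\geq C>0$ is in $H^{k+1}$ and $u_0\in H^k$ with $k>2$, I would set up a fixed-point iteration in $X_T:=C([0,T];H^{k+1}(\T))\times C([0,T];H^k(\T))$ for small $T$: given $(\bar\re,\bar u)$ with $\bar\re\geq C/2$, solve the continuity equation $\dt\re+\dx(\re\bar u)=0$ with initial datum $\rho_0$ (linear transport, preserves regularity and positivity by the method of characteristics), and then linearize the momentum equation around $(\bar\re,\bar u)$. The highest-order terms are the viscous one $-\dx(\mu_\e(\bar\re)\dx u)$ and the capillary one, which (after using the continuity equation to rewrite $\re\,\dx(\cdots)$ as a linear fourth-order operator acting on $\re$) gives a parabolic-hyperbolic coupled problem whose linearization is solvable by standard Galerkin / semigroup methods because $\mu_\e(\bar\re)\geq \e(C/2)^{1/4}>0$. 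A standard contraction argument on a short interval gives a unique solution $(\re,\ue)\in X_{T_\e^\ast}$ with $\re\geq C_\e^\ast>0$ on a maximal interval $[0,T_\e^\ast)$.

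\textbf{A priori energy and BD entropy estimates.} Because the constructed solution is smooth with $\re$ strictly positive, all formal manipulations leading to the energy identity \eqref{eq:efor} and the BD entropy \eqref{eq:bdfor}, written for the approximate coefficients $\mu_\e,k_\e$, are fully justified. The choice $k_\e=\re^\delta(\mu_\e'(\re))^2/\alpha^2$ with $\delta\in[-1,1)$ gives $\mathcal{J}_\e(\re)\geq 0$ by Theorem \ref{teo:generalsc}, while Corollary \ref{cor:gbd} yields the coercive estimate controlling $\int \re^\delta[\mu_\e'(\re)]^3(|\dxx\re|^2+|\dx\re|^4/\re^2)\,dx$ by $\mathcal{J}_\e(\re)$. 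Hence on $[0,T_\e^\ast)$ the quantities $\mathcal{E}_\e(t)$ and $\mathcal{F}_\e(t)$ remain bounded by $\mathcal{E}_\e(0)+\mathcal{F}_\e(0)$, uniformly in $t$.

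\textbf{Positive lower bound on $\re$.} Let $\phi_\e$ be a primitive of $\mu_\e'(\rho)/\rho$. Since the $\e\re^{1/4}$ perturbation contributes $\frac{\e}{4}\rho^{-7/4}$ to $\phi_\e'$, the function $\Phi_\e$ defined by $\Phi_\e'(\rho)=\sqrt{\rho}\,\phi_\e'(\rho)$ satisfies $\Phi_\e(\rho)\sim -c\,\e\,\rho^{-1/4}$ as $\rho\to 0^+$. The BD entropy gives $\sqrt{\re}(\ue+\phi_\e'(\re)\dx\re)\in L^\infty_t L^2_x$, the energy gives $\sqrt{\re}\,\ue\in L^\infty_t L^2_x$, so $\dx\Phi_\e(\re)=\sqrt{\re}\,\phi_\e'(\re)\dx\re\in L^\infty_t L^2_x$. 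Combined with control of $\int\re\,dx$ from conservation of mass, one-dimensional Sobolev embedding yields $\Phi_\e(\re)\in L^\infty_{t,x}$, and the blow-up of $\Phi_\e$ at zero forces $\re(t,x)\geq C_\e>0$ uniformly on $[0,T_\e^\ast)$.

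\textbf{Higher-order estimates and globalization.} With $\re$ pinched between two positive constants and bounded in $L^\infty$ (the upper bound following from $\re^\gamma\in L^\infty_t L^1_x$ together with the capillary control of $\dx\re^{\beta/2+1}$ in $L^\infty_t L^2_x$ and the one-dimensional embedding), the system is a uniformly parabolic quasilinear problem with smooth coefficients. Differentiating the equations up to order $k$ and performing standard energy estimates (commuting derivatives, using product estimates in $H^s(\T)$, and absorbing the top-order terms into the parabolic and capillary dissipations) yields a Gronwall-type inequality for $\|\re\|_{H^{k+1}}+\|\ue\|_{H^k}$ which remains finite on any bounded interval. Uniqueness for the full system follows from the same type of estimate applied to the difference of two solutions. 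Together with the lower bound $\re\geq C_\e$, this rules out blow-up of the Sobolev norms and excludes the formation of vacuum in finite time, so the maximal interval is $[0,T]$ for any given $T>0$. The main technical obstacle is keeping track of all the commutator and nonlinear product terms in the higher-order estimates; however this is routine in 1D since $H^k\hookrightarrow C^{k-1}$ is an algebra for $k>2$, and the smoothness of $\mu_\e$, $k_\e$ and $\rho\mapsto\rho^\gamma$ on the interval $[C_\e,\|\re\|_{L^\infty}]$ makes every nonlinearity Lipschitz in the relevant norms.
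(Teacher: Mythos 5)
Your overall architecture (local well-posedness for strictly positive smooth data, then the $\e$-level energy, BD entropy and vacuum bounds of Propositions \ref{prop:energy}--\ref{prop:vacuum}, then higher-order estimates and continuation) matches the paper's appendix, and your lower/upper bound argument for $\re$ via $\dx\Phi_\e(\re)\in L^\infty_tL^2_x$ and mass conservation is essentially Proposition \ref{prop:vacuum}. The genuine gap is in the higher-order estimates. You claim the top-order terms can be ``absorbed into the parabolic and capillary dissipations'' and that this is routine once $\re$ is pinched between positive constants. But the Korteweg term is conservative, not dissipative: it contributes no sign-definite term at the $H^k$ level, and it costs three derivatives of $\rho$. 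Concretely, when you estimate $\|\re\|_{H^{k+1}}$ from the continuity equation you meet a term of the type $\int \re\,\dx^{k+1}\re\,\dx^{k+2}\ue\,dx$, which is not controlled by the viscous dissipation $\int\mu_\e(\re)|\dx^{k+1}\ue|^2dx$ (integration by parts only trades it for $\int\re\,\dx^{k+2}\re\,\dx^{k+1}\ue\,dx$, which exceeds the available regularity of $\re$); symmetrically, the $\dx^k$-differentiated capillary term tested against $\dx^k\ue$ requires $k+3$ derivatives of $\re$. A naive Gronwall inequality for $\|\re\|_{H^{k+1}}+\|\ue\|_{H^k}$ therefore does not close, no matter how smooth and nondegenerate the coefficients are, and boundedness of $\re$ away from zero does not remove this derivative loss.

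The missing idea, which the paper stresses as crucial, is the change of variables $A=\sqrt{k(\rho)/\rho}\,\dx\rho$, $\tilde\mu(\rho)=\sqrt{\rho k(\rho)}$, $\bar\mu(\rho)=\mu(\rho)/\rho$, which recasts \eqref{eq:nskapp} as the augmented system \eqref{eq:sysa} in $(A,u)$: there the highest-order capillary coupling appears through the pair $\dx(\tilde\mu(\rho)\dx u)$ in the $A$-equation and $-\dx(\tilde\mu(\rho)\dx A)$ in the $u$-equation, and this skew-symmetric structure makes the problematic top-order contributions cancel in the $H^k$ energy estimate \eqref{eq:hs}; the viscosity then only has to control the remaining lower-order commutators, leading to the blow-up criterion \eqref{eq:chartime} and to the bootstrap $L^\infty_tH^1\to L^2_tH^2\to L^\infty_tH^2$ for $(A,u)$ carried out in the appendix. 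Equivalently, one could keep $(\rho,u)$ but must then work with a weighted energy encoding the same cancellation; either way, some version of this antisymmetry argument (in the spirit of \cite{BGDD}) has to be made explicit, and your proposal as written would fail at precisely this step. Your uniqueness claim inherits the same issue, since the stability estimate for differences of solutions relies on the same structure.
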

We will not give the full proof of Theorem \ref{teo:appex} since it is quite standard. However, we provide a sketch in the Appendix \ref{app}.\\

We prove estimates uniform in $\e$ which will be needed in the proof of the main theorem. We start with the energy estimate.
\begin{proposition}\label{prop:energy}
Let $(\ue,\re)$ be a smooth solution of \eqref{eq:nska}-\eqref{eq:ida} with $\re$ bounded away from zero. Then, for any $t\in(0,T)$ 
\begin{equation}\label{eq:e}
\mathcal{E}_{\e}(t)+\int_0^t\int\mu_{\e}(\re)|\dx\ue|^2\,dxds=\mathcal{E}_{\e}(0)
\end{equation}
where 
\begin{equation*}
\mathcal{E}_{\e}(t):=\int\,\re\frac{|\ue|^2}{2}+\frac{\re^{\gamma}}{\gamma-1}+\frac{\re^{\delta}(\mu'_{\e}(\re))^2|\dx\re|^2}{\alpha^2}\,dx.
\end{equation*}
\end{proposition}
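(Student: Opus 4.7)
\medskip
\noindent\textbf{Proof plan.} Since Theorem \ref{teo:appex} guarantees a smooth solution with $\re$ bounded away from zero, every integration by parts below is fully rigorous, and the proof reduces to a classical energy computation. The plan is to multiply the momentum equation in \eqref{eq:nska} by $\ue$, integrate over $\T$, and read off the identity \eqref{eq:e} term by term. For the transport part, combining $\int_{\T}\ue[\dt(\re\ue)+\dx(\re\ue^{2})]\,dx$ with the continuity equation (after the symmetric manipulation that relies on $\dt\re+\dx(\re\ue)=0$) collapses the inertial contribution into the clean derivative $\frac{d}{dt}\int_{\T}\re\frac{|\ue|^{2}}{2}\,dx$. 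The pressure term is handled by $\int_{\T}\ue\,\dx\re^{\gamma}\,dx=-\int_{\T}\re^{\gamma}\dx\ue\,dx$, and then using the continuity equation once more rewrites $-\int_{\T}\re^{\gamma}\dx\ue\,dx$ as $\frac{d}{dt}\int_{\T}\frac{\re^{\gamma}}{\gamma-1}\,dx$. The viscous term integrates by parts immediately to the nonnegative dissipation $\int_{\T}\mu_{\e}(\re)|\dx\ue|^{2}\,dx$ appearing in \eqref{eq:e}.

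\medskip
The central step is the Korteweg contribution. Using exactly the identity derived inside the proof of Theorem \ref{teo:generalsc}, I would rewrite the right-hand side of the momentum equation as $\re\,\dx\phi_{\e}$ with
\begin{equation*}
\phi_{\e}:=\dx(\ke(\re)\dx\re)-\frac{k'_{\e}(\re)}{2}|\dx\re|^{2}=\frac{\re^{\delta}}{\alpha^{2}}\mu'_{\e}(\re)\dxx\mu_{\e}(\re)+\frac{\delta\,\re^{\delta-1}}{2\alpha^{2}}|\dx\mu_{\e}(\re)|^{2},
\end{equation*}
since $\ke$ has the form \eqref{eq:general} required in Theorem \ref{teo:generalsc}. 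Multiplying by $\ue$, integrating by parts and invoking the continuity equation yields
\begin{equation*}
\int_{\T}\ue\,\re\,\dx\phi_{\e}\,dx=-\int_{\T}\phi_{\e}\,\dx(\re\ue)\,dx=\int_{\T}\phi_{\e}\,\dt\re\,dx.
\end{equation*}
Expanding $\phi_{\e}$ and integrating by parts in space once more, the first piece gives $-\int_{\T}\ke(\re)\dx\re\,\dx\dt\re\,dx$, which together with the chain-rule identity $\frac{d}{dt}\int_{\T}\ke(\re)\frac{|\dx\re|^{2}}{2}\,dx=\int_{\T}\ke(\re)\dx\re\,\dx\dt\re\,dx+\int_{\T}\frac{k'_{\e}(\re)}{2}\dt\re\,|\dx\re|^{2}\,dx$ causes the two terms involving $k'_{\e}(\re)\dt\re|\dx\re|^{2}$ to cancel exactly. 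What remains is precisely $-\frac{d}{dt}\int_{\T}\ke(\re)\frac{|\dx\re|^{2}}{2}\,dx$, which, moved to the left-hand side, reconstitutes the capillary part of $\mathcal{E}_{\e}$.

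\medskip
Assembling the four contributions produces
\begin{equation*}
\frac{d}{dt}\mathcal{E}_{\e}(t)+\int_{\T}\mu_{\e}(\re)|\dx\ue|^{2}\,dx=0,
\end{equation*}
and a time integration on $[0,t]$ yields \eqref{eq:e}. The only delicate point is bookkeeping of signs in the Korteweg computation, specifically making sure that the two $k'_{\e}$-terms cancel; everything else is routine because smoothness and the pointwise lower bound $\re\ge C_{\e}$ from Theorem \ref{teo:appex} make the manipulations classical and no uniformity in $\e$ is required at this stage.
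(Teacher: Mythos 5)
Your argument is correct and is essentially the paper's proof reorganized: the paper derives the same identity by multiplying the momentum equation by $\ue$ and the continuity equation separately by $\frac{\gamma\re^{\gamma-1}}{\gamma-1}$ and by $-\bigl(\re^{\delta}\mu'_{\e}(\re)\dxx\mu_{\e}(\re)+\tfrac{\delta}{2}\re^{\delta-1}|\dx\mu_{\e}(\re)|^2\bigr)$ and then summing so that the cross terms cancel, which is precisely what your substitution $\dt\re=-\dx(\re\ue)$ combined with the chain-rule identity accomplishes. The only discrepancy is that your (correct) computation produces the capillary energy $\int\ke(\re)\tfrac{|\dx\re|^2}{2}\,dx=\int\tfrac{\re^{\delta}(\mu'_{\e}(\re))^2|\dx\re|^2}{2\alpha^2}\,dx$, whereas the statement of the proposition and the paper's own intermediate identity carry the coefficient $\tfrac{1}{\alpha^2}$ rather than $\tfrac{1}{2\alpha^2}$ --- a harmless factor-of-two normalization slip in the paper itself, not a gap in your proof.
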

\begin{proof}
By multiplying the first equation of \eqref{eq:nska} by $\ue$, using the first equation and integrating in space we have 
\begin{equation}\label{eq:e1}
\begin{aligned}
&\frac{d}{dt}\int\re\frac{|\ue|^2}{2}\,dx+\int\mu_{\e}(\re)|\dx\ue|^2\,dx
+\int\dx\re^{\gamma}\ue\,dx\\
+&\frac{1}{\alpha^2}\int\dx(\re\ue)[\re^{\delta}\mu'_{\e}(\re)\dxx\mu_{\e}(\re)+\frac{\delta}{2}\re^{\delta-1}\dx\mu_{\e}(\re)\dx\mu_{\e}(\re)]\,dx=0.
\end{aligned}
\end{equation}
Next, by multiplying the first equation by $\frac{\gamma\re^{\gamma-1}}{\gamma-1}$ and integrating in space we get 
\begin{equation}\label{eq:e2}
\frac{d}{dt}\int\frac{\re^{\gamma}}{\gamma-1}\,dx-\int\dx\re^{\gamma}\ue\,dx=0,
\end{equation}
while, by multiplying again the first equation by $-\re^{\delta}\mu'_{\e}(\re)\dxx\mu_{\e}(\re)-\frac{\delta}{2}\re^{\delta-1}\dx\mu_{\e}(\re)\dx\mu_{\e}(\re)$ and integrating in space we obtain 
\begin{equation}\label{eq:e3}
\frac{d}{dt}\int\frac{\re^{\delta}(\mu'_{\e}(\re))^2|\dx\re|^2}{\alpha^2}\,dx
-\frac{1}{\alpha^2}\int\dx(\re\ue)[\re^{\delta}\mu'_{\e}(\re)\dxx\mu_{\e}(\re)+\frac{\delta}{2}\re^{\delta-1}\dx\mu_{\e}(\re)\dx\mu_{\e}(\re)]\,dx=0.
\end{equation}
Summing up \eqref{eq:e1}, \eqref{eq:e2}, and \eqref{eq:e3},after integrating in time we obtain \eqref{eq:e}.
\end{proof}
Next, we show that the BD Entropy holds.
\begin{proposition}\label{prop:bdentropy}
Let $(\ue,\re)$ be a smooth solution of \eqref{eq:nska}-\eqref{eq:ida} with $\re$ bounded away from zero. Define $\we:=\ue+\dx\phi_{\e}(\re)$ with $\re\phi'_{\e}(\re)=\me'(\re)$. Then, for any $t\in(0,T)$ 
\begin{equation}\label{eq:bdprop}
\mathcal{F}_{\e}(t)+\gamma\int_0^t\int\mu'_{\e}(\re)\re^{\gamma-1}|\dx\re|^2\,dxds+\int_0^t\mathcal{J}(\re(s))\,ds=\mathcal{F}_{\e}(0),
\end{equation}
where 
\begin{equation*}
\mathcal{F}_{\e}(t)=\int\re\frac{|\we|^2}{2}+\re|\dx\phi_{\e}(\re)|^2+\frac{\re^{\gamma}}{\gamma-1}+\frac{\re^{\delta}(\mu'_{\e}(\re))^2|\dx\re|^2}{\alpha^2}\,dx.
\end{equation*}
\end{proposition}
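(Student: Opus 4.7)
The plan is to derive the BD entropy identity \eqref{eq:bdprop} by producing an evolution equation for the effective velocity $\we = \ue + \dx \phi_{\e}(\re)$ and testing it against $\we$. The key algebraic identity is $\re\,\dx \phi_{\e}(\re) = \dx \me(\re)$, which follows from the defining relation $\re\,\phi_{\e}'(\re) = \me'(\re)$.

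First, I would use the continuity equation to derive
\[
\dt \dx \phi_{\e}(\re) + \dx[\ue\, \dx \phi_{\e}(\re)] + \dx[\me'(\re) \dx \ue] = 0.
\]
Multiplying by $\re$ and using the continuity equation once more yields a conservative equation for $\re\, \dx \phi_{\e}(\re)$. Adding it to the momentum equation in \eqref{eq:nska} produces an evolution equation for $\re \we$ in which the viscous dissipation in $\ue$ has been eliminated and the resulting right-hand side contains only the pressure and the Korteweg contributions.

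Next, I would test the resulting equation against $\we$ and integrate in space, combining with the continuity equation multiplied by $|\we|^2/2$ to obtain $\frac{d}{dt} \int \re |\we|^2/2 \, dx$ on the left-hand side. The remaining contributions are handled by decomposing $\we = \ue + \dx \phi_{\e}(\re)$: (i) the pressure term yields $\frac{d}{dt} \int \re^\gamma/(\gamma-1) \, dx$ from the $\ue$ part (via the continuity equation), plus the positive dissipation $\gamma \int \me'(\re) \re^{\gamma-2} |\dx \re|^2 \, dx$ from the $\dx \phi_{\e}(\re)$ part (after one integration by parts); (ii) the Korteweg term yields the time derivative of the capillary part of $\mathcal{F}_{\e}$ from the $\ue$ part (exactly as in the proof of Proposition \ref{prop:energy}), plus, after using $\re \dx \phi_{\e}(\re) = \dx \me(\re)$ and one integration by parts, exactly the functional $\mathcal{J}(\re)$.

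Integrating in time then gives \eqref{eq:bdprop}. The main obstacle is the bookkeeping for the Korteweg term in item (ii): one must verify that the $\dx \phi_{\e}(\re)$ contribution matches exactly $\mathcal{J}(\re)$ as defined in \eqref{eq:bdterm}, and that the $\ue$ part correctly reconstructs the capillary component of $\mathcal{F}_{\e}$. Both reduce to the identity $\dx \mathcal{K}(\rho, \dx \rho) = \rho \dx[\dx(k(\rho) \dx \rho) - k'(\rho) |\dx \rho|^2/2]$ stated in the introduction together with the 1D relation $\re \dx \phi_{\e}(\re) = \dx \me(\re)$.
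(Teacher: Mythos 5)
Your overall route is the same as the paper's: the paper rewrites \eqref{eq:nska} in the variables $(\re,\we)$ (its system \eqref{eq:nskaw}), tests the momentum part with $\we$, and multiplies the continuity part by $\tfrac{\gamma\re^{\gamma-1}}{\gamma-1}$ and by the capillary chemical potential; your decomposition $\we=\ue+\dx\phi_{\e}(\re)$ inside the pressure and Korteweg terms is the same bookkeeping arranged slightly differently, and your identification of the Korteweg cross term with $\mathcal{J}(\re)$ through $\re\dx\phi_{\e}(\re)=\dx\me(\re)$ is exactly the paper's mechanism.

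The genuine gap is your assertion that adding the equation for $\re\dx\phi_{\e}(\re)$ to the momentum equation eliminates the viscous term. With the stated normalization $\re\phi'_{\e}(\re)=\me'(\re)$ one has the exact identity
\begin{equation*}
\dt\bigl(\re\dx\phi_{\e}(\re)\bigr)+\dx\bigl(\re\ue\dx\phi_{\e}(\re)\bigr)=-\dx\bigl(\re\me'(\re)\dx\ue\bigr),
\end{equation*}
so the sum with the momentum equation is
\begin{equation*}
\dt(\re\we)+\dx(\re\ue\we)+\dx\re^{\gamma}=\text{(Korteweg terms)}+\dx\bigl((\me(\re)-\re\me'(\re))\dx\ue\bigr),
\end{equation*}
and $\me(\re)-\re\me'(\re)=(1-\alpha)\re^{\alpha}+\tfrac{3\e}{4}\re^{\frac14}$ does not vanish unless $\alpha=1$ and $\e=0$. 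Testing with $\we$ therefore leaves the extra contribution $-\int(\me(\re)-\re\me'(\re))\dx\ue\,\dx\we\,dx$, which appears nowhere in \eqref{eq:bdprop}. Exact cancellation of the viscous flux would instead require $\phi'_{\e}(\re)=\me(\re)/\re^{2}$; but with that choice $\re\dx\phi_{\e}\neq\dx\me(\re)$, the Korteweg cross term no longer reduces to $\mathcal{J}(\re)$ (which is built on $\dx\me(\re)$), and the pressure cross term no longer gives $\gamma\int\me'(\re)\re^{\gamma-2}|\dx\re|^{2}\,dx$. So, as written, your central step does not close: you must either justify the cancellation (it holds only if the viscous flux is $\re\me'(\re)\dx\ue$ rather than $\me(\re)\dx\ue$) or carry the residual term through the computation. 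To be fair, the paper's displayed system \eqref{eq:nskaw} is algebraically equivalent to your ``cancelled'' equation and is justified there only by ``a direct calculation'', so the paper passes over the same delicate point; but since this cancellation is the whole content of the BD entropy, a proof has to verify it rather than assert it, and with the definition of $\phi_{\e}$ given in the statement the verification fails.
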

\begin{proof}
We first note that by a direct calculation $(\re,\we)$ solves 
\begin{align}
\dt\re&+\dx(\re\we)=\dxx\mu_{\e}(\re),\nonumber\\
\dt(\re\we)&+\dx(\re\we\we)-\dxx(\mu_{\e}(\re)\we)+\dx(\me(\re)\dx\we)+\dx\re^{\gamma}\label{eq:nskaw}\tag{NSKaw}\\
&=\frac{\re}{\alpha^2}\dx(\re^{\delta}\mu'_{\e}(\re)\dxx\mu_{\e}(\re))+\frac{\delta}{2\alpha^2}\re\dx(\re^{\delta-1}\dx\mu_{\e}(\re)\dx\mu_{\e}(\re)).\nonumber
\end{align}
By multiplying the second equation by $\we$, using the first equation and integrating in space we obtain 
\begin{equation}\label{eq:bdp1}
\begin{aligned}
&\frac{d}{dt}\int\re\frac{|\we|^2}{2}\,dx+\int\dx\re^{\gamma}\we\,dx\\
&+\frac{1}{\alpha^2}\int\dx(\re\we)[\re^{\delta}\mu'_{\e}(\re)\dxx\mu_{\e}(\re)+\frac{\delta}{2}\re^{\delta-1}\dx\mu_{\e}(\re)\dx\mu_{\e}(\re)]\,dx=0.
\end{aligned}
\end{equation}
Next, by multiplying the first equation by $\frac{\gamma\re^{\gamma-1}}{\gamma-1}$ we have 
\begin{equation}\label{eq:bdp2}
\frac{d}{dt}\int\frac{\re^{\gamma}}{\gamma-1}\,dx-\int\dx\re^{\gamma}\we\,dx+\gamma\int\mu'_{\e}(\re)\re^{\gamma-1}|\dx\re|^2\,dx=0.
\end{equation}
Finally, by multiplying again the first equation by $-\re^{\delta}\mu'_{\e}(\re)\dxx\mu_{\e}(\re)-\frac{\delta}{2}\re^{\delta-1}\dx\mu_{\e}(\re)\dx\mu_{\e}(\re)$ and integrating in space we obtain 
\begin{equation}\label{eq:bdp3}
\begin{aligned}
&\frac{d}{dt}\int\frac{\re^{\delta}(\mu'_{\e}(\re))^2|\dx\re|^2}{\alpha^2}\,dx+\mathcal{J}(\re(t))\\
&-\frac{1}{\alpha^2}\int\dx(\re\we)[\re^{\delta}\mu'_{\e}(\re)\dxx\mu_{\e}(\re)+\frac{\delta}{2}\re^{\delta-1}\dx\mu_{\e}(\re)\dx\mu_{\e}(\re)]\,dx=0.
\end{aligned}
\end{equation}
Thus, by summing up \eqref{eq:bdp1}, \eqref{eq:bdp2}, and \eqref{eq:bdp3}, after integrating in time we get \eqref{eq:bdprop}.
\end{proof}
The following Proposition is one of the main consequence of the BD Entropy. 
\begin{proposition}\label{prop:vacuum}
Let $(\ue,\re)$ be a smooth solution of \eqref{eq:nska}-\eqref{eq:ida} with $\re$ bounded away from zero. Then, 
there exists a constant $C>0$ independent on $\e$ such that 
\begin{equation}\label{eq:v}
\e\|\re^{-\frac{1}{4}}\|_{L^{\infty}_{t,x}}+\|\re\|_{L^{\infty}_{t,x}}\leq C. 
\end{equation}
\end{proposition}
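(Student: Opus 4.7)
The proof will use the BD entropy of Proposition \ref{prop:bdentropy} uniformly in $\e$, picking from it two separate derivative bounds that together pin down the density from below and above. Since $\delta=\beta-2\alpha+2\in[-1,1)$ under \eqref{eq:condnew}, Theorem \ref{teo:generalsc} ensures $\mathcal{J}(\re)\geq 0$, so $\sup_t\mathcal{F}_\e(t)\leq \mathcal{F}_\e(0)\leq C$ uniformly in $\e$ (this uses that the approximating initial data are chosen to have $\mathcal{F}_\e(0)$ bounded independent of $\e$). In particular, the two nonnegative pieces $\int\re|\dx\phi_\e(\re)|^2\,dx$ and $\int\re^{\delta}(\mu_\e'(\re))^2|\dx\re|^2/\alpha^2\,dx$ sitting inside $\mathcal{F}_\e$ are each bounded uniformly in $t$ and $\e$. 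The plan is to extract from them $L^\infty_tL^2_x$ estimates on $\dx\re^{-1/4}$ (with an $\e^{-1}$ weight) and on $\dx\re^{\beta/2+1}$, then pass to $L^\infty_{t,x}$ using mass conservation and one-dimensional Sobolev embedding.

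For the first extraction I will use that $\phi_\e'(\re)=\mu_\e'(\re)/\re=\alpha\re^{\alpha-2}+(\e/4)\re^{-7/4}$ is a sum of two nonnegative terms, so $(\phi_\e'(\re))^2\geq (\e/4)^2\re^{-7/2}$ and hence the pointwise inequality $\re|\dx\phi_\e(\re)|^2\geq \e^2|\dx\re^{-1/4}|^2$. For the second, using $\mu_\e'(\re)\geq \alpha\re^{\alpha-1}$ together with $\delta=\beta-2\alpha+2$ and the identity $\re^{\beta}|\dx\re|^2=(\beta/2+1)^{-2}|\dx\re^{\beta/2+1}|^2$, which is valid because $\beta>-2$ is part of \eqref{eq:condnew}, I get the lower bound $\re^{\delta}(\mu_\e'(\re))^2|\dx\re|^2/\alpha^2 \geq (\beta/2+1)^{-2}|\dx\re^{\beta/2+1}|^2$. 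Integrating these two pointwise inequalities yields the uniform bounds $\|\dx\re^{-1/4}(t,\cdot)\|_{L^2_x}\leq C/\e$ and $\|\dx\re^{\beta/2+1}(t,\cdot)\|_{L^2_x}\leq C$.

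To close, mass conservation $\int_\T\re(t,x)\,dx\equiv M$ produces, for each $t$, a point $x_0(t)\in\T$ with $\re(t,x_0)=M/|\T|$, so that $\re^{-1/4}$ and $\re^{\beta/2+1}$ take $\e$-independent values at $x_0(t)$. The elementary one-dimensional inequality $\|f\|_{L^\infty_x}\leq |f(x_0)|+|\T|^{1/2}\|\dx f\|_{L^2_x}$, coming from the fundamental theorem of calculus and Cauchy--Schwarz, then gives $\|\re^{-1/4}\|_{L^\infty_{t,x}}\leq C+C/\e$ and $\|\re^{\beta/2+1}\|_{L^\infty_{t,x}}\leq C$; multiplying the former by $\e$ and using $\beta/2+1>0$ to invert the latter produces \eqref{eq:v}. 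The only conceptually nontrivial step is the first extraction: the perturbation $\e\re^{1/4}$ in the approximating viscosity \eqref{eq:viscosity} was specifically designed so that $\mu_\e'(\re)$ contains the summand $(\e/4)\re^{-3/4}$ which, via $\phi_\e'=\mu_\e'/\re$ and the BD entropy, converts into quantitative nonvacuum information on $\re$. Without this perturbation the BD entropy would be silent on the vacuum set and the argument would collapse; once it is in place, the remaining steps are routine one-dimensional analysis.
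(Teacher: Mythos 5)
Your proof is correct and follows essentially the same strategy as the paper: the uniform BD entropy bound yields $L^{\infty}_t L^{2}_x$ control of $\e\dx\re^{-\frac{1}{4}}$ and of the gradient of a positive power of $\re$, which combined with conservation of the mean (giving a point where $\re$ takes an $\e$-independent positive value) and the one-dimensional fundamental-theorem-of-calculus/Cauchy--Schwarz estimate produces \eqref{eq:v}. The only, harmless, difference is that for the upper bound you exploit the capillarity term of $\mathcal{F}_{\e}$ to control $\dx\re^{\frac{\beta}{2}+1}$ (using $\beta>-2$), whereas the paper extracts $\dx\re^{\alpha-\frac{1}{2}}$ from the $\re|\dx\phi_{\e}(\re)|^{2}$ term (using $\alpha>\tfrac{1}{2}$); both exponents are positive under \eqref{eq:condnew}, so the two variants are equivalent here.
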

\begin{proof}
From \eqref{eq:bdprop} we can infer that for some constant $C>0$ independent on $\e$ we have that 
\begin{equation*}
\sup_{t}\int\frac{(\mu'_{\e}(\re))^2}{\re}|\dx\re|^2\,dx\leq C.
\end{equation*}
By using \eqref{eq:viscosity} we deduce that 
\begin{equation*}
\begin{aligned}
&\sup_{t}\int\re^{2\alpha-3}|\dx\re|^2\,dx\leq C,& 
&\sup_{t}\,\e^2\int\re^{-\frac{5}{2}}|\dx\re|^2\,dx\leq C,
\end{aligned}
\end{equation*}
which easily imply that 
\begin{equation}\label{eq:v1}
\begin{aligned}
&\sup_{t}\int|\dx\re^{\alpha-\frac{1}{2}}|^2\,dx\leq C,& 
&\sup_{t}\,\e^2\int|\dx\re^{-\frac{1}{4}}|^2\,dx\leq C.
\end{aligned}
\end{equation}
Let $M$ be the average of $\rho_0$. Since for any $t\in(0,T)$ 
the average of $\re$ is conserved and it is equal to $M$ we can infer that there exists a constant $C=C(T,M,\mathcal{E}(0))>0$ such that for any $t\in(0,T)$ there exists $x_{t}\in\T$ with the property that 
\begin{equation}\label{eq:v2}
\frac{1}{C}\leq \re(t,x_t)\leq C.
\end{equation}
Therefore, for any $t\in(0,T)$ and $x\in\T$ 
\begin{equation}\label{eq:v3}
\begin{aligned}
\re^{\alpha-\frac{1}{2}}(t,x)&\leq \re^{\alpha-\frac{1}{2}}(t,x_t)+\int_{x_t}^x|\dx\re^{\alpha-\frac{1}{2}}(t,y)|\,dy\\
&\leq \re^{\alpha-\frac{1}{2}}(t,x_t)+\left(\int\,|\dx\re^{\alpha-\frac{1}{2}}(t,y)|^2\,dy\right)^{\frac{1}{2}}\leq C
\end{aligned}
\end{equation}
where in the last inequality we used \eqref{eq:v1} and \eqref{eq:v2}. By the very same argument we get that 
\begin{equation*}
\re^{-\frac{1}{4}}(t,x)\leq \re^{-\frac{1}{4}}(t,x_t)+\int_{x_t}^x|\dx\re^{-\frac{1}{4}}(t,y)|\,dy,
\end{equation*}
and thus by \eqref{eq:v2} and \eqref{eq:v1} we obtain 
\begin{equation}\label{eq:v4}
\e|\re^{-\frac{1}{4}}(t,x)|\leq C+\left(\e^2\int|\dx\re^{-\frac{1}{4}}(t,y)|^2\,dy\right)^{\frac{1}{2}}\leq C.
\end{equation}
Then, \eqref{eq:v3} and \eqref{eq:v4} give \eqref{eq:v}.
\end{proof}
\section{Proof of the main result}\label{sec:proof}
In this section we give the proof of Theorem \ref{teo:main}. We recall that we are assuming that 
\begin{equation}\label{eq:mainab}
\alpha>\frac{1}{2},\quad 2\alpha-3\leq \beta<2\alpha-1.
\end{equation}
We only consider the case $\theta=\frac{\alpha+\beta+1}{2}\not=0$. The case $\alpha+\beta+1=0$ can be treated in the same way with minor changes in the proof. Moreover, we recall that we assume that for some $C>0$
\begin{equation*}
\mathcal{E}_{\e}(0)\leq C;\quad\mathcal{F}_{\e}(0)\leq C.
\end{equation*}
\subsection{Uniform bounds}
We collect the uniform bounds with respect to $\e$.  In the following $C>0$ denotes a constant independent on $\e$. We start with the ones that can be deduced from Proposition \ref{prop:energy} and \ref{prop:bdentropy}.
\begin{equation}\label{eq:ub1}
\begin{aligned}
&\sup_t\,\|\re^{\frac{1}{2}}\,\ue\|_{L^{2}_x}\leq C;& &\sup_{t}\,\|\re^{\frac{\delta}{2}}\mu'_{\e}(\re)\dx\re\|_{L^{2}_x}\leq C;\\ 
&\sup_{t}\|\re\|_{L^{\gamma}_x}\leq C;& &\sup_{t}\|\re^{\frac{1}{2}}\dx\phi_{\e}(\re)\|_{L^{2}_x}\leq C;\\
&\|[\mu'_{\e}(\re)]^{\frac{1}{2}}\dx\ue\|_{L^{2}_{t,x}}\leq C;& &\|\re^{\frac{\delta}{2}}[\mu'_{e}(\re)]^{\frac{1}{2}}\dxx\mu_{\e}(\re)\|_{L^{2}_{t,x}}\leq C;\\
&\|\re^{\frac{\delta-2}{4}}[\mu'_{\e}(\re)]^{\frac{3}{4}}\dx\re\|_{L^{4}_{t,x}}\leq C;& &\|[\mu'_{\e}(\re)]^{\frac{1}{2}}\re^{\frac{\gamma-2}{2}}\dx\re\|_{L^{2}_{t,x}}\leq C.
\end{aligned}
\end{equation}
Note that we also used \eqref{eq:sc4}. Next, by using \eqref{eq:viscosity} we easily infer that 
\begin{equation}\label{eq:ub2}
\begin{aligned}
&\sup_{t}\|\dx\re^{\alpha-\frac{1}{2}}\|_{L^{2}_x}\leq C;& &\|\re^{\frac{\alpha}{2}}\dx\ue\|_{L^{2}_{t,x}}\leq C;& &\|\dx\re^{\frac{\gamma+\alpha-1}{2}}\|_{L^{2}_{t,x}}\leq C. 
\end{aligned}
\end{equation}
We also recall the uniform bounds contained in Proposition \ref{prop:vacuum}. We have that 
\begin{equation}\label{eq:ub3}
\begin{aligned}
&\|\re\|_{L^{\infty}_{t,x}}\leq C;& &\|\e\re^{-\frac{1}{4}}\|_{L^{\infty}_{t,x}}\leq C. 
\end{aligned}
\end{equation}
Next, by Corollary \ref{cor:gbd} we can also infer that 
\begin{equation}\label{eq:diss3}
\iint\re^{\delta}(\mu'_\varepsilon(\re))^{3}\left(|\dxx\re|^2+\frac{|\dx\re|^4}{\re^2}\right)\,dxdt\leq C,
\end{equation}
with $\delta=\beta-2\alpha+2$.
Thus, by recalling \eqref{eq:viscosity} we have that
\begin{equation}\label{eq:ub41}
\begin{aligned}
&\|\dxx\,\re^{\theta}\|_{L^{2}_{t,x}}\leq C,& &\|\dx\re^{\frac{\theta}{2}}\|_{L^{4}_{t,x}}\leq C,
\end{aligned}
\end{equation}
with $\theta=\frac{\alpha+\beta+1}{2}$, and 
\begin{equation}\label{eq:ub4}
\e^{3}\iint|\dxx\re^{\frac{4\delta-1}{8}}|^2\,dxdt+\e^{3}\iint\re^{\delta-\frac{9}{4}}\frac{|\dx\re|^{4}}{\re^{2}}\,dxdt\leq C. 
\end{equation}
\subsection{Convergence lemma}
In this subsection we prove some convergence lemma which will be used in the proof of the main theorem. We start with the following 
\begin{lemma}\label{lem:rhoae}
There exists $\rho\geq 0$ such that 
\begin{equation}\label{eq:rhoae}
\re\to\rho\mbox{ a.e. on }(0,T)\times\T. 
\end{equation}
\end{lemma}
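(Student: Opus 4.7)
The plan is to combine the spatial regularity provided by the BD entropy with the time regularity inherited from the continuity equation, and apply an Aubin-Lions-Simon compactness argument to $\rho_\varepsilon$ directly.

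First I would upgrade the bound $\|\partial_x \rho_\varepsilon^{\alpha-1/2}\|_{L^\infty_t L^2_x}\le C$ from \eqref{eq:ub2}, together with $\|\rho_\varepsilon\|_{L^\infty_{t,x}}\le C$ from \eqref{eq:ub3}, into uniform spatial Hölder regularity of $\rho_\varepsilon$ itself. The family $\rho_\varepsilon^{\alpha-1/2}$ is uniformly bounded in $L^\infty_t H^1_x$, hence by the one-dimensional Sobolev embedding $H^1(\T)\hookrightarrow C^{0,1/2}(\T)$ it is bounded in $L^\infty_t C^{0,1/2}_x$. Since $\alpha>1/2$, composing with the continuous function $y\mapsto y^{1/(\alpha-1/2)}$, which is Hölder continuous of exponent $\min\{1,1/(\alpha-1/2)\}$ on bounded subsets of $[0,\infty)$, gives a uniform bound for $\rho_\varepsilon$ in $L^\infty_t C^{0,s}_x$ for some $s>0$.

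Next, the continuity equation yields $\dt\rho_\varepsilon=-\dx(\rho_\varepsilon u_\varepsilon)$, and the decomposition $\rho_\varepsilon u_\varepsilon=\sqrt{\rho_\varepsilon}\cdot(\sqrt{\rho_\varepsilon}\,u_\varepsilon)$ together with $\|\rho_\varepsilon\|_{L^\infty_{t,x}}\le C$ and the energy bound $\|\sqrt{\rho_\varepsilon}\,u_\varepsilon\|_{L^\infty_t L^2_x}\le C$ from \eqref{eq:ub1} gives $\rho_\varepsilon u_\varepsilon$ bounded in $L^\infty_t L^2_x$. Hence $\dt\rho_\varepsilon$ is uniformly bounded in $L^\infty(0,T;H^{-1}(\T))$. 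Applying Aubin-Lions-Simon to the chain $C^{0,s}(\T)\hookrightarrow L^2(\T)\hookrightarrow H^{-1}(\T)$ (the first embedding compact), with $\{\rho_\varepsilon\}$ bounded in $L^\infty(0,T;C^{0,s}(\T))$ and $\{\dt\rho_\varepsilon\}$ bounded in $L^\infty(0,T;H^{-1}(\T))$, I conclude that $\{\rho_\varepsilon\}$ is precompact in $C([0,T];L^2(\T))$. A diagonal extraction then yields a subsequence and a nonnegative limit $\rho$ such that $\rho_\varepsilon\to\rho$ almost everywhere on $(0,T)\times\T$.

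The main subtlety is precisely the first step: transferring the BD-entropy bound on $\dx\rho_\varepsilon^{\alpha-1/2}$ to uniform equicontinuity of $\rho_\varepsilon$. The naive algebraic inversion $\dx\rho_\varepsilon\sim\rho_\varepsilon^{3/2-\alpha}\dx\rho_\varepsilon^{\alpha-1/2}$ breaks down as soon as $\alpha>3/2$, since one has no uniform lower bound on $\rho_\varepsilon$ (the $\varepsilon$-dependent bound from Theorem \ref{teo:appex} is useless here). The Hölder-composition detour through $C^{0,1/2}(\T)$, valid for every $\alpha>1/2$, avoids this degeneracy and delivers the uniform spatial modulus needed to close the compactness argument.
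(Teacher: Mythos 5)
Your argument is correct, and it takes a genuinely different (and slightly more unified) route than the paper. The paper's proof splits into two cases: for $\alpha\geq 1$ it works with the renormalized continuity equation for $\re^{\alpha-\frac{1}{2}}$, checks that $\re^{\alpha-\frac12}\ue$ and $\re^{\alpha-\frac12}\dx\ue$ are controlled by \eqref{eq:ub1}--\eqref{eq:ub3} (this is where $\alpha\geq1$ is used), applies Aubin--Lions to get $\re^{\alpha-\frac12}\to\rho^{\alpha-\frac12}$ in $C(0,T;L^{2}(\T))$, and then recovers the a.e.\ convergence of $\re$ by inverting the power; for $\frac12<\alpha<1$ it instead writes $\dx\re=\frac{\re^{\frac32-\alpha}\dx\re^{\alpha-\frac12}}{\alpha-\frac12}$ (legitimate there since $\frac32-\alpha>0$) and applies Aubin--Lions to $\re$ directly. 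You bypass the case distinction entirely: the $L^{\infty}_{t}H^{1}_{x}$ bound on $\re^{\alpha-\frac12}$ plus Morrey's embedding gives a uniform $L^\infty_t C^{0,1/2}_x$ bound, and composing with $y\mapsto y^{1/(\alpha-1/2)}$ (Lipschitz on bounded sets if $\alpha\leq\frac32$, $\frac{1}{\alpha-1/2}$-H\"older if $\alpha>\frac32$) yields a uniform $L^{\infty}_{t}C^{0,s}_{x}$ bound on $\re$ itself for every $\alpha>\frac12$; combined with $\dt\re=-\dx(\re\ue)$ bounded in $L^{\infty}_{t}H^{-1}_{x}$, Aubin--Lions--Simon in the triple $C^{0,s}(\T)\hookrightarrow L^{2}(\T)\hookrightarrow H^{-1}(\T)$ gives precompactness of $\re$ in $C([0,T];L^{2}(\T))$ and hence a.e.\ convergence along a subsequence. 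What your route buys is that it uses only the plain (non-renormalized) continuity equation and a single argument valid for all $\alpha>\frac12$; what it costs is the H\"older-composition step and the compact embedding of H\"older spaces into $L^2$, whereas the paper stays within elementary $H^1/L^2/H^{-1}$ energy-space embeddings at the price of the case split. Your diagnosis of why the naive inversion $\dx\re\sim\re^{\frac32-\alpha}\dx\re^{\alpha-\frac12}$ fails for large $\alpha$ is exactly the reason the paper treats $\alpha\geq1$ separately. (Minor quibbles only: no diagonal extraction is needed, a single subsequence suffices; and the paper's $L^\infty_t$ claim for the time derivative in the case $\alpha\ge1$ is really an $L^2_t$ bound for one of the terms, which is irrelevant for your version since $\dx(\re\ue)$ is genuinely bounded in $L^{\infty}_{t}H^{-1}_{x}$.)
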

\begin{proof}
We first consider the case $\alpha\geq 1$. By using the continuity equation we have that
\begin{equation*}
\dt\re^{\alpha-\frac{1}{2}}+\dx(\re^{\alpha-\frac{1}{2}}\ue)+\left(\alpha-\frac{3}{2}\right)\re^{\alpha-\frac{1}{2}}\dx\ue=0. 
\end{equation*}
Since $\alpha\geq 1$, by using \eqref{eq:ub1} and \eqref{eq:ub2} we have that 
\begin{equation*}
\begin{aligned}
&\{\dt\re^{\alpha-\frac{1}{2}}\}_{\e}\subset L^{\infty}(0,T;H^{-1}(\T)),& &\{\dx\re^{\alpha-\frac{1}{2}}\}_{\e}\subset L^{\infty}(0,T;L^{2}(\T))
\end{aligned}
\end{equation*}
uniformly in $\e$. Then, by using Aubin-Lions lemma we can conclude that there exists $\rho\geq 0$ such that up to a subsequence not relabelled
\begin{equation*}
\re^{\alpha-\frac{1}{2}}\to\rho^{\alpha-\frac{1}{2}}\mbox{ strongly in }C(0,T;L^{2}(\T)). 
\end{equation*}
Passing to a further subsequence if necessary we obtain \eqref{eq:rhoae}. 

\noindent Next, we consider the case $\frac{1}{2}<\alpha<1$. It is enough to note that 
\begin{equation*}
\dx\re=\frac{\re^{\frac{3}{2}-\alpha}\dx\re^{\alpha-\frac{1}{2}}}{\alpha-\frac{1}{2}}. 
\end{equation*}
Thus, 
\begin{equation*}
\begin{aligned}
&\{\dt\re\}_{\e}\subset L^{\infty}(0,T;H^{-1}(\T)),& &\{\dx\re\}_{\e}\subset L^{\infty}(0,T;L^{2}(\T))
\end{aligned}
\end{equation*}
uniformly in $\e$, and we can conclude as in the previous case. 
\end{proof}
The following lemma is an easy consequence of \eqref{eq:rhoae} and the bounds \eqref{eq:ub1}-\eqref{eq:ub4}. 
\begin{lemma}\label{lem:weakrho}
The following convergences hold: 
\begin{align}
&\re\to\rho\mbox{ in }L^{p}(0,T;L^{p}(\T))\mbox{ for any }p\geq 1\label{eq:slpr}\\
&\re\weaktos\rho\mbox{ in }L^{\infty}(0,T;L^{\infty}(\T)),\label{eq:wlir}\\
&\re^{\theta}\weakto\rho^{\theta}\mbox{ in }L^{2}(0,T;H^{2}(\T)),\label{eq:wh2r}\\
&\dx\re^{\frac{\theta}{2}}\weakto\dx\rho^{\frac{\theta}{2}}\mbox{ in }L^{4}(0,T;L^{4}(\T))\label{eq:wl4r}.
\end{align}
\end{lemma}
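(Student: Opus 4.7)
The plan is to deduce each of the four convergences from the pointwise a.e.\ convergence $\re\to\rho$ established in Lemma \ref{lem:rhoae}, combined with the uniform estimates \eqref{eq:ub1}--\eqref{eq:ub4}. Strong convergences will be obtained via dominated convergence, while weak convergences will follow from weak compactness with the limit identified through the a.e.\ convergence.

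First, the uniform bound $\|\re\|_{L^\infty_{t,x}}\leq C$ in \eqref{eq:ub3} together with Banach--Alaoglu yields, along a subsequence, a weak-$*$ limit in $L^{\infty}((0,T)\times\T)$; this limit must agree with the a.e.\ limit $\rho$, proving \eqref{eq:wlir} (and, by uniqueness, along the full sequence). Combining the a.e.\ convergence with the same $L^{\infty}$ bound on the bounded domain $(0,T)\times\T$, the dominated convergence theorem upgrades \eqref{eq:rhoae} to strong convergence in $L^{p}$ for every $p\in[1,\infty)$, which is \eqref{eq:slpr}.

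Next, for \eqref{eq:wh2r}, the bound \eqref{eq:ub41} provides $\|\dxx\re^{\theta}\|_{L^{2}_{t,x}}\leq C$, and since $\re$ is uniformly bounded in $L^{\infty}$ so is $\re^{\theta}$; hence the family $\{\re^{\theta}\}_{\e}$ is bounded in $L^{2}(0,T;H^{2}(\T))$. By reflexivity, a subsequence converges weakly to some $Z\in L^{2}(0,T;H^{2}(\T))$. The a.e.\ convergence $\re^{\theta}\to\rho^{\theta}$ combined with the $L^{\infty}$ bound gives, via dominated convergence, strong $L^{2}$ convergence of $\re^{\theta}$ to $\rho^{\theta}$; uniqueness of limits forces $Z=\rho^{\theta}$, and the whole sequence converges weakly. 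The argument for \eqref{eq:wl4r} is analogous: the bound \eqref{eq:ub41} on $\|\dx\re^{\theta/2}\|_{L^{4}_{t,x}}$ yields a subsequential weak limit $V\in L^{4}((0,T)\times\T)$, while the strong $L^{p}$ convergence $\re^{\theta/2}\to\rho^{\theta/2}$ (again by dominated convergence) implies $\dx\re^{\theta/2}\to\dx\rho^{\theta/2}$ in $\mathcal{D}'$; uniqueness of distributional limits then gives $V=\dx\rho^{\theta/2}$.

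I do not foresee a genuine obstacle: the lemma is essentially a bookkeeping statement packaging standard compactness and limit-identification arguments, and the a.e.\ convergence from Lemma \ref{lem:rhoae} is precisely what is needed to fix the weak limits. The only points to be mildly careful about are the use of the $L^{\infty}$ bound from \eqref{eq:ub3} to justify dominated convergence (valid because $(0,T)\times\T$ has finite measure and $\re$ is uniformly bounded), and the routine observation that distributional differentiation is continuous with respect to distributional convergence, which is what identifies the weak $L^{4}$ limit of $\dx\re^{\theta/2}$.
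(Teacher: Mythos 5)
Your argument is correct and is exactly what the paper intends: the paper offers no proof at all, stating only that the lemma is an easy consequence of Lemma \ref{lem:rhoae} and the bounds \eqref{eq:ub1}--\eqref{eq:ub4}, and your dominated-convergence/weak-compactness/limit-identification scheme is the standard way to fill that in. One caveat: the steps where you deduce boundedness of $\re^{\theta}$ (and domination of $\re^{\theta}$, $\re^{\theta/2}$ by a constant) from $\|\re\|_{L^{\infty}_{t,x}}\leq C$ silently assume $\theta>0$, whereas the range \eqref{eq:condnew} allows $\theta=\frac{\alpha+\beta+1}{2}<0$ (e.g.\ $\alpha=0.55$, $\beta=-1.9$); in that sub-case one needs a different justification (for instance an $L^{2}_{t}L^{\infty}_{x}$ bound on $\re^{\theta}$ obtained from the conserved mean of $\re$ together with the $L^{4}$ bound on $\dx\re^{\theta/2}$, and Fatou to see that $\rho^{\theta}$ is finite a.e.), a point the paper's one-line justification glosses over as well.
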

Next, we study the convergences concerning the momentum. 
\begin{lemma}\label{lem:mom}
There exist $\Lambda$, $m$ and $\mathcal{T}$ such that 
\begin{align}
&\re^{\frac{1}{2}}\ue\weaktos\Lambda\mbox{ in }L^{\infty}(0,T;L^{2}(\T))\label{eq:weak2u}\\
&\re\ue\to\, m\mbox{ in }L^{p}(0,T;L^{p}(\T))\mbox{ for any }p\in[1,2)\label{eq:strong2m}\\
&[\mu_{\e}(\re)]^{\frac{1}{2}}\dx\ue\weakto\mathcal{T}\mbox{ in }L^{2}(0,T;L^{2}(\T))\label{eq:tauconv}
\end{align}
Moreover, $m=\rho^{\frac{1}{2}}\Lambda$ and $m=0$ on $\{\rho=0\}$.
\end{lemma}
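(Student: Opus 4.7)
The plan is to extract the three weak limits from the uniform bounds and then identify $m$ in terms of $\Lambda$ and $\rho$. The first and third convergences are immediate from Banach-Alaoglu: the bound $\sup_t\|\re^{1/2}\ue\|_{L^2_x}\le C$ from \eqref{eq:ub1} produces, along a subsequence, $\Lambda$ satisfying \eqref{eq:weak2u}, while $\|[\me(\re)]^{1/2}\dx\ue\|_{L^2_{t,x}}\le C$, which follows from the energy identity of Proposition \ref{prop:energy}, gives \eqref{eq:tauconv} with limit $\mathcal{T}$.

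For the strong convergence \eqref{eq:strong2m}, I would proceed by an Aubin-Lions-Simon compactness argument. The $L^\infty_{t,x}$ bound on $\re$ in \eqref{eq:ub3} combined with $\re^{1/2}\ue\in L^\infty_t L^2_x$ yields $|\re\ue|\le \|\re\|_{L^\infty_{t,x}}^{1/2}|\re^{1/2}\ue|$, so $\re\ue$ is uniformly bounded in $L^\infty_t L^2_x$. Time regularity would come from the momentum equation in \eqref{eq:nska} by estimating each term in a negative Sobolev space: the convective term via $\re\ue\ue=(\sqrt{\re}\,\ue)^2\in L^\infty_t L^1_x$; the viscous term $\dx(\me(\re)\dx\ue)$ using $\me(\re)\dx\ue=[\me(\re)]^{1/2}\cdot[\me(\re)]^{1/2}\dx\ue\in L^2_{t,x}$, since $[\me(\re)]^{1/2}\in L^\infty_{t,x}$ by \eqref{eq:ub3}; the pressure $\dx\re^\gamma$ via $\re^\gamma\in L^\infty_{t,x}$; and the Korteweg contribution by rewriting it in divergence form as in \eqref{eq:kweak} at the $\e$-level and controlling it by \eqref{eq:ub41} together with Corollary \ref{cor:gbd}. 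This yields $\dt(\re\ue)$ uniformly bounded in some $L^r(0,T;H^{-k}(\T))$. Aubin-Lions-Simon with the compact embedding $L^2(\T)\hookrightarrow\hookrightarrow H^{-\epsilon}(\T)$ then provides relative compactness of $\re\ue$ in $L^r(0,T;H^{-\epsilon}(\T))$, and combining this with the uniform integrability supplied by the $L^\infty_t L^2_x$ bound, via a Vitali-type argument, upgrades the convergence to strong in $L^p(0,T;L^p(\T))$ for every $p\in[1,2)$, producing $m$.

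For the identification $m=\sqrt{\rho}\,\Lambda$, Lemma \ref{lem:rhoae} gives $\re\to\rho$ a.e., and since $\re$ is uniformly bounded, dominated convergence yields $\sqrt{\re}\to\sqrt{\rho}$ strongly in $L^q((0,T)\times\T)$ for every finite $q$. Writing $\re\ue=\sqrt{\re}\cdot(\sqrt{\re}\,\ue)$ and combining the strong convergence of $\sqrt{\re}$ with the weak-$*$ convergence \eqref{eq:weak2u} via the strong-weak product rule gives $\re\ue\weakto\sqrt{\rho}\,\Lambda$ weakly in $L^q_{t,x}$ for appropriate $q>1$; uniqueness of the weak limit then forces $m=\sqrt{\rho}\,\Lambda$, which in particular vanishes on the vacuum set $\{\rho=0\}$.

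The main obstacle is the strong compactness of the momentum in $L^p_{t,x}$. Although the Aubin-Lions-Simon framework is standard, bounding the Korteweg contribution in $\dt(\re\ue)$ uniformly in $\e$ requires careful use of both the BD entropy (Proposition \ref{prop:bdentropy}) and the strong coercivity inequality of Corollary \ref{cor:gbd}, because the terms involving $\re^\delta\me'(\re)\dxx\me(\re)$ and $\re^{\delta-1}|\dx\me(\re)|^2$ are heavily degenerate, and the subsequent passage from $H^{-\epsilon}$-compactness to $L^p_{t,x}$-strong convergence must be performed without relying on any a priori spatial Sobolev bound on $\re\ue$ itself.
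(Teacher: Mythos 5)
Your treatment of \eqref{eq:weak2u}, \eqref{eq:tauconv} and of the identification $m=\rho^{\frac12}\Lambda$ (strong convergence of $\sqrt{\re}$ from Lemma \ref{lem:rhoae} and \eqref{eq:ub3}, paired with the weak-$*$ limit of $\sqrt{\re}\,\ue$) is correct and coincides with the paper. The gap is in the proof of \eqref{eq:strong2m}. Your scheme is: bound $\re\ue$ in $L^{\infty}_tL^2_x$, bound $\dt(\re\ue)$ in $L^{r}(0,T;H^{-k}(\T))$ from the momentum equation, apply Aubin--Lions with the compact embedding $L^{2}(\T)\hookrightarrow\hookrightarrow H^{-\epsilon}(\T)$ to get compactness of $\re\ue$ in $L^{r}(0,T;H^{-\epsilon}(\T))$, and then ``upgrade'' to strong convergence in $L^{p}_{t,x}$, $p<2$, by a Vitali-type argument using the $L^{\infty}_tL^2_x$ bound. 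This last step is not valid: convergence in a negative Sobolev space together with equi-integrability does not yield convergence in measure, so Vitali does not apply. Concretely, a sequence such as $f_n(x)=\sin(nx)$ is bounded in $L^2(\T)$, equi-integrable, and converges strongly to $0$ in $H^{-\epsilon}(\T)$, yet no subsequence converges strongly in any $L^{p}(\T)$, $p\ge 1$. What your argument is missing is \emph{spatial} compactness of the momentum, and this is exactly the point you flag at the end without resolving: no uniform spatial Sobolev bound on $\re\ue$ itself is available, because the viscosity degenerates on the vacuum set and $\ue$ is not controlled there.

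The paper closes this hole with the truncation $\phi_M$ of \eqref{eq:phim}, applied to the density: for fixed $M$, on the support of $\phi_M(\re)$ one has $\re\ge \tfrac{1}{2M}$, and then the BD-entropy bounds $\sup_t\|\dx\re^{\alpha-\frac12}\|_{L^2_x}\le C$ and $\|\re^{\frac{\alpha}{2}}\dx\ue\|_{L^2_{t,x}}\le C$ from \eqref{eq:ub2} give $\|\dx(\re\phi_M(\re)\ue)\|_{L^2_{t,x}}\le C_M$, while the momentum equation gives $\|\dt(\re\phi_M(\re)\ue)\|_{L^2_t(H^{-2}_x)}\le C_M$. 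Aubin--Lions then applies to the \emph{truncated} momentum $\re\phi_M(\re)\ue$, yielding strong $L^2_{t,x}$ convergence to $\phi_M(\rho)m$ for each fixed $M$. The truncation is removed afterwards: since $\sqrt{\re}\,(1-\phi_M(\re))\le CM^{-\frac12}$ and $\sqrt{\re}\,\ue$ is bounded in $L^{\infty}_tL^2_x$, the error terms $\iint|(1-\phi_M(\re))\re\ue|$ and $\iint|(1-\phi_M(\rho))m|$ are $O(M^{-\frac12})$ uniformly in $\e$, giving convergence in $L^1_{t,x}$ and then \eqref{eq:strong2m} by interpolation with the $L^{\infty}_tL^2_x$ bound. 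Without this truncation (or an equivalent device producing spatial compactness away from vacuum), the passage from time-compactness in $H^{-\epsilon}$ to strong $L^p_{t,x}$ convergence cannot be justified, so your proof as written does not establish \eqref{eq:strong2m}.
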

\begin{proof}
We note that by \eqref{eq:ub1} and \eqref{eq:ub3} we have that 
\begin{equation*}
\sup_{t}\|\re\ue\|_{L^{2}_x}\leq C, 
\end{equation*}
with $C>0$ independent on $\e$. Thus, there exist $\Lambda$ and $m$ such that 
\begin{equation}\label{eq:mom1}
\begin{aligned}
&\re^{\frac{1}{2}}\ue\weaktos\Lambda\mbox{ in }L^{\infty}(0,T;L^{2}(\T))\\
&\re\ue\weaktos\,m\mbox{ in }L^{\infty}(0,T;L^{2}(\T)). 
\end{aligned}
\end{equation}
In particular, \eqref{eq:ub3}, \eqref{eq:slpr}, and \eqref{eq:mom1} imply that $m=\rho^{\frac{1}{2}}\Lambda$ a.e. on $(0,T)\times\T$ and that $m=0$ on $\{\rho=0\}$. Therefore, it remains only to prove that \eqref{eq:strong2m}. Let $M\in\N$ and $\phi_{M}$ be the function defined as 
\begin{equation}\label{eq:phim}
\phi_{M}(y):=
\begin{cases}
\quad0\quad &y\in\left[0,\frac{1}{2M}\right)\\ 
\quad2My-1\quad &y\in\left[\frac{1}{2M},\frac{1}{M}\right)\\
\quad1\quad &y\geq\frac{1}{M}.
\end{cases}
\end{equation}
By using \eqref{eq:slpr} and \eqref{eq:mom1} we have that 
\begin{equation}\label{eq:convmt}
\re\phi_{M}(\re)\ue\weaktos \phi_{M}(\rho)\,m\mbox{ in }L^{\infty}(0,T;L^{2}(\T)). 
\end{equation}
By \eqref{eq:ub2} and the definition of $\phi_{M}$ we have that for any fixed $M\in\N$ there exists $C_{M}>0$ independent on $\e$ such that 
\begin{equation*}
\|\dx(\re\phi_{M}(\re)\ue)\|_{L^{2}_{t,x}}\leq C_M.
\end{equation*}
Moreover, by using the momentum equation in \eqref{eq:nska}, the bounds \eqref{eq:ub1}-\eqref{eq:ub4}, and the definition of $\phi_{M}$, after a tedious calculation we also have that 
\begin{equation*}
\|\dt(\re\phi_{M}(\re)\ue)\|_{L^{2}_t(H^{-2}_{x})}\leq C_M.
\end{equation*}
Thus, we obtain from the Aubin-Lions lemma that for any fixed $M\in\N$
\begin{equation}\label{eq:strongmm}
\{\re\phi_{M}(\re)\ue\}_{\e}\mbox{ is precompact in }L^{2}(0,T;L^{2}(\T)). 
\end{equation}
In particular, \eqref{eq:convmt}, \eqref{eq:strongmm}, and a diagonal argument imply that, up to a subsequence not relabelled, it holds that 
\begin{equation}\label{eq:convmml2}
\re\phi_{M}(\re)\ue\to\phi_{M}(\rho)m\mbox{ in }L^{2}(0,T;L^{2}(\T)). 
\end{equation}
To conclude, we start by noting that 
\begin{equation*}
\begin{aligned}
\iint|\re\ue-m|\,dxdt&\leq \iint|(1-\phi_{M}(\re))\re\ue|\,dxdt+\iint|\re\phi_{M}(\re)\ue-\phi_{M}(\rho)m|\,dxdt\\
&+\iint|(1-\phi_{M}(\rho))m|\,dxdt\\
&={I}_{\e}+{II}_{\e}+{III}_{\e}
\end{aligned}
\end{equation*}
Since, by H\"older inequality, \eqref{eq:ub1} and \eqref{eq:mom1} we have that 
\begin{equation*}
{I}_{\e}+{III}_{\e}\leq \frac{C}{\sqrt{M}}
\end{equation*}
for some $C>0$ independent on $\e$ and $M$, by using \eqref{eq:convmml2}, we have that 
\begin{equation*}
\re\ue\to m\mbox{ in }L^{1}(0,T;L^{1}(\T)), 
\end{equation*}
and then \eqref{eq:strong2m} follows by interpolation. Finally, \eqref{eq:tauconv} follows by \eqref{eq:ub1} and standard weak compactness argument. 
\end{proof}
The next lemma will be crucial to deal with the convergence of the Korteweg term. 
\begin{lemma}\label{lem:gradients}
The following convergences hold: 
\begin{align}
&\re^{\frac{\delta}{2}}\mu'_{\e}(\re)\dx\re\to\frac{2\alpha}{\theta}\rho^{\frac{\beta+2-\theta}{2}}\dx\rho^{\frac{\theta}{2}}\mbox{ in }L^{p}(0,T;L^{p}(\T)),\quad1\leq p<4\label{eq:g1}\\
&\re^{\frac{\delta}{2}}(\re\mu''_{\e}(\re))\dx\re\to\frac{2\alpha(\alpha-1)}{\theta}\rho^{\frac{\beta+2-\theta}{2}}\dx\rho^{\frac{\theta}{2}}\mbox{ in }L^{p}(0,T;L^{p}(\T)),\quad1\leq p<4\label{eq:g2}\\
&\re^{\delta+1}[\mu'_{\e}(\re)]^2\dx\re\to\frac{2\alpha^2}{\theta}\rho^{\beta+2-\frac{\theta}{2}}\dx\rho^{\frac{\theta}{2}}\mbox{ in }L^{p}(0,T;L^{p}(\T)),\quad1\leq p<4\label{eq:g3}
\end{align}
\end{lemma}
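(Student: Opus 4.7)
The three convergences share a parallel structure, so I would treat them together. Expanding $\mu'_\e(\re)=\alpha\re^{\alpha-1}+(\e/4)\re^{-3/4}$ and $\mu''_\e(\re)=\alpha(\alpha-1)\re^{\alpha-2}-(3\e/16)\re^{-7/4}$, each left-hand side splits as an $\e$-independent principal part $c\,\re^{\eta}\dx\re$ (with $\eta=\beta/2$ in the first two cases and $\eta=\beta+1$ in the third) plus a finite number of $\e$-errors. Using $\dx\re^{s}=s\re^{s-1}\dx\re$, I rewrite each principal part as $(2c/\theta)\,\re^{\eta-\theta/2+1}\dx\re^{\theta/2}$. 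A direct computation gives $\eta-\theta/2+1=(\beta-\alpha+3)/4$ for the first two quantities and $\eta-\theta/2+1=(3\beta-\alpha+7)/4$ for the third; under \eqref{eq:mainab} both are strictly positive, so $\re^{\eta-\theta/2+1}$ is uniformly bounded in $L^\infty_{t,x}$ and converges a.e., hence strongly in every $L^q_{t,x}$, to $\rho^{\eta-\theta/2+1}$.

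\textbf{Step 2: Strong convergence of $\dx\re^{\theta/2}$.} The key step is to upgrade the weak convergence $\dx\re^{\theta/2}\weakto\dx\rho^{\theta/2}$ in $L^4_{t,x}$ coming from \eqref{eq:wl4r} to strong convergence in $L^p_{t,x}$ for every $p\in[1,4)$. By \eqref{eq:ub41} together with $\re\in L^\infty_{t,x}$, the sequence $\{\re^\theta\}$ is bounded in $L^2(0,T;H^2(\T))$. Multiplying the continuity equation by $\theta\re^{\theta-1}$ gives $\dt\re^\theta=-\dx(\re^\theta\ue)-(\theta-1)\re^\theta\dx\ue$, from which the uniform bounds $\re\in L^\infty_{t,x}$, $\re^{1/2}\ue\in L^\infty_tL^2_x$, $\re^{\alpha/2}\dx\ue\in L^2_{t,x}$ imply that $\dt\re^\theta$ is bounded in some $L^r(0,T;W^{-1,q}(\T))$ with $r,q>1$. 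Aubin--Lions then yields $\re^\theta\to\rho^\theta$ strongly in $L^2(0,T;H^{2-\sigma}(\T))$ for every $\sigma>0$; in particular $\dx\re^\theta\to\dx\rho^\theta$ strongly in $L^2_{t,x}$. To transfer this to $\dx\re^{\theta/2}$ I would use the identity $\dx\re^\theta=2\re^{\theta/2}\dx\re^{\theta/2}$ and a truncation $\phi_M$ in the spirit of \eqref{eq:phim}: on $\{\re\ge 1/M\}$ the weight $\re^{-\theta/2}$ is bounded, so strong $L^2_{t,x}$ convergence of $\dx\re^\theta$ passes to $\phi_M(\re)\dx\re^{\theta/2}$ in $L^2_{t,x}$, and interpolating with the uniform $L^4_{t,x}$ bound gives strong $L^p_{t,x}$-convergence for every $p<4$. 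The vacuum remainder $(1-\phi_M(\re))\dx\re^{\theta/2}$ is shown to vanish in $L^p_{t,x}$ uniformly in $\e$ as $M\to\infty$ by H\"older combined with the uniform $L^4$-bound, the a.e.\ convergence $\re\to\rho$, and Egorov.

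\textbf{Step 3: Principal-part limit and $\e$-errors.} Combining Steps 1 and 2, each principal part converges strongly in $L^p_{t,x}$, $p<4$, to the claimed limit. The $\e$-errors all take the form $\e^k\re^r|\dx\re|$ with $k\in\{1,2\}$. To show their $L^4_{t,x}$-norms vanish as $\e\to 0$, I would invoke two uniform controls: first, the dissipation estimate \eqref{eq:diss3}, which, on discarding the $\re^\alpha$ part of $\mu_\e$, yields $\e^{3/4}\re^{(\delta-17/4)/4}\dx\re\in L^4_{t,x}$ uniformly; and second, the vacuum bound $\|\e\re^{-1/4}\|_{L^\infty_{t,x}}\le C$ from Proposition \ref{prop:vacuum}, applied $k-1$ times to extract an $\e^{k-1}$ at the cost of a power $\re^{-(k-1)/4}$. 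An elementary exponent check under \eqref{eq:mainab} then shows that the residual $\re$-power is non-negative and a factor $\e^\kappa$ with $\kappa>0$ is left over, so each error is $O(\e^\kappa)$ in $L^4_{t,x}$.

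\textbf{Main obstacle.} The technical heart is the strong $L^p_{t,x}$-convergence of $\dx\re^{\theta/2}$: one has to extract enough time regularity of $\re^\theta$ from the continuity equation even though only weighted norms of $\ue$ are available, and then reconcile the $H^1$-strong convergence of $\re^\theta$ with pointwise division by $\re^{\theta/2}$ near vacuum through the truncation/small-set argument. Once this compactness is in place, the remainder of the proof reduces to exponent bookkeeping.
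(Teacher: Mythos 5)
Your Step 1 (the algebraic decomposition and the positivity of the exponents $\tfrac{\beta+2-\theta}{2}$, $\beta+2-\tfrac{\theta}{2}$) and your Step 3 (exponent bookkeeping for the $\e$-errors via \eqref{eq:diss3}/\eqref{eq:ub4} and $\|\e\re^{-1/4}\|_{L^\infty_{t,x}}\le C$) match the paper's proof in substance. The genuine gap is in Step 2, which is where all the difficulty sits. First, your Aubin--Lions argument for $\re^{\theta}$ needs a uniform bound on $\dt\re^{\theta}=-\dx(\re^{\theta}\ue)-(\theta-1)\re^{\theta}\dx\ue$, and the two products require $\re^{\theta-\frac12}$ and $\re^{\theta-\frac{\alpha}{2}}$ to be bounded, i.e.\ $\theta\ge\frac12$ and $\beta\ge-1$. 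Under \eqref{eq:condnew} this fails on a large part of the admissible range: for $\alpha$ close to $\frac12$ and $\beta=2\alpha-3$ close to $-2$ one even has $\theta<0$, and no uniform negative-power bound on $\re$ is available (the only one, $\e\re^{-1/4}\le C$, carries a factor $\e$). Second, and more structurally, you are trying to prove the \emph{unweighted} strong convergence of $\dx\re^{\theta/2}$, and your removal of the truncation cannot work: the limit density may vanish on a set of positive measure, so $|\{\re\le 1/M\}|$ does \emph{not} become small as $M\to\infty$ (a.e.\ convergence forces $\re\le 1/M$ eventually on essentially all of $\{\rho=0\}$), hence H\"older against the uniform $L^4$ bound gives no smallness for $(1-\phi_M(\re))\dx\re^{\theta/2}$, and Egorov does not help because the obstruction is not the exceptional set of a.e.\ convergence but the vacuum set itself. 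There is no mechanism in the available bounds preventing $|\dx\re^{\theta/2}|$ from oscillating on $\{\rho=0\}$, so the unweighted statement is exactly what one should avoid claiming.

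The paper sidesteps both problems by proving only the weighted convergence \eqref{eq:gstrong}, i.e.\ $\re^{s}\dx\re^{\theta/2}\to\rho^{s}\dx\rho^{\theta/2}$ in $L^p_{t,x}$, $p<4$, for every $s>0$ (which is all that Step 1 requires, since your exponents $\eta-\tfrac{\theta}{2}+1$ are strictly positive), and it does so without any time-compactness for $\re^{\theta}$: one expands $\|\re^{s}\dx\re^{\theta/2}-\rho^{s}\dx\rho^{\theta/2}\|_{L^2}^2$ and uses the integration-by-parts identity
\begin{equation*}
\iint\re^{2s}|\dx\re^{\theta/2}|^2\,dxdt=-\frac{\theta}{8s}\iint\re^{2s}\dxx\re^{\theta}\,dxdt,
\end{equation*}
so that the quadratic term passes to the limit using only the weak $L^2_{t,x}$ convergence of $\dxx\re^{\theta}$ from \eqref{eq:ub41} against $\re^{2s}$, which converges strongly by Lemma \ref{lem:rhoae} and \eqref{eq:ub3}; the weight $\re^{s}$, $s>0$, is precisely what neutralizes the vacuum region. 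If you replace your Step 2 by this argument (or at least restrict your truncation argument to the weighted quantities, where $(1-\phi_M(\re))\re^{s}\le M^{-s}$ does give uniform smallness, and find a substitute for the Aubin--Lions input), the rest of your plan goes through.
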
 
\begin{proof}
We first prove that for any $s>0$ 
\begin{equation}\label{eq:gstrong}
\re^{s}\dx\re^{\frac{\theta}{2}}\to \rho^{s}\dx\rho^{\frac{\theta}{2}}\mbox{ in }L^{p}(0,T;L^{p}(\T))\,\, 1\leq p<4. 
\end{equation}
We note that by using Lemma \ref{lem:weakrho} we have that 
\begin{equation}\label{eq:weakg}
\re^{s}\dx\re^{\frac{\theta}{2}}\weakto \rho^{s}\dx\rho^{\frac{\theta}{2}}\mbox{ in }L^{2}(0,T;L^{2}(\T)).
\end{equation}
Then, by expanding the $L^{2}$-norm we obtain 
\begin{equation*}
\begin{aligned}
\|\re^{s}\dx\re^{\frac{\theta}{2}}-\rho^{s}\dx\rho^{\frac{\theta}{2}}\|_{2}^2&=\iint\re^{2s}|\dx\re^{\frac{\theta}{2}}|^2\,dxdt-2\iint\re^{s}\dx\re^{\frac{\theta}{2}}\rho^{s}\dx\rho^{\frac{\theta}{2}}\,dxdt+\iint\rho^{2s}|\dx\rho^{\frac{\theta}{2}}|^2\,dxdt\\
&=-\frac{\theta}{8s}\iint\re^{2s}\dxx\re^{\theta}\,dxdt-2\iint\re^{s}\dx\re^{\frac{\theta}{2}}\rho^{s}\dx\rho^{\frac{\theta}{2}}\,dxdt+\iint\rho^{2s}|\dx\rho^{\frac{\theta}{2}}|^2\,dxdt.
\end{aligned}
\end{equation*}
By using Lemma \ref{lem:weakrho} and \eqref{eq:weakg} we get 
\begin{equation*}
\lim_{\e\to 0}\|\re^{s}\dx\re^{\frac{\theta}{2}}-\rho^{s}\dx\rho^{\frac{\theta}{2}}\|_{2}^2=-\frac{\theta}{8s}\iint\rho^{2s}\dxx\rho^{\theta}\,dxdt-\iint\rho^{2s}|\dx\rho^{\frac{\theta}{2}}|^2\,dxdt=0.
\end{equation*}
Since $s>0$, by using \eqref{eq:ub3} and \eqref{eq:ub4} we obtain \eqref{eq:gstrong}. Next, we prove \eqref{eq:g1}. We note that 
\begin{equation*}
\begin{aligned}
\re^{\frac{\delta}{2}}\mu'_{\e}(\re)\dx\re&=\alpha\re^{\frac{\delta}{2}+\alpha-1}\dx\re+\frac{\e}{4}\re^{\frac{\delta}{2}-\frac{3}{4}}\dx\re\\
&=\frac{2\alpha}{\theta}\re^{\frac{\delta}{2}+\alpha-\frac{\theta}{2}}\dx\re^{\frac{\theta}{2}}+\frac{\e}{4}\re^{\frac{2\delta-1}{4}}\frac{\dx\re}{\sqrt{\re}}
\end{aligned}
\end{equation*}
We recall that $\delta=\beta-2\alpha+2$. In particular, \eqref{eq:mainab} is equivalent to $-1\leq \delta<1$. Therefore, $2\delta-1>\delta-\frac{9}{4}$ and then 
\begin{equation*}
\begin{aligned}
\e\iint\re^{\frac{2\delta-1}{4}}\frac{|\dx\re|}{\sqrt{\re}}\,dxdt&\leq \e^{\frac{1}{4}}\left(\e^{3}\iint\re^{2\delta-1}\frac{|\dx\re|^4}{\re^2}\,dxdt\right)^{\frac{1}{4}}\\
&\leq \e^{\frac{1}{4}}\left(\e^{3}\iint\re^{\delta-\frac{9}{4}}\frac{|\dx\re|^4}{\re^2}\,dxdt\right)^{\frac{1}{4}}\leq C\e^{\frac{1}{4}}
\end{aligned}
\end{equation*}
where in the last line we used \eqref{eq:ub4}. Thus, by noting that 
\begin{equation}\label{eq:deltac}
\frac{\delta}{2}+\alpha-\frac{\theta}{2}=\frac{\beta+2-\theta}{2}\geq\frac{\alpha}{4},
\end{equation}
we can infer that as $\e\to 0$
\begin{equation*}
\re^{\frac{\delta}{2}}\mu'_{\e}(\re)\dx\re\to \frac{2\alpha}{\theta}\rho^{\frac{\beta+2-\theta}{2}}\dx\rho^{\frac{\theta}{2}}\mbox{ in }L^{1}(0,T;L^{1}(\T)).
\end{equation*}
Finally, noting that 
\begin{equation}\label{eq:l4}
\iint\re^{2\delta}[\mu'_{\e}(\re)]^4|\dx\re|^4\,dxdt=\iint\re^{\delta-2}[\mu'_{\e}(\re)]^3|\dx\re|^4\re^{\delta+2}\mu'_{\e}(\re)\,dxdt,
\end{equation}
by using that $\delta\geq-1$, \eqref{eq:viscosity}, \eqref{eq:ub1}, and \eqref{eq:ub3}, we have that 
$\{\re^{\frac{\delta}{2}}\mu'_{\e}(\re)\dx\re\}_{\e}\subset L^{4}(0,T;L^{4}(\T))$ uniformly in $\e$, and since $\rho^{\frac{\beta+2-\theta}{2}}\dx\rho^{\frac{\theta}{2}}\in L^{4}(0,T;L^{4}(\T))$ we obtain \eqref{eq:g1} by interpolating. Next, the proof of \eqref{eq:g2} is analogous to the of \eqref{eq:g1} after noting that 
$|\re\mu''_{\e}(\re)|\leq C_{\alpha}\mu'_{\e}(\re)$. Finally, we prove  \eqref{eq:g3}. We have that 
\begin{equation}\label{eq:last1}
\begin{aligned}
\re^{\delta+1}[\mu'_\varepsilon(\re)]^{2}\dx\re&=\frac{2\alpha^2}{\theta}\re^{\beta+2-\frac{\theta}{2}}\dx\re^{\frac{\theta}{2}}
+\frac{\alpha\e}{2}\re^{\delta+\alpha-\frac{3}{4}}\dx\re+\frac{\e^{2}}{16}\re^{\delta-\frac{1}{2}}\dx\re.
\end{aligned}
\end{equation}
Then, by using \eqref{eq:mainab} we have that $\beta+2-\frac{\theta}{2}>0$. Thus, \eqref{eq:gstrong} implies that 
\begin{equation}\label{eq:last}
\frac{2\alpha^2}{\theta}\re^{\beta+2-\frac{\theta}{2}}\dx\re^{\frac{\theta}{2}}\to\frac{2\alpha^2}{\theta}\rho^{\beta+2-\frac{\theta}{2}}\dx\rho^{\frac{\theta}{2}}\mbox{ in }L^{p}(0,T;L^{p}(\T))\,\, 1\leq p<4.
\end{equation}
Next, by H\"older inequality we get 
\begin{equation*}
\begin{aligned}
\e\iint\re^{\delta+\alpha-\frac{3}{4}}|\dx\re|\,dxdt\leq\e^{\frac{1}{4}}\left(\e^3\iint\re^{4\delta+4\alpha-3}|\dx\re|^4\,dxdt\right)^{\frac{1}{4}}
\end{aligned}
\end{equation*}
Then, by using \eqref{eq:mainab} we have that $4\delta+4\alpha-1>\delta-\frac{9}{4}$, and thus by using \eqref{eq:ub4} we can infer that 
\begin{equation}\label{eq:last2}
\e\iint\re^{\delta+\alpha-\frac{3}{4}}|\dx\re|\,dxdt\leq C\,\e^{\frac{1}{4}}.
\end{equation}
Finally, we consider the last term in \eqref{eq:last1}. By using \eqref{eq:ub3} and H\"older inequality we have 
\begin{equation*}
\begin{aligned}
\e^{2}\iint\re^{\delta-\frac{1}{2}}|\dx\re|\,dxdt&\leq \e\iint\re^{\delta-\frac{1}{4}}|\dx\re|\,dxdt\\
&\leq \e^{\frac{1}{4}}\left(\e^{3}\iint\re^{4\delta+1}\frac{|\dx\re|^4}{\re^{2}}\,dxdt\right)^{\frac{1}{4}}.
\end{aligned}
\end{equation*}
In particular, by \eqref{eq:mainab}, we have that $4\delta+1>\delta-\frac{9}{4}$, and thus by \eqref{eq:ub3} and \eqref{eq:ub4} we have 
\begin{equation}\label{eq:last3}
\e^{2}\iint\re^{\delta-\frac{1}{2}}|\dx\re|\,dxdt\leq C\,\e^{\frac{1}{4}}.
\end{equation}
Therefore, from \eqref{eq:last2} and \eqref{eq:last3} we have that 
\begin{equation*}
\left\|\re^{\delta+1}[\mu'_{\e}(\re)]^{2}\dx\re-\frac{2\alpha^2}{\theta}\rho^{\beta+2-\frac{\theta}{2}}\dx\rho^{\frac{\theta}{2}}\right\|_{L^{1}_{t,x}}\leq \frac{2\alpha^2}{\theta}\|\re^{\beta+2-\frac{\theta}{2}}\dx\re^{\frac{\theta}{2}}-\rho^{\beta+2-\frac{\theta}{2}}\dx\rho^{\frac{\theta}{2}}\|_{L^{1}_{t,x}}+C\,\e^{\frac{1}{4}}.
\end{equation*}
By using \eqref{eq:last} we obtain \eqref{eq:g3} for $p=1$. Finally, arguing as in \eqref{eq:l4}, we have that 
$\{\re^{\delta+1}[\mu'_{\e}(\re)]^{2}\dx\re\}_{\e}\subset L^{4}(0,T;L^{4}(\T))$ uniformly in $\e$, and since $\rho^{\beta+2-\frac{\theta}{2}}\dx\rho^{\frac{\theta}{2}}\in L^{4}(0,T;L^{4}(\T))$, by interpolating we obtain \eqref{eq:g3} for $1<p<4$.
\end{proof}
Finally, conclude with a proposition which will be the main tool in the proof of the main theorem.
\begin{proposition}\label{prop:trunc}
Let $f\in C^{1}\cap L^{\infty}(\R)$ and define 
\begin{equation*}
u:=
\begin{cases}
\frac{m}{\rho}\quad&\mbox{ on }\{\rho>0\},\\
0\quad&\mbox{ on }\{\rho=0\},
\end{cases}
\end{equation*}
Then, as $\e\to 0$, 
\begin{align}
&\re^{s}\,f(\ue)\to\rho^s\,f(u)\mbox{ in }L^{p}(0,T;L^{p}(\T)),\quad1\leq p\mbox{ and }s\geq 0,\label{eq:tr1}\\
&\mu_{\e}(\re)\,f(\ue)\to\rho^\alpha\,f(u)\mbox{ in }L^{p}(0,T;L^{p}(\T)),\quad1\leq p\label{eq:tr2}\\
&\re\ue\,f(\ue)\to\rho\,u\,f(u)\mbox{ in }L^{p}(0,T;L^{p}(\T)),\quad1\leq p<2,\label{eq:tr3}\\
&\re^{\frac{\delta}{2}}\mu'_{\e}(\re)\dx\re\,f(\ue)\to\frac{2\alpha}{\theta}\rho^{\frac{\beta+2-\theta}{2}}\dx\rho^{\frac{\theta}{2}}\,f(u)\mbox{ in }L^{p}(0,T;L^{p}(\T)),\quad1\leq p<4,\label{eq:tr4}\\
&\re^{\delta+1}[\mu'_{\e}(\re)]^2\dx\re\,f(\ue)\to\frac{2\alpha^2}{\theta}\rho^{\beta+2-\frac{\theta}{2}}\dx\rho^{\frac{\theta}{2}}\,f(u)\mbox{ in }L^{p}(0,T;L^{p}(\T)),\quad1\leq p<4,\label{eq:tr5}
\end{align}
\end{proposition}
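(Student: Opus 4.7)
The plan is to reduce each convergence to a combination of the strong convergences already established in Lemmas \ref{lem:rhoae}, \ref{lem:mom}, \ref{lem:gradients}, together with an a.e.\ convergence for $f(\ue)$. First, I would use that $\re\ue\to m$ strongly in $L^p_{t,x}$ for $p<2$ and $\re\to\rho$ a.e.; passing to a further subsequence, $\re\ue\to m$ a.e., and dividing on the set $\{\rho>0\}$ yields $\ue\to m/\rho=u$ a.e.\ there. By continuity of $f$ this gives $f(\ue)\to f(u)$ a.e.\ on $\{\rho>0\}$, while on $\{\rho=0\}$ we only retain the pointwise bound $|f(\ue)|\le\|f\|_\infty$.

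For each statement I would use the common decomposition
\[
A_\e\,f(\ue)-A\,f(u)\;=\;(A_\e-A)\,f(\ue)\;+\;A\,\bigl(f(\ue)-f(u)\bigr),
\]
with $A_\e$ the density-dependent prefactor and $A$ its limit supplied by the previous lemmas. The first piece is bounded pointwise by $\|f\|_\infty\,|A_\e-A|$ and converges to zero in the required $L^p$ by the cited convergences. The second piece is treated by dominated convergence: $|f(\ue)-f(u)|\le 2\|f\|_\infty$ and tends to zero a.e.\ on $\{\rho>0\}$, so it suffices to check that the limit $A$ vanishes on the vacuum set. This is immediate for $\rho^s$ when $s>0$; for \eqref{eq:tr2} the extra contribution $\e\re^{1/4}f(\ue)$ goes to zero in any $L^p$ thanks to \eqref{eq:ub3}; for \eqref{eq:tr4}--\eqref{eq:tr5} the prefactors $\rho^{(\beta+2-\theta)/2}\dx\rho^{\theta/2}$ and $\rho^{\beta+2-\theta/2}\dx\rho^{\theta/2}$ vanish on $\{\rho=0\}$ because, under \eqref{eq:mainab}, the exponents $(\beta+2-\theta)/2\ge\alpha/4$ and $\beta+2-\theta/2$ are strictly positive, exactly as already used in \eqref{eq:deltac} and in the proof of Lemma \ref{lem:gradients}.

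The case \eqref{eq:tr3} is identical in spirit, with the twist that $A_\e=\re\ue$ converges strongly only in $L^p$ for $p<2$; this suffices since $m=\rho u$ vanishes on $\{\rho=0\}$ by Lemma \ref{lem:mom} and $|m|\in L^\infty_tL^2_x$ dominates the second piece. For \eqref{eq:tr4}--\eqref{eq:tr5} the argument above directly yields $L^1_{t,x}$ convergence; interpolating against the uniform $L^4_{t,x}$ bound on $A_\e f(\ue)$ (which is inherited from the uniform $L^4_{t,x}$ bound on $A_\e$ already derived in the proof of Lemma \ref{lem:gradients} via \eqref{eq:l4}) upgrades the conclusion to any $p<4$.

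The main obstacle is that $\ue$ need not converge at all on the vacuum region, so $f(\ue)$ only carries an $L^\infty$ bound there; the argument hinges on the positivity of the exponents of $\rho$ in each limit prefactor $A$, which forces $A$ to vanish on $\{\rho=0\}$ and hence neutralizes the merely bounded factor $f(\ue)-f(u)$. Once this observation is made, each of \eqref{eq:tr1}--\eqref{eq:tr5} follows by combining a previously proven strong convergence with a routine dominated-convergence step, and, where needed, an interpolation with a uniform $L^4_{t,x}$ bound.
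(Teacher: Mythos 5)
Your argument is correct, and it rests on exactly the same two pillars as the paper's proof: the identification $\ue=\re\ue/\re\to m/\rho=u$ a.e.\ on $\{\rho>0\}$ (so $f(\ue)\to f(u)$ there), and the observation that every limit prefactor vanishes a.e.\ on the vacuum set because of the strictly positive powers of $\rho$ (via \eqref{eq:mainab}, \eqref{eq:deltac} and \eqref{eq:ub41} for \eqref{eq:tr4}--\eqref{eq:tr5}, via Lemma \ref{lem:mom} for \eqref{eq:tr3}), which neutralizes the merely bounded factor $f(\ue)$ where no convergence of $\ue$ is available. The closing mechanism differs: the paper shows a.e.\ convergence of the full products $g^{\e}_i f(\ue)\to g_i f(u)$ on all of $(0,T)\times\T$ and then invokes Vitali's convergence theorem, with uniform integrability supplied in one stroke by the bounds \eqref{eq:ub1}--\eqref{eq:ub4}; you instead split $A_\e f(\ue)-Af(u)=(A_\e-A)f(\ue)+A(f(\ue)-f(u))$, absorb the first piece into the strong $L^p$ convergences of Lemmas \ref{lem:rhoae}, \ref{lem:mom}, \ref{lem:gradients} (plus the trivial $\e\re^{1/4}$ remainder for \eqref{eq:tr2}), and treat the second by dominated convergence with the $\e$-independent dominant $2\|f\|_\infty|A|$. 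Your route avoids any explicit uniform-integrability check (the interpolation step you add for \eqref{eq:tr4}--\eqref{eq:tr5} is in fact redundant, since both pieces already converge in $L^p$ for $p<4$), while the paper's Vitali argument handles all five cases uniformly without a decomposition; the trade-off is purely one of bookkeeping. One shared blemish: for $s=0$ in \eqref{eq:tr1} the prefactor $\rho^{s}\equiv 1$ does not vanish on $\{\rho=0\}$, so neither your argument nor the paper's covers that endpoint --- harmless, since only strictly positive powers are used in the proof of the main theorem, but worth restricting to $s>0$ for precision.
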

\begin{proof}
For $i=1,...,5$ and $\e>0$ we define the functions 
\begin{equation*}
\begin{aligned}
&g^{\e}_{i}:=\begin{cases}\re^{s}\,&i=1\\ \mu_{\e}(\re)\,&i=2\\ \re\ue\,&i=3\\ \re^{\frac{\delta}{2}}\mu'_{\e}(\re)\dx\re\,&i=4\\ \re^{\delta+1}[\mu'_{\e}(\re)]^2\dx\re\,&i=5\end{cases}&\quad&
g_{i}:=\begin{cases}\rho^{s}\,&i=1\\ \rho^{\alpha}\,&i=2\\ \rho\,u\,&i=3\\ \frac{2\alpha}{\theta}\rho^{\frac{\beta+2-\theta}{2}}\dx\rho^{\frac{\theta}{2}}\,&i=4\\ \frac{2\alpha^2}{\theta}\rho^{\beta+2-\frac{\theta}{2}}\dx\rho^{\frac{\theta}{2}}\,&i=5\end{cases}
\end{aligned}
\end{equation*}
Be using Lemma \ref{lem:rhoae}, Lemma \ref{lem:mom}, and Lemma \ref{lem:gradients}, up to a subsequence if necessary, we have that for any $i=1,...,5$
\begin{equation*}
g_{i}^{\e}\to g_i\mbox{ a.e. on }(0,T)\times\T. 
\end{equation*}
Moreover, it also holds that $g_{i}=0$ on $\{\rho=0\}$ for any $i=1,...,5$. Indeed, this is obvious for $i=1,2$. For $i=3$ it is proved in Lemma \ref{lem:mom} and for $i=4,5$ it follows by \eqref{eq:ub41} and the fact that because of \eqref{eq:mainab}
\begin{equation*}
\begin{aligned}
&\beta+2-\theta>0,& &\beta+2-\frac{\theta}{2}>0.
\end{aligned}
\end{equation*}
Therefore, for any $i=1,...,5$ we have that a.e. on $\{\rho>0\}$ 
\begin{equation*}
g_{\e}^{i}\,f(\ue)\to\,g^{i}\,f(u)\mbox{ as }\e\to0
\end{equation*}
and a.e. on $\{\rho=0\}$ it holds that 
\begin{equation*}
|g^{i}_{\e}\,f(\ue)|\leq \|f\|_{\infty}|g^{i}_{\e}|\to 0\mbox{ as }\e\to0.
\end{equation*}
Then, 
\begin{equation*}
g_{\e}^{i}\,f(\ue)\to g^{i}\,f(u)\mbox{ a.e. on }(0,T)\times\T.
\end{equation*}
Finally, by using \eqref{eq:ub1}-\eqref{eq:ub4} and Vitali's convergence theorem we conclude. 
\end{proof}

\subsection{Proof of Theorem \ref{teo:main}}\mbox{}
\bigskip

We are now ready to prove the main result of the paper.
 
\begin{proof}[Proof of Theorem \ref{teo:main}]\mbox{}
\bigskip

We prove the result only for smooth initial data $\rho_0$ and $u_0$ with $\rho_0\geq c>0$ for some constant $c>0$. The case of initial data satisfying the the assumption \eqref{eq:idreal} follows by a simple approximation argument. Moreover, we prove only items (3) and (4) of Definition \ref{def:ws} since again the rest of the items follow by standard arguments. We start by introducing the truncations. Let $\sigma>0$ and $\{\beta_{\sigma}\}_{\sigma}$ be a sequence of function such that $\beta_{\sigma}:\R\mapsto \R$, $\beta_{\sigma}\in C^{2}(\R)$ and 

\begin{equation}\label{eq:beta}
|\beta_{\sigma}(t)|\leq \frac{1}{\sigma},\quad \beta_\sigma(t) \underset{\sigma \rightarrow 0}{ \longrightarrow t},  \quad \beta^{'}_\sigma(t) \underset{\sigma \rightarrow 0}{ \longrightarrow 1},  \quad| \beta^{'}_\sigma(t) | \le 1, \quad | \beta^{''}_\sigma (t) | \le C\sigma.
\end{equation}
with $C>0$ independent on $\sigma >0$. Next, since the initial data are smooth, by Theorem \ref{teo:appex}, there exists a unique regular solution $(\re,\ue)$ of the approximating system \eqref{eq:nska}-\eqref{eq:ida} with $\re\geq C_\e$. In particular, for any fized $\sigma>0$, the pair $(\re,\bede)$ satisfies
\begin{equation}\label{eq:truncated}
\begin{aligned}
\dt(\re\bede)+&\dx(\re\ue\bede)-\dx(\mu_{\e}(\re)\dx\ue\bedep)+\mu_{\e}(\re)\dx\ue\bedes+(\dx\re^{\gamma})\bedep\\
=&\frac{\re}{\alpha^2}\dx(\re^{\delta}\mu'_{\e}(\re)\dxx\mu_{\e}(\re))\bedep+\frac{\delta}{2\alpha^2}\re\dx(\re^{\delta-1}\dx\mu_{\e}(\re)\dx\mu_{\e}(\re))\bedep.
\end{aligned}
\end{equation}
Let $\psi\in C^{\infty}((0,T)\times\T)$, then by integrating by parts we get 
\begin{equation*}
\begin{aligned}
&\iint\re\dx(\re^{\delta}\mu'_{\e}(\re)\dxx\mu_{\e}(\re))\bedep\psi\,dxdt+\frac{\delta}{2}\iint\re\dx(\re^{\delta-1}\dx\mu_{\e}(\re)\dx\mu_{\e}(\re))\bedep\psi\,dxdt\\
=&\iint\re^{\delta+1}\mu'_{\e}(\re)\dx\mu_{\e}(\re)\bedep\dxx\psi\,dxdt
+\frac{(\delta+3)}{2}\iint\re^{\delta}\dx\mu_{\e}(\re)\dx\mu_{\e}(\re)\bedep\dx\psi\,dxdt\\
+&\iint\re^{\delta}(\re\mu''_{\e}(\re))\dx\re\dx\mu_{\e}(\re)\bedep\dx\psi\,dxdt
+\iint\re^{\delta+1}\mu'_{\e}(\re)\dx\mu_{\e}(\re)\bedes\dx\ue\dx\psi\,dxdt\\
-&\iint\re^{\delta+1}\mu'_{\e}(\re)\dxx\mu_{\e}(\re)\bedes\dx\ue\psi\,dxdt
+\left(\frac{1-\delta}{2}\right)\iint\re^{\delta}\dx\mu_{\e}(\re)\dx\mu_{\e}(\re)\bedes\dx\ue\psi\,dxdt.
\end{aligned}
\end{equation*}
Thus, multiplying \eqref{eq:truncated} by $\psi$, integrating by parts, we obtain
\begin{equation}\label{eq:final}
\begin{aligned}
&\iint\,\re\bede\dt\psi+\re\ue\bede\dx\psi\,dxdt-\mu_{\e}(\re)\dx\ue\bedep\dx\psi+\re^{\gamma}\bedep\dx\psi\,dxdt\\
+\frac{1}{\alpha^2}&\iint\re^{\delta+1}\mu'_{\e}(\re)\dx\mu_{\e}(\re)\bedep\dxx\psi\,dxdt
+\frac{(\delta+3)}{2\alpha^2}\iint\re^{\delta}\dx\mu_{\e}(\re)\dx\mu_{\e}(\re)\bedep\dx\psi\,dxdt\\
+&\frac{1}{\alpha^2}\iint\re^{\delta}(\re\mu''_{\e}(\re))\dx\re\dx\mu_{\e}(\re)\bedep\dx\psi\,dxdt=\frac{1}{\alpha^2}(\langle R_{\sigma,\psi},\psi\rangle+\langle R_{\sigma,\dx\psi},\dx\psi\rangle)
\end{aligned}
\end{equation}
where 
\begin{equation}\label{eq:remainder}
\begin{aligned}
R^{\e}_{\sigma,\psi}=&-\mu_{\e}(\re)\dx\ue\bedes+\re^{\delta+1}\mu'_{\e}(\re)\dxx\mu_{\e}(\re)\bedes\dx\ue\\
&-\left(\frac{1-\delta}{2}\right)\re^{\delta}\dx\mu_{\e}(\re)\dx\mu_{\e}(\re)\bedes\dx\ue-\re^{\gamma}\bedes\dx\ue\\
&=R^{\e,1}_{\sigma,\psi}+R^{\e,2}_{\sigma,\psi}+R^{\e,3}_{\sigma,\psi}+R^{\e,4}_{\sigma,\psi}\\
R^{\e}_{\sigma,\dx\psi}=&-\re^{\delta+1}\mu'_{\e}(\re)\dx\mu_{\e}(\re)\bedes\dx\ue
\end{aligned}
\end{equation}
The first step it study the limit as $\e\to0$ with $\sigma$ fixed. In particular, by using Proposition \ref{prop:trunc}, Lemma \ref{lem:mom}, and Lemma \ref{lem:gradients}, we obtain that as $\e\to0$ the left-hand side of \eqref{eq:final} converge to 
\begin{equation}\label{eq:final2}
\begin{aligned}
&\iint\,\rho\bed\dt\psi+\rho\,u\bed\dx\psi-\rho^{\frac{\alpha}{2}}\mathcal{T}\bedp\dx\psi+\rho^{\gamma}\bedp\dx\psi\,dxdt\\
&+\kappa_1\iint\rho^{\beta+2-\frac{\theta}{2}}\dx\rho^{\frac{\theta}{2}}\bedp\dxx\psi\,dxdt
+\kappa_2\iint\rho^{\beta+2-\theta}|\dx\rho^{\frac{\theta}{2}}|^2\bedp\dx\psi\,dxdt
\end{aligned}
\end{equation}
where $k_1$ and $k_2$ are the constants in \eqref{eq:kweak}. Next, we study the remainder $R_{\sigma,\psi}$. In particular, we claim that there exists a constant $C>0$ independent of $\sigma$ and $\e$ such that 
\begin{equation}\label{eq:remainder}
\begin{aligned}
\|R^{\e}_{\sigma,\psi}\|_{L^{1}_{t,x}}&\leq C\sigma\\
\|R^{\e}_{\sigma,\dx\psi}\|_{L^{1}_{t,x}}&\leq C\sigma.
\end{aligned}
\end{equation}
We start with $R^{\e}_{\sigma,\psi}$. By using \eqref{eq:ub1}, \eqref{eq:ub3}, and \eqref{eq:beta} we have that 
\begin{equation*}
\|R^{\e,1}_{\sigma,\psi}\|_{L^{1}_{t,x}}\leq C\|[\mu(\re)]^{\frac{1}{2}}\|_{L^{2}_t(L^{2}_x)}\|[\mu(\re)]^{\frac{1}{2}}\dx\,u\|_{L^{2}_t(L^{2}_x)}\|\bedes\|_{L^{\infty}_{t,x}}\leq C\sigma
\end{equation*}
By using \eqref{eq:ub1}, \eqref{eq:ub3}, and \eqref{eq:beta} and the fact that $\re\mu'_{\e}(\re)\leq C\mu_{\re}(\re)$, we have that 
\begin{equation}\label{eq:restriction}
\begin{aligned}
\|R^{\e,2}_{\sigma,\psi}\|_{L^{1}_{t,x}}&\leq C\iint\re^{\delta+1}\mu'_{\e}(\re)|\dxx\mu_{\e}(\re)||\bedes||\dx\ue|\,dxdt\\
&\leq \iint\re^{\frac{\delta+1}{2}}\,\re^{\frac{\delta}{2}}[\mu'_{\e}(\re)]^{\frac{1}{2}}|\dxx\mu_{\e}(\re)|[\re\mu'_{\e}(\re)]|^{\frac{1}{2}}|\dx\ue||\bedes|\,dxdt\\
&\leq C\|\re^{\frac{\delta+1}{2}}\|_{L^{\infty}_{t,x}}\|\re^{\frac{\delta}{2}}[\mu'_{\e}(\re)]^{\frac{1}{2}}\dxx\mu_{\e}(\re)\|_{L^{2}_{t,x}}\|[\mu_{\e}(\re)]^{\frac{1}{2}}\dx\ue\|_{L^{2}_{t,x}}\|\bedes\|_{L^{\infty}_{t,x}}\\
&\leq C\sigma
\end{aligned}
\end{equation}
where in the last line we have used the $\delta\geq -1$. Next, we consider $R^{\e,3}_{\sigma,\psi}$. By using \eqref{eq:ub1}, \eqref{eq:ub3}, \eqref{eq:beta}, and he fact that $\re\mu'_{\e}(\re)\leq C\mu_{\re}(\re)$ we have that 
\begin{equation}\label{eq:restriction1}
\begin{aligned}
\|R^{\e,3}_{\sigma,\psi}\|_{L^{1}_{t,x}}&\leq C\iint\re^{\delta}|\dx\mu_{\e}(\re)|^2|\bedes||\dx\ue|\,dxdt\\
&\leq C\iint\re^{\frac{\delta+1}{2}}\,\re^{\frac{\delta-2}{2}}\frac{|\dx\mu_{\e}(\re)|^2}{[\mu'_{\e}(\re)]^{\frac{1}{2}}}\,[\re\mu'_{\e}(\re)]^{\frac{1}{2}}|\dx\ue|\,dxdt\\
&\leq C \|\re^{\frac{\delta+1}{2}}\|_{L^{\infty}_{t,x}}\|\re^{\frac{\delta-2}{4}}[\mu'_{\e}(\re)]^{\frac{3}{4}}\dx\re\|_{L^{4}_{t,x}}\|[\mu_{\e}(\re)]^{\frac{1}{2}}\dx\ue\|_{L^{2}_{t,x}}\|\bedes\|_{L^{\infty}_{t,x}}\\
&\leq C\sigma.
\end{aligned}
\end{equation}
Note that in the last line we used again that $\delta\geq -1$. Finally, we consider $R^{\e,4}_{\sigma,\psi}$. By using that $2\gamma>\alpha$, \eqref{eq:ub2}, \eqref{eq:ub3}, and \eqref{eq:beta} we have that 
\begin{equation*}
\begin{aligned}
\|R^{\e,4}_{\sigma,\psi}\|_{L^{1}_{t,x}}&\leq \iint\re^{\gamma-\frac{\alpha}{2}}|\bedes|\re^{\frac{\alpha}{2}}|\dx\ue|\,dxdt\\
&\leq \|\re^{\gamma-\frac{\alpha}{2}}\|_{L^{\infty}_{t,x}}\|\bedes\|_{L^{\infty}_{t,x}}\|\re^{\frac{\alpha}{2}}\dx\ue\|_{L^{2}_{t,x}}\\
&\leq C\sigma.
\end{aligned}
\end{equation*}
Next, concerning $R^{\e}_{\sigma,\dx\psi}$, by using that $\re\mu'_{\e}(\re)\leq C\mu_{\re}(\re)$ we obtain
\begin{equation*}
\begin{aligned}
|R^{\e}_{\sigma,\dx\psi}|&=|\re^{\delta+1}\mu'_{\e}(\re)\dx\mu_{\e}(\re)\bedes\dx\ue|\\
&\leq C\,\re^{\delta+2\alpha-1}|\dx\re||\bedes||\dx\ue|+C\,\e^{2}\re^{\delta-\frac{1}{2}}|\dx\re||\bedes||\dx\ue|\\
&=:R^{\e,1}_{\sigma,\dx\psi}+R^{\e,2}_{\sigma,\dx\psi}.
\end{aligned}
\end{equation*}
Recalling the definition of $\delta$ and $\theta$ we have that 
\begin{equation*}
\|R^{\e,1}_{\sigma,\dx\psi}\|_{L^{1}_{t,x}}\leq C\iint\re^{\frac{\beta-2\alpha+3}{2}}|\dx\re^{\theta}|\re^{\frac{\alpha}{2}}|\dx\ue||\bedes|\,dxdt.
\end{equation*}
We note that 
By using \eqref{eq:ub41} and Poicar\'e inequality we can infer that for some $C>0$ independent of $\e$, it holds that
\begin{equation*}
\iint|\dx\re^{\theta}|^2\,dxdt\leq C.
\end{equation*}
Thus, by using also \eqref{eq:ub2} and \eqref{eq:beta} we have that 
\begin{equation*}
\begin{aligned}
\|R^{\e,1}_{\sigma,\dx\psi}\|_{L^{1}_{t,x}}&\leq C\|\re^{\frac{\beta-2\alpha+3}{2}}\|_{L^{\infty}_{t,x}}\|\dx\re^{\theta}\|_{L^{2}_{t,x}}
\|\re^{\frac{\alpha}{2}}\dx\ue\|_{L^{2}_{t,x}}\|\bedes\|_{L^{\infty}_{t,x}}\\
&\leq C\,\sigma,
\end{aligned}
\end{equation*}
where in the last line we used \eqref{eq:mainab}. Concerning $R^{\e,2}_{\sigma,\dx\psi}$ 
have that 
\begin{equation*}
\begin{aligned}
\|R^{\e,2}_{\sigma,\dx\psi}\|_{L^{1}_{t,x}}&\leq C\e^{2}\iint\,\re^{\frac{4\delta+4}{8}}|\dx\re^{\frac{4\delta-1}{8}}|\re^{\frac{1}{8}}|\dx\ue||\bedes|\,dxdt.\\
&\leq \|\re^{\frac{4\delta+4}{8}}\|_{L^{\infty}_{t,x}}\|\bedes\|_{L^{\infty}_{t,x}}\left(\e^{3}\iint|\dx\re^{\frac{4\delta-1}{8}}|^2\,dxdt\right)^{\frac{1}{2}}
\left(\e\iint\re^{\frac{1}{4}}|\dx\ue|^2\,dxdt\right)^{\frac{1}{2}}
\end{aligned}
\end{equation*}
By using \eqref{eq:ub4} and Poicar\'e inequality, \eqref{eq:ub1} together with \eqref{eq:viscosity}, and \eqref{eq:beta} with can infer that 
\begin{equation*}
\|R^{\e,2}_{\sigma,\dx\psi}\|_{L^{1}_{t,x}}\leq C\,\sigma.
\end{equation*}
Therefore, \eqref{eq:remainder}, and then by weak compactness we can infer that there exist measures $\mathcal{R}_{1,\sigma}$ and $\mathcal{R}_{2,\sigma}$ such that as $\e\to 0$
\begin{align*}
&R^{\e}_{\sigma,\psi}\weaktos\,\mathcal{R}_{1,\sigma}\\
&R^{\e}_{\sigma,\dx\psi}\weaktos\,\mathcal{R}_{2,\sigma}
\end{align*}
and their total variations satisfy 
\begin{equation}\label{eq:total}
|\mathcal{R}_{1,\sigma}|(\T)+|\mathcal{R}_{2,\sigma}|(\T)\leq C\,\sigma. 
\end{equation}
Then, we can conlude that as $\e\to 0$, for any $\sigma>0$ it holds that 
\begin{equation}\label{eq:final}
\begin{aligned}
&\iint\,\rho\bed\dt\psi+\rho\,u\bed\dx\psi-\rho^{\frac{\alpha}{2}}\mathcal{T}\bedp\dx\psi+\rho^{\gamma}\bedp\dx\psi\\
&+\kappa_1\iint\rho^{\beta+2-\frac{\theta}{2}}\dx\rho^{\frac{\theta}{2}}\bedp\dxx\psi\,dxdt+\kappa_2\iint\rho^{\beta+2-\theta}|\dx\rho^{\frac{\theta}{2}}|^2\bedp\dx\psi\\
&=\langle\mathcal{R}_{1,\sigma},\psi\rangle+\langle\mathcal{R}_{2,\sigma},\dx\psi\rangle.
\end{aligned}
\end{equation}
Next, we send $\sigma\to 0$. Note that by \eqref{eq:total} the right-hand side of \eqref{eq:final} goes to zero. Then, by using the dominated convergence theorem we obtain that 
\begin{equation*}
\iint\rho\,u\dt\psi+\rho\,u\,u\dx\psi-\rho^{\frac{\alpha}{2}}\mathcal{T}\dx\psi+\rho^{\gamma}\dx\psi+\kappa_{1}\rho^{\beta+2-\frac{\theta}{2}}\dx\rho^{\frac{\theta}{2}}\dxx\psi+\kappa_{2}\rho^{\beta+2-\theta}|\dx\rho^{\frac{\theta}{2}}|^2\dx\psi\,dxdt=0,
\end{equation*}
where $\kappa_1$ and $\kappa_2$ are constants in \eqref{eq:kweak}.\\

Next, we prove item (4). Let $\phi_{M}$ be the sequence of smooth functions defined in \eqref{eq:phim}. Note that by a direct calculation it holds that for any $a>1$ 
\begin{equation}\label{eq:phim1}
\rho^{a}|\phi'_{M}(\rho)|\leq \frac{C}{M^{a-1}}. 
\end{equation}
Moreover, for $\sigma>0$, let $\hat{\beta}_{\sigma}:\R\to\R$ be a sequence of functions such that 
\begin{equation}\label{eq:hb1}
\hat{\beta}_{\sigma}\to 1\mbox{ a.e. },
\end{equation}
and for some $C>0$ independent on $\sigma$, it holds that
\begin{equation}\label{eq:hb2}
|y||\hat{\beta}_{\sigma}(y)|\leq \frac{C}{\sigma};\quad|\hat{\beta}_{\sigma}(y)|\leq 1;\quad|\hat{\beta}'_{\sigma}(y)|\leq\,C\sigma.
\end{equation}
Let $\psi\in C^{\infty}_{c}((0,T)\times\T)$ be a test function. Then, since $(\re,\ue)$ is smooth we have that 
\begin{equation}\label{eq:tau1}
\begin{aligned}
\iint\mu_{\e}(\re)\dx\ue\phi_{M}(\re)\hat{\beta}_{\sigma}(\ue)\psi\,dxdt=&-\iint\mu_{\e}(\re)\ue\phi_{M}(\re)\hat{\beta}_{\sigma}(\ue)\dx\psi\,dxdt\\
&-\iint\mu_{\e}'(\re)\dx\re\ue\phi_{M}(\re)\hat{\beta}_{\sigma}(\ue)\psi\,dxdt\\
&+\langle\bar{\mathcal{R}}_{\sigma,M,\psi}^{\e},\psi\rangle,
\end{aligned}
\end{equation}
where 
\begin{equation}\label{eq:tau2}
\begin{aligned}
\bar{\mathcal{R}}_{\sigma,M,\psi}^{\e}&=-\mu_{\e}(\re)\ue\phi'_{M}(\re)\dx\re\hat{\beta}_{\sigma}(\ue)\psi-\mu_{\e}(\re)\ue\phi_{M}(\re)\hat{\beta}'_{\sigma}(\ue)\dx\ue\psi\\
&=\bar{\mathcal{R}}_{\sigma,M,\psi}^{\e,1}+\bar{\mathcal{R}}_{\sigma,M,\psi}^{\e,2}.
\end{aligned}
\end{equation}
Let $M$ and $\sigma$ be fixed, then by using \eqref{eq:tauconv}, \eqref{eq:hb2} and Proposition \ref{prop:trunc} we can infer that 
\begin{equation*}
\begin{aligned}
&\iint\mu_{\e}(\re)\dx\ue\phi_{M}(\re)\hat{\beta}_{\sigma}(\ue)\psi\,dxdt\to \iint\rho^{\frac{\alpha}{2}}\mathcal{T}\phi_{M}(\rho)\hat{\beta}_{\sigma}(u)\psi\,dxdt,\\
&\iint\mu_{\e}(\re)\ue\phi_{M}(\re)\hat{\beta}_{\sigma}(\ue)\dx\psi\,dxdt\to \iint\rho^{\alpha-\frac{1}{2}}\Lambda\phi_{M}(\rho)\hat{\beta}_{\sigma}(u)\dx\psi\,dxdt.
\end{aligned}
\end{equation*}
Moreover, by \eqref{eq:viscosity} we have that 
\begin{equation*}
\begin{aligned}
\iint\mu_{\e}'(\re)\dx\re\ue\phi_{M}(\re)\hat{\beta}_{\sigma}(\ue)\psi\,dxdt&=\frac{2\alpha}{2\alpha-1}\iint\dx\re^{\alpha-\frac{1}{2}}\re^{\frac{1}{2}}\ue\phi_{M}(\re)\hat{\beta}_{\sigma}(\ue)\psi\,dxdt\\
&+\frac{\e}{4}\iint\re^{-\frac{3}{4}}\dx\re\ue\phi_{M}(\re)\hat{\beta}_{\sigma}(\ue)\psi\,dxdt.
\end{aligned}
\end{equation*}
By using \eqref{eq:hb2}, the definition of $\phi_{M}$ and  Proposition \ref{prop:trunc} we have that as $\e\to 0$ it holds that 
\begin{equation*}
\re^{\frac{1}{2}}\ue\phi_{M}(\re)\hat{\beta}_{\sigma}(\ue)\to 
\phi_{M}(\rho)\Lambda\hat{\beta}_{\sigma}(u)\mbox{ in }L^{2}(0,T;L^{2}(\T)).
\end{equation*}
Since, it also holds that 
\begin{equation*}
\dx\re^{\alpha-\frac{1}{2}}\weakto\dx\rho^{\alpha-\frac{1}{2}}\mbox{ in }L^{2}(0,T;L^{2}(\T)),
\end{equation*}
we can conclude that as $\e\to 0$
\begin{equation*}
\frac{2\alpha}{2\alpha-1}\iint\dx\re^{\alpha-\frac{1}{2}}\re^{\frac{1}{2}}\ue\phi_{M}(\re)\hat{\beta}_{\sigma}(\ue)\psi\,dxdt
\to \frac{2\alpha}{2\alpha-1}\iint\dx\rho^{\alpha-\frac{1}{2}}\Lambda\phi_{M}(\rho)\hat{\beta}_{\sigma}(u)\psi\,dxdt. 
\end{equation*}
Next, by using the definition of $\phi_M$ and \eqref{eq:hb2} we have that 
\begin{equation*}
\begin{aligned}
\frac{\e}{4}\iint\re^{-\frac{3}{4}}\dx\re\ue\phi_{M}(\re)\hat{\beta}_{\sigma}(\ue)\psi\,dxdt&\lesssim \e\iint \re^{\frac{3}{4}-\alpha}\dx\re^{\alpha-\frac{1}{2}}\,\ue\phi_{M}(\re)\hat{\beta}_{\sigma}(\ue)\,\psi\,dxdt\\
&\lesssim\e\frac{M^{\alpha-\frac{3}{4}}}{\delta}\iint|\dx\re^{\alpha-\frac{1}{2}}|\,dxdt\lesssim \e.
\end{aligned}
\end{equation*}
In conclusion, as $\e\to 0$, we get that 
\begin{equation*}
\iint\mu_{\e}'(\re)\dx\re\ue\phi_{M}(\re)\hat{\beta}_{\sigma}(\ue)\psi\,dxdt\to \frac{2\alpha}{2\alpha-1}\iint\dx\re^{\alpha-\frac{1}{2}}\re^{\frac{1}{2}}\ue\phi_{M}(\re)\hat{\beta}_{\sigma}(\ue)\psi\,dxdt.
\end{equation*}
Finally, we study the remainders. We start with $\bar{\mathcal{R}}_{\sigma,M,\psi}^{\e,1}$. By using \eqref{eq:phim}, \eqref{eq:hb2}, and \eqref{eq:ub1} we have that 
\begin{equation*}
\begin{aligned}
\|\bar{\mathcal{R}}_{\sigma,M,\psi}^{\e,1}\|_{L^{1}_{t,x}}
&\leq \iint\re^{\frac{1}{2}}|\phi'_M(\re)||\ue||\hat{\beta}_{\sigma}(\ue)|\,dxdt\\
&+\frac{\e}{4}\iint\re^{\frac{3}{4}-\alpha}|\phi'_M(\rho)||\dx\re^{\alpha-\frac{1}{2}}||\hat{\beta}_{\sigma}(\ue)|\,dxdt\\
&\lesssim \frac{1}{\sigma\,\sqrt{M}}+\e\frac{M^{\alpha-\frac{3}{4}}}{\sigma}.
\end{aligned}
\end{equation*}
Concerning $\bar{\mathcal{R}}_{\sigma,M,\psi}^{\e,2}$ we distinguish two cases, if $\alpha<1$, we have
\begin{equation*}
\begin{aligned}
\|\bar{\mathcal{R}}_{\sigma,M,\psi}^{\e,2}\|_{L^{1}_{t,x}}&\lesssim\iint\re^{\frac{\alpha-1}{2}}\re^{\frac{1}{2}}|\ue|\phi_M(\re)\hat{\beta}_{\sigma}'(\ue)\re^{\frac{\alpha}{2}}|\dx\ue||\psi|\,dxdt\\
&+\e\iint\re^{-\frac{\alpha+3}{4}}\re^{\frac{1}{2}}|\ue|\phi_M(\re)\hat{\beta}_{\sigma}'(\ue)\re^{\frac{\alpha}{2}}|\dx\ue||\psi|\,dxdt\\
&\lesssim M^{\frac{1-\alpha}{2}}\sigma+\e\sigma\,M^{\frac{\alpha+3}{4}},
\end{aligned}
\end{equation*}
while if $\alpha\geq 1$ we have 
\begin{equation*}
\|\bar{\mathcal{R}}_{\sigma,M,\psi}^{\e,2}|\_{L^{1}_{t,x}}\lesssim \sigma+\e\sigma\,M^{\frac{\alpha+3}{4}}.
\end{equation*}
Clearly, the worst case is $\alpha<1$ and thus we only consider this case. By sending $\e\to0$ we have that there exists a measure $\bar{\mathcal{R}}_{\sigma,M,\psi}$ such that 
\begin{equation*}
\bar{\mathcal{R}}_{\sigma,M,\psi}^{\e}\weaktos 
\bar{\mathcal{R}}_{\sigma,M,\psi}
\end{equation*}
and its total variation satisfies 
\begin{equation}\label{eq:varia1}
|\bar{\mathcal{R}}_{\sigma,M,\psi}|(\T)\lesssim 
\frac{1}{\sigma\sqrt{M}}+M^{\frac{1-\alpha}{2}}\sigma
\end{equation}
Thus, as $\e\to 0$, we obtain that 
\begin{equation*}
\begin{aligned}
\iint\rho^{\frac{\alpha}{2}}\mathcal{T}\phi_M(\rho)\hat{\beta}_{\sigma}(u)\psi\,dxdt&=-\iint\rho^{\alpha-\frac{1}{2}}\Lambda\phi_{M}(\rho)\hat{\beta}_{\sigma}(u)\,\psi\,dxdt\\
&-\frac{2\alpha}{2\alpha-1}\iint\dx\rho^{\alpha-\frac{1}{2}}\Lambda\phi_{M}(\rho)\hat{\beta}_{\sigma}(u)\psi\,dxdt\\
&+\langle\bar{\mathcal{R}}_{\sigma,M,\psi},\psi\rangle. 
\end{aligned}
\end{equation*}
Choosing $\sigma_M=\frac{1}{M^{\frac{1}{4}}}$, by using \eqref{eq:varia1}, we get that as $M\to\infty$
\begin{equation*}
\langle\bar{\mathcal{R}}_{\sigma_M,M,\psi},\psi\rangle\to 0,
\end{equation*}
while by using the dominated convergence theorem we obtain (4). 
\end{proof}
\appendix
\section{Proof of Theorem \ref{teo:appex}}\label{app}
To avoid heavy notations we omit the subscript $\e$. Let $\mu(\rho)$ and $k(\rho)$ given by \eqref{eq:viscosity} and \eqref{eq:capillarity} and consider the system 
\begin{align}
\dt\rho+\dx(\rho\,u)&=0,\nonumber\\
\dt(\rho\,u)+\dx(\rho\,u\,u)&-\dx(\mu(\rho)\dx u)+\dx\rho^{\gamma}=\rho\,\dx\!\left(\dx(k(\rho)\dx\rho))\!-\!\frac{k'(\rho)}{2}|\dx\rho|^2\right).\label{eq:nskapp}
\end{align}
Given $(\rho_0,u_0)\in H^{k+1}(\T)\times H^{k}(\T)$, with $k>2$ and $\rho_0$ strictly positive, by using a standard contraction argument, there exists $T^{*}>0$ and a unique local solution 
\begin{equation*}
(\rho,u)\in C([0,T^{*});H^{k+1}(\T))\times C([0,T^{*});H^{k}(\T)) 
\end{equation*}
of \eqref{eq:nskapp}. We note that \eqref{eq:nskapp} can equivalently formulated as 
\begin{align}
&\dt A+\dx(\tilde{\mu}(\rho)\dx u)=-u\dx\,A-\dx\,uA\nonumber\\
\dt u-&\dx(\bar{\mu}(\rho)\dx u)-\dx(\tilde{\mu}(\rho)\dx A)=f_1(\rho)A\label{eq:sysa}\\
&+f_{2}(\rho)A\dx u+\dx(f_{3}(\rho)|A|^{2})-u\dx\,u\nonumber
\end{align}
where 
\begin{equation*}
A(\rho)=\sqrt{\frac{k(\rho)}{\rho}}\,\dx\rho,\quad \tilde{\mu}(\rho)=\sqrt{\rho\,k(\rho)},\quad \bar{\mu}(\rho)=\frac{\mu(\rho)}{\rho},
\end{equation*}
and the $f_{i}$ are suitable smooth functions depending on $\rho$, which can be explicitly computed. Performing an $H^{k}$ energy estimate we can infer that there exists $\Psi: \R^{3}\mapsto \R^{+}$, smooth and monotone increasing in every argument
such that 
\begin{equation}\label{eq:hs}
\begin{aligned}
&\frac{d}{dt}\left(\|A(t)\|_{H^k_x}+\|u(t)\|_{H^k_x}\right)\\
&\leq \left(\Psi\left(\big\|\frac{1}{\rho(t)}\big\|_{\infty}, \|\dx A(t)\|_{L_x^\infty}, \|\dx u(t)\|_{L_x^\infty}\right)+\|\dx^{2}u(t)\|_{L_x^\infty}\right)\left(\|A(t)\|_{H^k_x}+\|u(t)\|_{H^k_x}\right).
\end{aligned}
\end{equation}
We stress that the skew-symmetric structure in \eqref{eq:sysa} is crucial to deduce the previous higher-order energy estimate. Moreover, \eqref{eq:hs} also provides the following blow-up alternative: 
\begin{equation}\label{eq:chartime}
T^{*}<\infty\Leftrightarrow \limsup_{T\to T^{*}}\int_{0}^{T}\left(\Psi\left(\big\|\frac{1}{\rho(t)}\big\|_{L_x^\infty}, \|\dx A(t)\|_{L_x^\infty}, \|\dx u(t)\|_{L_x^\infty}\right)+\|\dx^{2}u(t)\|_{L_x^\infty}\right)\,dt=\infty. 
\end{equation}
Next, we claim that by a bootstrap argument the local solution $(\rho,u)$ can be extended globally in time. In order to do so, we start by noticing that Proposition \ref{prop:energy}, Proposition \ref{prop:bdentropy}, and Proposition \ref{prop:vacuum} yield the following global bounds: 
\begin{equation}\label{eq:e1}
\begin{aligned}
&\rho,\,\rho^{-1}\in L^{\infty}([0,T]\times\T),\\
&u\in L^{\infty}(0,T;L^{2}(\T))\cap L^{2}(0,T;H^{1}(\T)),\\
&A\in L^{\infty}(0,T;L^{2}(\T))\cap L^{2}(0,T;H^{1}(\T)).
\end{aligned}
\end{equation}
The equations for $(\dx\,u,\dx\,A)$ are 
\begin{align}
&\dt\dx A+\dx^2(\tilde{\mu}(\rho)\dx\,u)=-\dx(u\dx A)-\dx(\dx u\,A),
\nonumber\\
\dt\,\dx u&-\dx^{2}(\bar{\mu}(\rho)\dx u)-\dx^2(\tilde{\mu}(\rho)\dx A)=\dx(f_{1}(\rho)\,A)\label{eq:sysa1}\\
&+\dx(f_2(\rho)A\dx u)+\dx^2(f_{3}(\rho)|A|^{2})-\dx(u\dx u).\nonumber
\end{align}
Then, by a classical energy estimate, we have that 
\begin{equation*}
\frac{d}{dt}(\|\dx A\|_{L^2_x}^2+\|\dx u\|_{L^2_x}^2)+\|\dx^2 u\|_{L^2_x}^{2}\lesssim (1+\|\dx A\|_{L^2_x}^2+\|\dx u\|_{L^2_x}^{2}+\|A\|_{L^2_x}\|\dx A\|_{L^2_x}^2)(\|\dx A\|_{L^2_x}^2+\|\dx u\|_{L^2_x}^2),
\end{equation*}
and by using \eqref{eq:e1} and Gr\"onwall's Lemma we obtain that 
\begin{equation}\label{eq:e2}
\begin{aligned}
&A\in L^{\infty}(0,T;H^{1}(\T)),\\
&u\in L^{\infty}(0,T;H^{1}(\T))\cap L^{2}(0,T;H^{2}(\T)).
\end{aligned}
\end{equation}
The next step is to deduce that $A\in L^{2}(0,T;H^{2}(\T))$. We first note that by the first equation in \eqref{eq:sysa} we easily deduce that 
\begin{equation*}
\|\dt A\|_{L^2_x}^{2}\lesssim \|\dx u\|^2_{L^2_x}\|\dx A\|^2_{L^2_x}
+\|\dx^2\,u\|_{L^2_x}^2+\|\dx u\|_{L^2_x}^2. 
\end{equation*}
Therefore, 
\begin{equation}\label{eq:e3}
\dt A\in L^{2}(0,T;L^{2}(\T)). 
\end{equation}
Then, by using the second equation in \eqref{eq:sysa} we obtain 
\begin{equation}\label{eq:s4}
\int_0^T\|\dx^2 A\|_{L^2_x}^2\,dt\lesssim (1+T)\left(\sup_{t}(\|\dx u\|_{L^2_x}^2
+\|\dx A\|_{L^2_x}^2)\right)+\int_0^T\|\dt A\|_{L^2_x}^{2}\,dt
+\int_0^T\|\dx^2 u\|_{L^2_x}^{2}\,dt.
\end{equation}
Thus, using \eqref{eq:e1}, \eqref{eq:e2}, and \eqref{eq:e3} we obtain
\begin{equation}\label{eq:e4}
A\in L^{2}(0,T;H^{2}(\T)). 
\end{equation}
The last step of the bootstrap argument consists in proving the uniform in time $H^{2}$ regularity. The equations for $(\dx^2 u,\dx^2 A)$ are 
\begin{align}
&\dt\dx^2 A+\dx^3(\tilde{\mu}(\rho)\dx\,u)=-\dx^2(u\dx A)-\dx^2(\dx u\,A),
\nonumber\\
\dt\,\dx^2 u&-\dx^{3}(\bar{\mu}(\rho)\dx u)-\dx^3(\tilde{\mu}(\rho)\dx A)=\dx^2(f_{1}(\rho)\,A)\label{eq:sysa2}\\
&+\dx^2(f_2(\rho)A\dx u)+\dx^3(f_{3}(\rho)|A|^{2})-\dx^2(u\dx u).\nonumber
\end{align}
Then, by performing an $L^2$ energy estimate for the system \eqref{eq:sysa2} we have 
\begin{equation*}
\frac{d}{dt}(\|\dx^2 u\|_{L^2_x}^2+\|\dx^2 A\|_{L^2_x}^2)+\|\dx^3 u\|_{L^2_x}^2\lesssim (1+\|\dx u\|_{L^2_x}^2+\|\dx A\|_{L^2_x}^2+\|\dx^2 u\|_{L^2_x}^2+\|\dx^2 A\|_{L^2_x}^2)(\|\dx^2 u\|_{L^2_x}^2+\|\dx^2 A\|_{L^2_x}^2). 
\end{equation*}
By using \eqref{eq:e1}-\eqref{eq:e4} and Gr\"onwall's Lemma we conclude that 
\begin{equation}\label{eq:5}
\begin{aligned}
&A\in L^{\infty}(0,T;H^{2}(\T),\\
&u\in L^{\infty}(0,T;H^{2}(\T))\cap L^{2}(0,T;H^{3}(\T)). 
\end{aligned}
\end{equation}
Finally, by using \eqref{eq:5} and Sobolev embedding we have that 
\begin{equation*}
\dx u,\dx A\in L^{\infty}([0,T]\times\T),\quad \dx^2 u\in L^{2}(0,T;L^{\infty}(\T))
\end{equation*}
and therefore by the blow-up alternative \eqref{eq:chartime} we conclude.

\end{document}